\definecolor{lightgrey}{rgb}{.804,.804,.756}
\definecolor{myviolet}{rgb}{.45,.05,.545}
\definecolor{myred}{rgb}{.545,0,0}
\definecolor{myblue}{rgb}{.024,.15,.645}
\definecolor{mydarkblue}{rgb}{0,0,.545}
\definecolor{mychoco}{rgb}{.525,.27,.075}
\definecolor{myolive}{rgb}{0,.455,0}
\newcommand*{\C}{\mathcal C}
\newcommand*{\YD}{\mathcal{Y}\mathcal{D}}
\newcommand*{\YDn}{\prescript{\operatorname{n}}{}{\mathcal{Y}\mathcal{D}}}
\newcommand*{\YDnn}{\prescript{\operatorname{nn}}{}{\mathcal{Y}\mathcal{D}}}
\newcommand*{\MMM}{\mathcal{M}}
\newcommand*{\ZZZ}{\mathcal{Z}}
\newcommand*{\g}{\mathfrak{g}}
\renewcommand*{\k}{\mathfrak{k}}
\newcommand*{\gp}{\mathfrak{g}^+}
\newcommand*{\kp}{\mathfrak{k}^+}
\newcommand*{\Ob}{\operatorname{Ob}}
\newcommand*{\Aut}{\operatorname{Aut}}
\newcommand*{\End}{\operatorname{End}}
\renewcommand*{\Ob}{\operatorname{Ob}}
\newcommand*{\Mor}{\operatorname{Mor}}
\newcommand*{\Id}{\operatorname{Id}}
\newcommand*{\II}{\mathbf{I}}
\newcommand*{\ModCat}{\mathbf{Mod}}
\newcommand*{\Modn}{\mathbf{Mod}^{\operatorname{n}}}
\newcommand*{\Modnn}{\mathbf{Mod}^{\operatorname{nn}}}
\newcommand*{\Modasnn}{\mathbf{Mod}^{\operatorname{as,nn}}}
\newcommand*{\CoModnn}{\mathbf{Mod}_{\operatorname{nn}}}
\newcommand*{\CoModn}{\mathbf{Mod}_{\operatorname{n}}}
\newcommand*{\Set}{\mathbf{Set}}
\newcommand*{\kk}{\Bbbk}
\newcommand*{\ZZ}{\mathbb{Z}}
\newcommand*{\Vect}{\mathbf{Vect}_{\kk}}
\newcommand*{\vect}{\mathbf{vect}_{\kk}}
\newcommand*{\Dr}{\mathcal{D}}
\newcommand*{\wop}{\mathrel{\ensuremath{\widetilde{\lhd}}}}
\newcommand*{\op}{\mathrel{\ensuremath{\lhd}}}
\newcommand*{\mop}{\mathrel{\ensuremath{\blacktriangleleft}}}
\newcommand*{\wotimes}{\mathrel{\ensuremath{\widetilde{\otimes}}}}
\newcommand*{\wk}{k'}
\newcommand*{\wm}{m'}
\renewcommand*{\wr}{r'}
\newcommand*{\ws}{s'}
\newcommand*{\wv}{v'}
\newcommand*{\wg}{g'}
\newcommand*{\oV}{\overline{V}}
\newcommand*{\osigma}{\overline{\sigma}}
\newtheorem{proposition}{Proposition}[section]
\newtheorem{theorem}{Theorem}
\newtheorem{lemma}[proposition]{Lemma}
\theoremstyle{definition}
\newtheorem{definition}[proposition]{Definition}
\newtheorem{notation}[proposition]{Notation}
\newtheorem{remark}[proposition]{Remark}
\newtheorem{example}[proposition]{Example}
\begin{document}
\title[Generalized Yetter-Drinfel$'$d modules]{Representations of crossed modules\\ and other generalized Yetter-Drinfel$'$d modules}
\author{Victoria Lebed}
\address{Laboratoire de Math\'ematiques Jean Leray, Universit\'e de Nantes, 2, rue de La Houssini\`ere, 44322 Nantes cedex 3, France}
\email{lebed.victoria@gmail.com}

\author{Friedrich Wagemann}
\address{Laboratoire de Math\'ematiques Jean Leray, Universit\'e de Nantes, 2, rue de La Houssini\`ere, 44322 Nantes cedex 3, France}
\email{wagemann@math.univ-nantes.fr}

\begin{abstract}
The Yang-Baxter equation plays a fundamental role in various areas of mathematics. Its solutions, called braidings, are built, among others, from Yetter-Drinfel$'$d modules over a Hopf algebra, from self-distributive structures, and from crossed modules of groups. In the present paper these three sources of solutions are unified inside the framework of Yetter-Drinfel$'$d modules over a braided system. A systematic construction of braiding structures on such modules is provided. Some general categorical methods of obtaining such generalized Yetter-Drinfel$'$d (=GYD) modules are described. Among the braidings recovered using these constructions are the Woronowicz and the Hennings braidings on a Hopf algebra. We also introduce the notions of crossed modules of shelves / Leibniz algebras, and interpret them as GYD modules. This yields new sources of braidings. We discuss whether these braidings stem from a braided monoidal category, and discover several non-strict pre-tensor categories with interesting associators. 
\end{abstract}

\keywords{Yang-Baxter equation, Yetter-Drinfel$'$d module, Hopf algebra, self-distributivity, crossed module of groups, crossed module of racks, crossed module of Lie algebras, braided system, monoidal category, associator}

\subjclass[2010]{16T25, 
16T05, 
17A32, 
20N02, 
18D10 
} 

\maketitle 


\section{Introduction}

A \textit{Yang-Baxter operator}, or a \textit{braiding}, is a map $\sigma \colon V \otimes V \to V \otimes V$ providing a solution to the \textit{Yang-Baxter equation}
\begin{align}\label{eqn:YBE}\tag{YBE}
(\sigma\otimes V)\circ (V  \otimes\sigma)\circ (\sigma\otimes V) &=
(V\otimes\sigma)\circ (\sigma \otimes V)\circ (V\otimes\sigma);
\end{align}
here and below we use notations of type $V := \Id_V$. This equation makes sense in any strict monoidal category, but in this paper we mainly work in the category $\Vect$ of vector spaces over a field~$\kk$ and in the category $\Set$ of sets (with the symbol~$\otimes$ meaning the tensor product over~$\kk$ and the Cartesian product respectively). The term ``braiding'' comes from the graphical interpretation of \eqref{eqn:YBE}, illustrated in Fig.~\ref{pic:YBE}; here the braiding~$\sigma$ is denoted by \begin{tikzpicture}[scale=0.4]
\draw [rounded corners](0,0)--(0,0.25)--(0.4,0.4);
\draw [rounded corners](0.6,0.6)--(1,0.75)--(1,1);
\draw [rounded corners](1,0)--(1,0.25)--(0,0.75)--(0,1);
\end{tikzpicture}, and all diagrams should be read from bottom to top.
\begin{center}
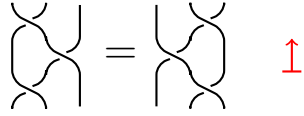

\begin{tikzpicture}[scale=0.45]
\draw [thick,  rounded corners](0,0)--(0,0.25)--(0.4,0.4);
\draw [thick,  rounded corners](0.6,0.6)--(1,0.75)--(1,1.25)--(1.4,1.4);
\draw [thick,  rounded corners](1.6,1.6)--(2,1.75)--(2,3);
\draw [thick,  rounded corners](1,0)--(1,0.25)--(0,0.75)--(0,2.25)--(0.4,2.4);
\draw [thick,  rounded corners](0.6,2.6)--(1,2.75)--(1,3);
\draw [thick,  rounded corners](2,0)--(2,1.25)--(1,1.75)--(1,2.25)--(0,2.75)--(0,3);
\node  at (3.2,1.5){\Large $=$};
\end{tikzpicture}
\begin{tikzpicture}[scale=0.45]
\draw [thick,  rounded corners](1,1)--(1,1.25)--(1.4,1.4);
\draw [thick,  rounded corners](1.6,1.6)--(2,1.75)--(2,3.25)--(1,3.75)--(1,4);
\draw [thick,  rounded corners](0,1)--(0,2.25)--(0.4,2.4);
\draw [thick,  rounded corners](0.6,2.6)--(1,2.75)--(1,3.25)--(1.4,3.4);
\draw [thick,  rounded corners](1.6,3.6)--(2,3.75)--(2,4);
\draw [thick,  rounded corners](2,1)--(2,1.25)--(1,1.75)--(1,2.25)--(0,2.75)--(0,4);
\draw [|->, red, thick]  (4,2) -- (4,3);
\end{tikzpicture}
\captionof{figure}{The YBE as the third Reidemeister move}\label{pic:YBE}
\end{center}

The YBE plays a fundamental role in such apparently distant fields as statistical mechanics, particle physics, quantum field theory, quantum group theory, and low-dimensional topology; see for instance \cite{YBE} for a brief introduction. The study of its solutions has been a vivid research area for the last half of a century. Two sources of braidings proved to be of particular importance:

\begin{description}[leftmargin=7mm]
\item[Source 1]\label{ex:YD} A \textit{(right-right)\footnote{In this paper all the (co)actions are on the right, so the term \emph{right} is systematically omitted in what follows.} Yetter-Drinfel$'$d module} over a Hopf algebra~$H$ is a vector space~$M$ endowed with a right  $H$-action~$\rho$ and a right $H$-coaction~$\delta$, compatible in the following sense:
\begin{align}\label{eqn:YD}
(m \ast h)_{(0)} \otimes (m \ast h)_{(1)} &= m_{(0)} \ast h_{(2)} \otimes s(h_{(1)})m_{(1)} h_{(3)}
\end{align}
(we use the symbol~$\ast$ for the action~$\rho$, and M.E.~Sweedler's formal notations $\delta(m)= m_{(0)} \otimes m_{(1)}$, $\Delta(h)= h_{(1)} \otimes h_{(2)}$, with the summation sign omitted). These structures were introduced by D.~Yetter~\cite{Yetter} under the name ``crossed bimodules'', and repeatedly rediscovered under different names. They are known to be at the origin of a very vast family of invertible braidings, which is complete in the category $\vect$ of finite-dimensional vector spaces \cite{FRT2,FRT1,Rad2}. Concretely, the map 
\begin{align}\label{eqn:br_YD}
\sigma_{YD}(m \otimes n)& = n_{(0)} \otimes m \ast n_{(1)}
\end{align}
endows a YD module~$M$ with a braiding. One can say more: the category~$\YD_H^H$ of YD modules over~$H$ is braided monoidal, and even modular when~$H$ is a group algebra of a finite group. This rich categorical structure is at the heart of powerful invariants of links and $3$-dimensional manifolds.

\item[Source 2]\label{ex:SD} A \textit{self-distributive set}, or briefly \textit{shelf}, is a set~$S$ endowed with a binary operation~$\op$ which is self-distributive, in the sense of
\begin{align}\label{eqn:SD}
(a \op b) \op c = (a \op c) \op (b \op c).
\end{align}
Major examples are
\begin{itemize}
\item groups with the conjugation operation $g \op g' = (g')^{-1} gg'$;
\item sets~$S$ with a preferred map $f \colon S \to S$, their shelf operation defined by $s \op s' = f(s)$.
\end{itemize}
A shelf carries the following braiding: 
\begin{align}\label{eqn:br_SD}
\sigma_{SD}(a,b) = (b, a \op b),
\end{align}
which is the key ingredient of an extremely strong and efficiently computable class of invariants of links, knotted surfaces, knotted graphs, and other topological objects. The self-distributive approach to knot theory originated from the work of D.~Joyce and S.V.~Matveev \cite{Joyce,Matveev}; see also~\cite{Crans} for a formulation in terms of braidings.
\end{description}

A new source of braidings was recently found by P.~Bantay~\cite{Bantay}:
\begin{description}[leftmargin=7mm]
\item[Source 3]\label{ex:CrMod} A \textit{crossed module of groups} is the data of a group morphism $\pi \colon K \to G$ and a (right) $G$-action~$\cdot$ on~$K$ by group automorphisms, compatible in the sense of
\begin{align}
k \cdot \pi(\wk) &= (\wk)^{-1}k\wk, & &k,\wk \in K, \label{eqn:CrModGr}\\
\pi (k \cdot g) &= g^{-1} \pi(k) g, & &k \in K,\, g \in G.\label{eqn:CrModGr'}
\end{align}
A $K$-graded $G$-module $(M=\oplus_{k \in K} M_k,\ast)$ with the action of any $g \in G$ sending $M_k$ onto $M_{k \cdot g}$ is called a \textit{representation} of $(K,G,\pi,\cdot)$. The map
\begin{align}\label{eqn:br_CrMod}
\sigma_{CrMod}(m \otimes n) = \sum\nolimits_{k \in K} n_k \otimes m \ast \pi(k)
\end{align}
defines a braiding on such an~$M$; here $n_k$ is the component of~$n$ living in the grading~$k$. Again, there is much more structure in the story: the representations of a crossed module form a braided monoidal category $\MMM(K,G,\pi,\cdot)$ (often abusively denoted by $\MMM(K,G)$), which is pre-modular if~$G$ and~$K$ are finite, and modular if moreover $\pi$ is a group isomorphism (in which case one recovers the category $\YD_{\kk G}^{\kk G}$). See \cite{Bantay,MaiSch} for more details.
\end{description}

In these three cases, the braidings share the same form:
\begin{align}\label{eqn:br_genYD_intro}
\sigma& = (M \otimes \rho) \circ (\tau \otimes \pi) \circ  (M \otimes \delta)
\end{align}
 (see Fig.~\ref{eqn:br_YD} for a graphical version). Here~$\tau$ is the \textit{flip} 
 $$\tau(a \otimes b) = b \otimes a;$$ 
 $\pi$ is the identity in the first two examples; for a shelf we put $\delta(a)=a \otimes a$ and $\rho(a \otimes b) = a \lhd b$; and in the last example, $\delta(m)=\sum_{k \in K} m_k \otimes k$.

\begin{center}
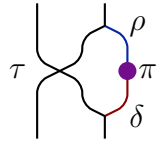

\begin{tikzpicture}[scale=0.6]
\draw [myred, thick,  rounded corners] (1.5,1.5)--(2,1.75)--(2,2.5);
\draw [myblue, thick,  rounded corners] (2,2.5)--(2,3.25)--(1.5,3.5);
\draw [thick,  rounded corners] (0,1)--(0,2.25)--(1,2.75)--(1,3.25)--(1.5,3.5);
\draw [thick]  (1.5,3.5)--(1.5,4);
\draw [thick,  rounded corners] (1.5,1.5)--(1,1.75)--(1,2.25)--(0,2.75)--(0,4);
\draw [thick] (1.5,1.5)--(1.5,1);
\fill [myviolet] (2,2.5) circle (0.2);
\node [right]  at (2,2.5){$\pi$};
\node [right]  at (1.8,1.5){$\delta$};
\node [right]  at (1.8,3.5){$\rho$};
\node [left]  at (0,2.5){$\tau$};
\end{tikzpicture}
\captionof{figure}{The general form of braidings}\label{pic:br_genYD}
\end{center}

In this paper we introduce the category~$\YD_A^C$ of \textit{generalized Yetter-Drin\-fel$'$d modules}, where~$A$ and~$C$ are \textit{braided objects} (i.e., objects endowed with braidings) in a symmetric monoidal category~$\C$, related via an \textit{entwining map} $C \otimes A \to A \otimes C$ (Section~\ref{sec:YDMod}). 
Under certain conditions on the map $\pi \colon C \to A$, formula~\eqref{eqn:br_genYD_intro} (with the map~$\tau$ replaced with the underlying categorical braiding of~$\C$) yields a braiding\footnote{The Reader should be careful with the various braidings entering our construction at different levels: the global symmetric braiding defined on the whole category~$\C$, the local braidings the objects~$A$ and~$C$ come with, and the braiding~$\sigma$ we aim to constructs for our generalized YD module.} on any generalized YD module~$M$ (Theorem~\ref{thm:genYD_br}). 
This abstract setting unifies the three braiding constructions above.  
Sections \ref{sec:RepsCrModSh}-\ref{sec:RepsCrModLA} treat some original ones, based on
\begin{itemize}
\item \textit{twisted crossed modules of shelves}, which generalize \textit{crossed modules of racks}, defined by A.~Crans and the second author~\cite{CransWag}; 
\item \textit{non-normalized crossed modules of Leibniz algebras}, which generalize classical \textit{crossed modules of Lie algebras}. 
\end{itemize}
In particular, we introduce the notion of \textit{representations of a crossed module of shelves / Leibniz algebras}, and endow them with braidings (Theorems~\ref{thm:SrModShBr}-\ref{thm:SrModLABr}). 
Section~\ref{sec:AdjMod} describes a vast source of generalized YD modules. At its heart is a categorical-center-like construction, close in spirit to the \textit{factorisations of a distributive law} of U.~Kr\"{a}hmer and P.~Slevin~\cite{KraSle}. Various generalized YD module structures on a Hopf algebra are presented as an illustration, the associated braidings recovering those of S.L.~Woronowicz and M.A.~Hennings \cite{Wor,Hennings}. Possible (pre-)tensor structures on~$\YD_A^C$ are discussed for our major examples. In the case of crossed modules of racks / Leibniz algebras, we discover \textit{pre-tensor categories} (i.e., categories with a tensor product but without a unit object) with interesting non-trivial associativity morphisms (Theorems~\ref{thm:TensorRepCrModSh} and~\ref{thm:TensorRepCrModLA}).

The idea of mixing compatible acting and coacting structures can be found in the literature under various guises: J.~Beck's mixed distributive laws~\cite{Beck}, the $AC$-bialgebras of T.F.~Fox and M.~Markl~\cite{FoxMarkl}, the algebra-coalgebra entwining structures of T.~Brzezi{\'n}ski and S.~Majid~\cite{EntwAlgCoalg}, J.-L.~Loday's generalized bialgebras~\cite{GenBialg} interpreted in terms of bimodules over a bimonad by M.~Livernet, B.~Mesablishvili, and R.~Wisbauer \cite{Bimonads,BialgBimonads}, to cite just a few. 
The framework chosen in each case depends on the classical constructions and results one wants to extend to a generalized setting: the triple-cotriple philosophy of~\cite{FoxMarkl} is well adapted for (co)homology constructions, the category of vector spaces is sufficient for developing quantum principal bundle theory and generalized gauge theory in~\cite{EntwAlgCoalg}, while operads provide a convenient setting for generalizing Poincar\'{e}-Birkhoff-Witt, Cartier-Milnor-Moore, and the Rigidity theorems in~\cite{GenBialg}. The \textit{braided framework} (with a ``braided-distributive'' law relating the action and coaction by braided objects) is adopted in the present paper for the following reasons:
\begin{itemize}
\item as shown in~\cite{Lebed1,Lebed2}, it includes all the basic structures we are interested in: (co)associative (co)algebras, bialgebras, Leibniz algebras, shelves, etc.;
\item it allows one to treat both structural and entwining maps for acting and coacting structures in a uniform way;
\item technical verifications can often be substituted with the more user-friendly and transparent diagrammatic calculus; 
\item the map~\eqref{eqn:br_genYD_intro} is well defined and remains a reasonable candidate for being a braiding on our modules.
\end{itemize}
The connections with the framework of entwining structures are discussed in Remark~\ref{rmk:Entwining}.  
   
\textbf{Acknowledgements.} This work was supported by Henri Lebesgue Centre (University of Nantes), and by the program ANR-11-LABX-0020-01. We thank Peter Schauenburg, Ulrich Kr\"{a}hmer, and Ya\"{e}l Fr\'{e}gier for fruitful discussions. 

\section{Generalized Yetter-Drinfel$'$d modules}\label{sec:YDMod}

Fix a strict monoidal category $(\C,\otimes,\II)$. In order to introduce the notion of generalized Yetter-Drinfel$'$d modules, we first recall some definitions from~\cite{Lebed2}:

\begin{definition}[Braided vocabulary]\label{def:Braided}
$ $

\begin{itemize}
\item  A \emph{rank~$r$ braided system} in~$\C$ is a family $V_1,V_2,\ldots, V_r$ of objects of~$\C$ endowed with a \emph{(multi-)braiding}, i.e., morphisms
 $$\sigma_{i,j} = \sigma_{V_i,V_j} \colon V_i \otimes V_j \to V_j \otimes V_i, \quad 1\le {i  \le j} \le r,$$  
 satisfying the \emph{colored Yang-Baxter equation}
 \begin{align}\label{eqn:cYBE}\tag{cYBE}
 (\sigma_{j,k}\otimes V_i)\circ(V_j \otimes \sigma_{i,k})\circ(\sigma_{i,j}\otimes V_k) &=\\
 (V_k \otimes \sigma_{i,j})\circ(\sigma_{i,k}\otimes V_j)\circ(V_i \otimes \sigma_{j,k})& \notag
\end{align}
on all the tensor products $V_i \otimes V_j \otimes V_k$ with $1\le {i \le j \le k} \le r$. Such a system is denoted by $((V_i)_{1\le i\le r};(\sigma_{i,j})_{1\le i\le j\le r})$ or briefly $(\oV,\osigma)$.
\item Rank~$1$ braided systems are called \emph{braided objects} in~$\C$.
\item A \emph{(right) braided module} over a braided system $(\oV,\osigma)$  is an object~$M$ equipped with morphisms $\overline{\rho}:=(\rho_i:M\otimes V_i \rightarrow M)_{1 \le i \le r}$ satisfying, for all $1 \le i \le j \le r$,
\begin{align}\label{eqn:BrMod}
\rho_j \circ (\rho_i \otimes V_{j}) &=\rho_i \circ (\rho_j \otimes V_{i})\circ (M\otimes \sigma_{i,j}). 
\end{align} 
Here both morphisms go from $M\otimes V_i \otimes V_j$ to~$M$. 
\item A \emph{morphism between braided modules} $(M,\overline{\rho})$ and $(M',\overline{\rho'})$ over $(\oV,\osigma)$ is a morphism $\varphi \in \Mor_\C(M,M')$ respecting the module structures, in the sense of
\begin{align}\label{eqn:BrModMor}
\varphi \circ \rho_i &=  \rho'_i \circ (\varphi \otimes V_{i}). 
\end{align} 
\item The category of braided modules and their morphisms is denoted by $\ModCat_{(\oV,\osigma)}$.
 \emph{(Right) braided comodules}, their morphisms, and the category $\ModCat^{(\oV,\osigma)}$ are defined in an analogous way. 
\item A braided (co)module structure on the unit object~$\II$ is referred to as a \emph{braided (co)character}. 
\end{itemize}
\end{definition}

Note that our braidings are not necessarily invertible.

The defining relations~\eqref{eqn:cYBE} and~\eqref{eqn:BrMod} can be expressed in the language of colored knotted graphs, as shown in Fig.~\ref{pic:cYBE}.
\begin{center}
\begin{tikzpicture}[scale=0.5]
\draw [thick, myred, rounded corners](0,0)--(0,0.25)--(0.4,0.4);
\draw [thick, myred, rounded corners](0.6,0.6)--(1,0.75)--(1,1.25)--(1.4,1.4);
\draw [thick, myred, rounded corners](1.6,1.6)--(2,1.75)--(2,3);
\draw [thick, myblue, rounded corners](1,0)--(1,0.25)--(0,0.75)--(0,2.25)--(0.4,2.4);
\draw [thick, myblue, rounded corners](0.6,2.6)--(1,2.75)--(1,3);
\draw [thick, myviolet, rounded corners](2,0)--(2,1.25)--(1,1.75)--(1,2.25)--(0,2.75)--(0,3);
\node  at (0,0) [myred, below] {$V_i$};
\node  at (1,0) [myblue, below] {$V_j$};
\node  at (2,0) [myviolet, below] {$V_k$};
\node  at (3.5,1.5){$=$};
\end{tikzpicture}
\begin{tikzpicture}[scale=0.5]
\node  at (-0.5,1.5){};
\draw [thick, myblue, rounded corners](1,1)--(1,1.25)--(1.4,1.4);
\draw [thick, myblue, rounded corners](1.6,1.6)--(2,1.75)--(2,3.25)--(1,3.75)--(1,4);
\draw [thick, myred, rounded corners](0,1)--(0,2.25)--(0.4,2.4);
\draw [thick, myred, rounded corners](0.6,2.6)--(1,2.75)--(1,3.25)--(1.4,3.4);
\draw [thick, myred, rounded corners](1.6,3.6)--(2,3.75)--(2,4);
\draw [thick, myviolet, rounded corners](2,1)--(2,1.25)--(1,1.75)--(1,2.25)--(0,2.75)--(0,4);
\node  at (0,1) [myred, below] {$V_i$};
\node  at (1,1) [myblue, below] {$V_j$};
\node  at (2,1) [myviolet, below] {$V_k$};
\end{tikzpicture}
\begin{tikzpicture}[scale=0.5]
 \node at (-5,1) { };
 \draw [ultra thick, myolive] (0,0) -- (0,3);
 \draw [thick, myred] (1,0) -- (0,1);
 \draw [thick, myblue] (2,0) -- (0,2);
 \node at (0,2) [left]{$\rho_{\color{myblue} j}$};
 \node at (0,1) [left]{$\rho_{\color{myred} i}$};
 \fill[myblue] (0,2) circle (0.2);
 \fill[myred] (0,1) circle (0.2); 
 \node at (2,0) [myblue,below] {$V_j$};
 \node at (1,0) [myred,below] {$V_i$};
 \node at (0,0) [myolive,below] {$M$};
 \node  at (3.5,1.5){$=$};
\end{tikzpicture}
\begin{tikzpicture}[scale=0.5]
\node  at (-1,1.5){};
 \draw [myolive, ultra thick] (0,0) -- (0,3);
 \draw [thick, myred] (1,0) -- (0.87,0.35);
 \draw [thick, myred] (0.55,0.9) -- (0,2);
 \draw [thick, myblue] (2,0) -- (0,1);
 \node at (0,1) [left]{$\rho_{\color{myblue} j}$};
 \node at (0,2) [left]{$\rho_{\color{myred} i}$};
 \fill[myred] (0,2) circle (0.2);
 \fill[myblue] (0,1) circle (0.2); 
 \node at (2,0) [myblue,below] {$V_j$};
 \node at (1,0) [myred,below] {$V_i$};
 \node at (0,0) [myolive,below] {$M$};
 \node at (1.5,1.2) {$\sigma_{{\color{myred} i},{\color{myblue} j}}$}; 
\end{tikzpicture}

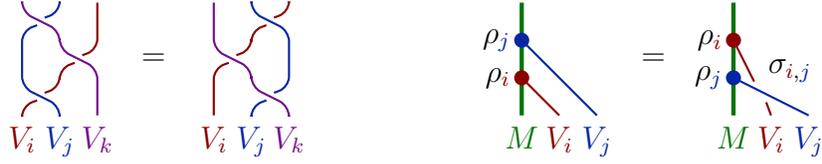
\captionof{figure}{Braided systems and braided modules}\label{pic:cYBE}
\end{center}

The following basic examples from~\cite{Lebed1} will be used in what follows:

\begin{example}[Unital associative algebras]\label{ex:Ass}
$ $

A unital associative algebra $(A,\mu,\nu)$ in~$\C$ carries the braiding
\begin{align}\label{eqn:br_Ass}
\sigma_{Ass} = \nu \otimes \mu,
\end{align}
which in the category $\Vect$ becomes $\sigma_{Ass} (v \otimes \wv) = 1 \otimes v \wv$. The YBE for $\sigma_{Ass}$ is equivalent to the associativity of~$\mu$. The notion of braided module over $(A;\sigma_{Ass})$ is slightly broader that the usual notion of module over the algebra~$A$: it involves the mixed relation 
\begin{align*}
\rho \circ (\rho \otimes A) \circ (M \otimes \nu \otimes \mu) &= \rho \circ (\rho \otimes A)
\end{align*}
instead of the usual separate relations 
\begin{align*}
\rho \circ (M \otimes \mu) &= \rho \circ (\rho \otimes A), & \rho \circ (M \otimes \nu) &= M.
\end{align*}
The category of such \emph{non-normalized algebra modules}, with the usual notion of morphisms, is denoted by $\Modnn_A$. The notation $\Modn_A$ is reserved for usual, or normalized, algebra modules.
\end{example}

\begin{example}[Counital coassociative coalgebras]\label{ex:coAss}
$ $

Dually, a counital coassociative algebra $(C,\Delta,\varepsilon)$ in~$\C$ is a braided object, with the braiding
\begin{align}\label{eqn:br_coAss}
\sigma_{coAss} = \varepsilon \otimes \Delta,
\end{align}
or, in $\Vect$, $\sigma_{coAss} (v \otimes \wv) = \varepsilon(v) \Delta(\wv)$. The YBE for $\sigma_{coAss}$ is equivalent to the coassociativity of~$\Delta$. Similarly to the algebra case, braided comodules over $(C;\sigma_{coAss})$ form a category $\CoModnn^C$ which extends the category $\CoModn^C$ of usual $C$-comodules.
\end{example}

\begin{example}[Shelves]\label{ex:SDbr}
$ $

A shelf $(S,\op)$ is a braided object in~$\Set$, with the braiding~$\sigma_{SD}$ from~\eqref{eqn:br_SD}. The YBE is equivalent to the self-distributivity~\eqref{eqn:SD} here, and braided modules over $(S;\sigma_{SD})$ are the usual modules over~$S$ (also called \textit{$S$-sets}, or \textit{$S$-shadows}; see Definition~\ref{def:CrModSh} for details). 
\end{example}

\begin{example}[Leibniz algebras]\label{ex:Lei}
$ $

Recall that a \textit{right (unital) Leibniz algebra} in a symmetric preadditive monoidal category~$(\C,\otimes,\II,c)$ is an object~$\g$ with morphisms $[,] \colon \g\otimes \g\to \g$ (and $\nu \colon \II\to \g$) satisfying the \textit{Leibniz} (and the \textit{Lie unit}) \textit{conditions}
\begin{align*}
[,] \circ(\g \otimes [,]) &= [,]\circ([,] \otimes \g)-[,]\circ([,] \otimes \g)\circ(\g \otimes c_{\g,\g}),\\ 
[,] \circ(\g\otimes\nu) &= [,] \circ(\nu \otimes \g) =0,
\end{align*}
which in~$\Vect$ become $[v,[w,u]]=[[v,w],u]-[[v,u],w]$ and $[v,1] = [1,v] = 0$. An example of a unital Leibniz algebra in~$\Vect$ is given by the endomorphism algebra $\End_{\kk}(M)$ of a vector space~$M$, with $[f,g] = fg-gf$ and $1 = \Id_M$. This generalization of Lie algebras appeared, in its non-unital version, in the work of C.~Cuvier and J.-L.~Loday \cite{Cuvier,Cyclic,LoLei}. 
To such data one can associate the braiding
\begin{align}\label{eqn:Lei}
\sigma_{Lei} = c_{\g,\g}+ \nu \otimes [,],
\end{align}
which in~$\Vect$ reads $\sigma_{Lei} (v \otimes \wv) = \wv \otimes v + 1 \otimes [v, \wv]$. The YBE for $\sigma_{Lei}$ is equivalent to the Leibniz condition for~$[,]$. Braided modules over $(\g;\sigma_{Lei})$ are identified with non-normalized anti-symmetric modules over our Leibniz algebra; the corresponding module category is denoted by $\Modasnn_{\g}$. See~\cite{LoPi} for more details on the representation theory of Leibniz algebras.
\end{example}

\begin{example}[Bialgebras]\label{ex:HopfYD}
$ $

Take a finite-dimensional Hopf algebra~$H$ over~$\kk$. Two braided system structures on $(H,H^*)$ were described in \cite{Lebed,Lebed2}. Braided modules over these systems include, respectively, Hopf modules and YD modules over~$H$. A rank~$4$ braided system from~\cite{Lebed2} allows one to recover Hopf bimodules. 
\end{example}

\begin{remark}
A rank~$2$ braided system $(C,A;\sigma_{C,C},\sigma_{A,A},\sigma_{C,A})$ decomposes as two braided objects $(C;\sigma_{C,C})$ and $(A;\sigma_{A,A})$, connected by an \textit{entwining map} $C \otimes A \to A \otimes C$  satisfying two compatibility conditions, namely, \eqref{eqn:cYBE} on $C\otimes C \otimes A$ and on $C\otimes A \otimes A$.
\end{remark}

\begin{definition}[Generalized YD modules]\label{def:gen_YD}
$ $

\begin{itemize}
\item Let $(C,A;\sigma_{C,C},\sigma_{A,A},\sigma_{C,A})$ be a braided system in~$\C$. A \emph{(right-right) Yetter-Drinfel$'$d module} over this system is an object~$M$ of~$\C$ with a right $(A;\sigma_{A,A})$-module structure~$\rho$ and a right $(C;\sigma_{C,C})$-comodule structure~$\delta$, compatible in the sense of
\begin{align}\label{eqn:gen_YD}
\delta \circ \rho &= (\rho \otimes C) \circ (M \otimes \sigma_{C,A}) \circ (\delta \otimes A).
\end{align}
\item A morphism between two YD modules over the same braided system is a morphism in~$\C$ preserving the module and the comodule structures (cf.~\eqref{eqn:BrModMor}).
\item The category of YD modules over $(C,A;\osigma)$ and their morphisms is denoted by~$\YD_A^C$.
\end{itemize}
\end{definition}

Condition~\eqref{eqn:gen_YD} is graphically represented in Fig.~\ref{pic:gen_YD}. It can be interpreted as the requirement for~$\delta$ to be a morphism in $\ModCat_{(A;\sigma_{A,A})}$, or equivalently the requirement for~$\rho$ to be a morphism in $\ModCat^{(C;\sigma_{C,C})}$ (Remark~\ref{rmk:ActCoactCompat}). It can also be regarded as a \textit{braided-distributive law}, which allows the action and the coaction to switch places in a composition with the help of the entwining braiding component.  
In~$\Vect$, \eqref{eqn:gen_YD} becomes
\begin{align*}
(m \ast a)_{(0)} \otimes (m \ast a)_{(1)} &= m_{(0)} \ast \widetilde{a} \otimes \widetilde{m_{(1)}},
\end{align*}
using Sweedler's notations and another formal notation $\sigma_{C,A}(c \otimes a)= \widetilde{a} \otimes \widetilde{c}$.

\begin{center}
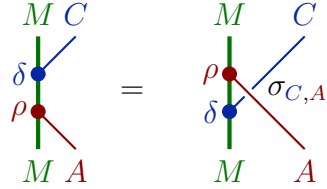

\begin{tikzpicture}[scale=0.5]
 \draw [ultra thick, myolive] (0,0) -- (0,3);
 \draw [thick, myred] (1,0) -- (0,1);
 \draw [thick, myblue] (1,3) -- (0,2);
 \node at (0,2) [left]{$\color{myblue} \delta$};
 \node at (0,1) [left]{$\color{myred} \rho$};
 \fill[myblue] (0,2) circle (0.2);
 \fill[myred] (0,1) circle (0.2); 
 \node at (1,3) [myblue,above] {$C$};
 \node at (1,0) [myred,below] {$A$};
 \node at (0,0) [myolive,below] {$M$};
 \node at (0,3) [myolive,above] {$M$}; 
 \node  at (2.5,1.5){$=$};
\end{tikzpicture}
\begin{tikzpicture}[scale=0.5]
\node  at (-1.5,1.5){};
 \draw [myolive, ultra thick] (0,0) -- (0,3);
 \draw [thick, myred] (2,0) -- (0,2);
 \draw [thick, myblue] (2,3) -- (0.6,1.6);
 \draw [thick, myblue]  (0.4,1.4) -- (0,1);
 \node at (0,1) [left]{$\color{myblue} \delta$};
 \node at (0,2) [left]{$\color{myred} \rho$};
 \fill[myred] (0,2) circle (0.2);
 \fill[myblue] (0,1) circle (0.2); 
 \node at (2,3) [myblue,above] {$C$};
 \node at (2,0) [myred,below] {$A$};
 \node at (0,0) [myolive,below] {$M$};
 \node at (0,3) [myolive,above] {$M$}; 
 \node at (1.8,1.5) {$\sigma_{{\color{myblue} C},{\color{myred} A}}$}; 
\end{tikzpicture}
\captionof{figure}{Generalized YD modules}\label{pic:gen_YD}
\end{center}

\begin{remark}[An alternative viewpoint: entwining structures]\label{rmk:Entwining}
$ $

In a sufficiently nice category (for instance in~$\Vect$ or~$\Set$), an $(A;\sigma_{A,A})$-module structure is the same thing as a module structure over the unital associative algebra
$$T_{\sigma}(A) =  \raisebox{1mm}{$T(A)$} \big/ \raisebox{-1mm}{$\langle \sigma_{A,A} - \Id_{A \otimes A}\rangle$},$$
where $T(A)$ is the tensor algebra of~$A$, $\langle \sigma_{A,A} - \Id_{A \otimes A}\rangle$ is its two-sided ideal generated by the image of $\sigma_{A,A} - \Id_{A \otimes A}$, and the product on~$T_{\sigma}(A)$ is induced by the concatenation. Dually, a $(C;\sigma_{C,C})$-comodule can be regarded as a comodule over the counital coassociative coalgebra $T_{\sigma}(C)$, with the coproduct induced by the deconcatenation. Further, the entwining map $\sigma_{C,A}$ extends to a map $\sigma_{T(C),T(A)}$ in the standard way, which then descends to a map $\sigma_{T_{\sigma}(C),T_{\sigma}(A)}$ since $\sigma_{C,A}$ respects $\sigma_{A,A}$ and $\sigma_{C,C}$ (in the sense of~\eqref{eqn:cYBE}). One obtains an entwining map $\sigma_{T_{\sigma}(C),T_{\sigma}(A)}$ between the algebra~$T_{\sigma}(A)$ and the coalgebra~$T_{\sigma}(C)$, in the sense of~\cite{EntwAlgCoalg}. Moreover, a comparison of the respective action-coaction compatibility conditions yields a category isomorphism
$$\YD_A^C \simeq \ModCat_{T_{\sigma}(A)}^{T_{\sigma}(C)},$$
with the category of $T_{\sigma}(A)$-$T_{\sigma}(C)$-bimodules in the sense of~\cite{FoxMarkl} on the right. On the other hand, such algebra-coalgebra-bimodules form a particular case of our generalized YD modules, since (co)associative structures can be regarded as braided ones, as described in Examples \ref{ex:Ass}-\ref{ex:coAss}. In what follows we stick to the braided approach, more efficient and convenient for our goals. The algebra-coalgebra viewpoint will only re-emerge in Examples~\ref{ex:AssGroup} and~\ref{ex:CrModRAsGr}.
\end{remark}

We now show how to endow YD modules over $(C,A;\osigma)$ with a braiding, provided that our category~$\C$ is symmetric, and our braided system comes with a ``nice'' \textit{connecting morphism} $\pi \colon C \to A$.

\begin{theorem}[Braiding for generalized YD modules]\label{thm:genYD_br}
$ $

Take a braided system $(C,A;\osigma)$ and a morphism $\pi \colon C \to A$ in a symmetric strict monoidal category $(\C,\otimes,\II,c)$. Suppose that for some non-negative integers $\alpha_1, \alpha_2, \gamma_1, \gamma_2$ the following technical condition is satisfied:
\begin{align}
&(A \otimes \sigma_{A,A}^{\alpha_1}) \circ (c_{A,A} \otimes \pi) \circ (\pi \otimes \sigma_{C,A}) \circ (c_{C,C} \otimes \pi) \circ (C \otimes \sigma_{C,C}^{\gamma_1}) \label{eqn:cond_pi}  \\
 = (A &\otimes \sigma_{A,A}^{\alpha_2}) \circ (A \otimes \pi \otimes A) \circ (c_{C,A} \otimes \pi) \circ (C \otimes \pi \otimes C) \circ (C \otimes \sigma_{C,C}^{\gamma_2}) \notag
\end{align}
(Fig.~\ref{pic:cond_pi}). Then any Yetter-Drinfel$'$d modules $(M_i,\rho_i,\delta_i)$ over $(C,A;\osigma)$ 
 form a braided system in~$\C$, with the braiding on $M_i \otimes M_j$ defined by
\begin{align}\label{eqn:br_genYD}
\sigma_{gYD} = ({M_j} \otimes \rho_i) \circ (c_{M_i, M_j} \otimes \pi) &\circ ({M_i} \otimes \delta_j).
\end{align}
\end{theorem}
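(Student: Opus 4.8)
The plan is to verify the colored Yang--Baxter equation~\eqref{eqn:cYBE} for the family $\{\sigma_{gYD}\}$ on any triple of YD modules $M_i,M_j,M_k$; since all of them live over the same braided system, the three local braidings share the identical shape~\eqref{eqn:br_genYD}, so the verification is uniform in the colors and reduces to a single diagrammatic computation in the graphical calculus of the symmetric category $(\C,\otimes,\II,c)$, in the spirit of Figs.~\ref{pic:cYBE} and~\ref{pic:gen_YD}. The first step is to substitute~\eqref{eqn:br_genYD} into both sides of~\eqref{eqn:cYBE}: each side becomes a composite $M_i\otimes M_j\otimes M_k \to M_k\otimes M_j\otimes M_i$ built from three coactions, three copies of~$\pi$, three actions, and three instances of the ambient symmetry~$c$. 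Tracking which module is (co)acted upon in each of the three atomic braidings reveals the same pattern on both sides: the strand $M_i$ is acted upon twice, the strand $M_k$ is coacted upon twice, and $M_j$ carries exactly one action and one coaction. The two sides differ only in the order in which these operations occur --- on $M_i$ the two actions appear in opposite orders, on $M_k$ the two coactions appear in opposite orders, and on $M_j$ the action and the coaction are interchanged.

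Next I would remove these discrepancies one module strand at a time. On $M_i$ the module axiom~\eqref{eqn:BrMod} transposes the two actions at the cost of a self-braiding~$\sigma_{A,A}$; dually, on $M_k$ the comodule axiom transposes the two coactions at the cost of a~$\sigma_{C,C}$; and on $M_j$ the YD compatibility~\eqref{eqn:gen_YD} interchanges the action and the coaction, producing the entwining~$\sigma_{C,A}$. Using the naturality of the symmetry~$c$ to slide the resulting $C$- and $A$-strands freely past the module strands, and using the cYBE of the ambient system $(C,A;\osigma)$ on $C\otimes C\otimes A$ and on $C\otimes A\otimes A$ to reorder the entwining against the self-braidings, both sides reduce to the same operation on the module strands, composed with a residual morphism that involves only $C$-strands, $A$-strands, the three~$\pi$'s, one~$\sigma_{C,A}$, and powers of~$\sigma_{C,C}$ and~$\sigma_{A,A}$.

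These two residual morphisms are exactly the two sides of the technical condition~\eqref{eqn:cond_pi}, the free exponents $\alpha_1,\alpha_2,\gamma_1,\gamma_2$ absorbing the self-braidings accumulated while reordering the repeated $A$- and $C$-strands in the previous step; invoking~\eqref{eqn:cond_pi} identifies them and finishes the proof. The delicate part, and the place where I expect the real obstacle, is precisely this bookkeeping: one must follow each of the three $C$-strands and the two superimposed $A$-strands through the naturality moves and the (co)module axioms, and confirm that the leftover self-braidings on both sides collapse into the \emph{same} powers $\sigma_{A,A}^{\alpha_\bullet}$, $\sigma_{C,C}^{\gamma_\bullet}$ that appear in~\eqref{eqn:cond_pi}. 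The very presence of free exponents in the hypothesis signals that these powers need not be small or equal a priori, so the computation must verify that whatever reorderings arise fit exactly into one of the two normal forms of~\eqref{eqn:cond_pi}.
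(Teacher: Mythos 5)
Your proposal follows essentially the same route as the paper's proof: both verify~\eqref{eqn:cYBE} diagrammatically by using the naturality and symmetry of~$c$ to isolate the coaction--$\pi$--action circuits from the permutation of the module strands, then invoke the braided (co)module axiom~\eqref{eqn:BrMod}, the YD compatibility~\eqref{eqn:gen_YD} on the middle strand, and finally the technical condition~\eqref{eqn:cond_pi} to identify the residual morphisms, with the free exponents absorbing the accumulated self-braidings exactly as you describe. The bookkeeping you flag as delicate is precisely what the paper carries out in its sequence of figures, and your accounting of which strand is acted/coacted upon and in which order is correct.
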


\begin{center}
\begin{tikzpicture}[scale=1]
\end{tikzpicture}
\begin{tikzpicture}[scale=0.5]
\draw [thick, myblue, rounded corners](0,-1)--(0,0.25)--(1,0.75)--(1,1.25)--(1.4,1.4);
\draw [thick, myblue, rounded corners](1.6,1.6)--(2,1.75)--(2,2.5);
\draw [thick, myred, rounded corners](2,2.5)--(2,4);
\draw [thick, myblue, rounded corners](1,-1)--(1,0.25)--(0,0.75)--(0,1.5);
\draw [thick, myred, rounded corners](0,1.5)--(0,2.25)--(1,2.75)--(1,4);
\draw [thick, myblue, rounded corners](2,-1)--(2,0.5);
\draw [thick, myred, rounded corners](2,0.5)--(2,1.25)--(1,1.75)--(1,2.25)--(0,2.75)--(0,4);
\node  at (0,-1) [myblue, below] {$C$};
\node  at (1,-1) [myblue, below] {$C$};
\node  at (2,-1) [myblue, below] {$C$};
\node  at (0,4) [myred, above] {$A$};
\node  at (1,4) [myred, above] {$A$};
\node  at (2,4) [myred, above] {$A$};
\node at (2,1.5) [right] {$\sigma_{{\color{myblue} C},{\color{myred} A}}$}; 
\node at (0,0.5) [left] {$c_{{\color{myblue} C},{\color{myblue} C}}$}; 
\node at (0,2.5) [left] {$c_{{\color{myred} A},{\color{myred} A}}$}; 
\node at (2,-0.5) [right] {$\sigma_{{\color{myblue} C},{\color{myblue} C}}^{\gamma_1}$}; 
\node at (2,3.5) [right] {$\sigma_{{\color{myred} A},{\color{myred} A}}^{\alpha_1}$}; 
\node at (2,0.5) [right] {$\pi$};
\node at (2,2.5) [right] {$\pi$};
\node at (0,1.5) [left] {$\pi$};
\fill [myviolet] (2,0.5) circle (0.1);
\fill [myviolet] (0,1.5) circle (0.1);
\fill [myviolet] (2,2.5) circle (0.1);
\fill [myblue] (1,-0.75) rectangle (2,-0.25);
\fill [myred] (1,3.25) rectangle (2,3.75);
\node  at (6.5,1.5){$=$};
\end{tikzpicture}
\begin{tikzpicture}[scale=0.5]
\node  at (-2.5,1.5){};
\draw [thick, myblue, rounded corners](2,0)--(2,2.5);
\draw [thick, myred, rounded corners](2,2.5)--(2,5);
\draw [thick, myblue, rounded corners](0,0)--(0,2.25)--(1,2.75)--(1,3.5);
\draw [thick, myred, rounded corners](1,3.5)--(1,5);
\draw [thick, myblue, rounded corners](1,0)--(1,1.5);
\draw [thick, myred, rounded corners](1,1.5)--(1,2.25)--(0,2.75)--(0,5);
\node  at (0,0) [myblue, below] {$C$};
\node  at (1,0) [myblue, below] {$C$};
\node  at (2,0) [myblue, below] {$C$};
\node  at (0,5) [myred, above] {$A$};
\node  at (1,5) [myred, above] {$A$};
\node  at (2,5) [myred, above] {$A$};
\node at (0,2.5) [left] {$c_{{\color{myblue} C},{\color{myred} A}}$}; 
\node at (2,0.5) [right] {$\sigma_{{\color{myblue} C},{\color{myblue} C}}^{\gamma_2}$}; 
\node at (2,4.5) [right] {$\sigma_{{\color{myred} A},{\color{myred} A}}^{\alpha_2}$}; 
\node at (0.8,3.5) [right] {$\pi$};
\node at (2,2.5) [right] {$\pi$};
\node at (0.8,1.5) [right] {$\pi$};
\fill [myviolet] (1,3.5) circle (0.1);
\fill [myviolet] (1,1.5) circle (0.1);
\fill [myviolet] (2,2.5) circle (0.1);
\fill [myblue] (1,0.25) rectangle (2,0.75);
\fill [myred] (1,4.25) rectangle (2,4.75);
\end{tikzpicture}

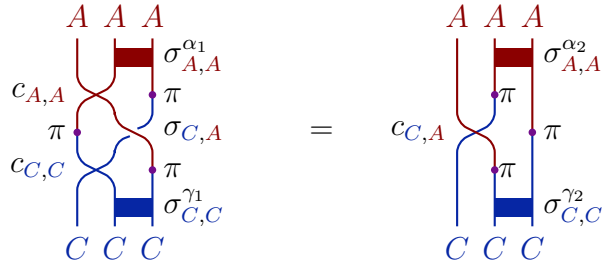
\captionof{figure}{Technical condition on the connecting morphism}\label{pic:cond_pi}
\end{center}

Fig.~\ref{pic:br_genYD} contains a diagrammatic version of this braiding. 
In~$\Vect$, it can be written using Sweedler's notations as
\begin{align*}
\sigma_{gYD}(m \otimes n)& = n_{(0)} \otimes m \ast \pi(n_{(1)}).
\end{align*}

\begin{remark}
Note that in concrete examples, $\pi$ is rarely a morphism of braided objects. 
\end{remark}

\begin{remark}\label{rmk:idempot}
In the examples we are interested in, the morphisms $\sigma_{C,C}$ and/or $\sigma_{A,A}$ are often idempotent. In this case, condition~\eqref{eqn:cond_pi} holds for all sufficiently large values of the $\gamma$'s and/or the $\alpha$'s.
\end{remark}

\begin{proof}[Proof of Theorem~\ref{thm:genYD_br}]
We verify Equation~\eqref{eqn:cYBE} for~$\sigma_{gYD}$ graphically. The naturality and the symmetry of the categorical braiding~$c$ (depicted by a solid crossing 
\,\begin{tikzpicture}[xscale=0.4,yscale=0.3]
\draw [rounded corners](0,0)--(0,0.25)--(1,0.75)--(1,1);
\draw [rounded corners](1,0)--(1,0.25)--(0,0.75)--(0,1);
\end{tikzpicture}\,) is repeatedly used: it allows one to move any strand across any part of a diagram.
The desired equation is depicted on Fig.~\ref{pic:proof}.
\begin{center}
\begin{tikzpicture}[scale=1]
\draw [ultra thick,myolive,rounded corners](0,0)--(0,0.25)--(1,0.75)--(1,1.25)--(2,1.75)--(2,3) ;
\draw (0,0) node[below]{$M_i$}; \draw (2,0) node[below]{$M_k$};
\draw (1,0) node[below]{$M_j$}; 
\draw (1.3,0.5) node[right]{{\tiny $\pi$}}; \draw (1,0.2) node[right]{{\tiny $C$}};
\draw (1,0.8) node[right]{{\tiny $A$}};
\draw [ultra thick,myolive,rounded corners](1,0)--(1,0.25)--(0,0.75)--(0,2.25)--(1,2.75)--(1,3);
\draw [ultra thick,myolive,rounded corners](2,0)--(2,1.25)--(1,1.75)--(1,2.25)--(0,2.75)--(0,3);
\draw [thick,myblue,rounded corners](2,1.25)--(2.3,1.5);
\draw [thick,myred,rounded corners](2.3,1.5)--(2,1.75);
\draw [thick,myblue,rounded corners](1,0.25)--(1.3,0.5);
\draw [thick,myred,rounded corners](1.3,0.5)--(1,0.75);
\draw [thick,myblue,rounded corners](1,2.25)--(1.3,2.5);
\draw [thick,myred,rounded corners](1.3,2.5)--(1,2.75);
\draw (2.3,1.5) node[right]{{\tiny $\pi$}}; \draw (2,1.2) node[right]{{\tiny $C$}};
\draw (2,1.8) node[right]{{\tiny $A$}};
\draw (1.3,2.5) node[right]{{\tiny $\pi$}}; \draw (1,2.2) node[right]{{\tiny $C$}};
\draw (1,2.8) node[right]{{\tiny $A$}};
\fill [myviolet] (1.3,0.5) circle (0.1);
\fill [myviolet] (2.3,1.5) circle (0.1);
\fill [myviolet] (1.3,2.5) circle (0.1);
\node  at (3.2,1.5){\Large $\stackrel{?}{=}$};
\end{tikzpicture}
\begin{tikzpicture}[scale=1]
\draw (0,0) node[below]{$M_i$}; \draw (2,0) node[below]{$M_j$};
\draw (1,0) node[below]{$M_k$};
\draw [ultra thick,myolive,  rounded corners](1,0)--(1,0.25)--(2,0.75)--(2,2.25)--(1,2.75)--(1,3);
\draw [ultra thick,myolive,  rounded corners](0,0)--(0,1.25)--(1,1.75)--(1,2.25)--(2,2.75)--(2,3);
\draw [ultra thick,myolive,  rounded corners](2,0)--(2,0.25)--(1,0.75)--(1,1.25)--(0,1.75)--(0,3);
\draw (2.3,0.5) node[right]{{\tiny $\pi$}}; \draw (2,0.2) node[right]{{\tiny $C$}};
\draw (2,0.8) node[right]{{\tiny $A$}}; 
\draw (2.3,2.5) node[right]{{\tiny $\pi$}}; \draw (2,2.2) node[right]{{\tiny $C$}};
\draw (2,2.8) node[right]{{\tiny $A$}};
\draw (1.3,1.5) node[right]{{\tiny $\pi$}}; \draw (1,1.2) node[right]{{\tiny $C$}};
\draw (1,1.8) node[right]{{\tiny $A$}};
\draw [thick,myblue,rounded corners](2,0.25)--(2.3,0.5);
\draw [thick,myred,rounded corners]((2.3,0.5)--(2,0.75);
\draw [thick,myblue,rounded corners](1,1.25)--(1.3,1.5);
\draw [thick,myred,rounded corners](1.3,1.5)--(1,1.75);
\draw [thick,myblue,rounded corners](2,2.25)--(2.3,2.5);
\draw [thick,myred,rounded corners](2.3,2.5)--(2,2.75);
\fill [myviolet] (2.3,0.5) circle (0.1);
\fill [myviolet] (1.3,1.5) circle (0.1);
\fill [myviolet] (2.3,2.5) circle (0.1);
\end{tikzpicture}

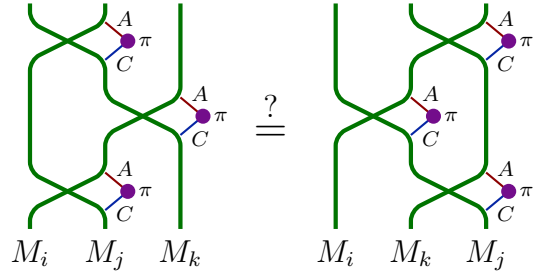
\captionof{figure}{Claim of the theorem}\label{pic:proof}
\end{center}
We work on both sides of the equality in order to save space. 
Using the naturality and the symmetry of~$c$, one moves all the blue-red coaction-action circuits to the bottom of the diagrams. Fig.~\ref{pic:proof2} contains the resulting picture.
\begin{center}
\begin{tikzpicture}[scale=1]
\draw [ultra thick,myolive,  rounded corners](1,0)--(1,1.25)--(0,1.75)--(0,2.25)--(1,2.75)--(1,3);
\draw [ultra thick,myolive,  rounded corners](2,0)--(2,1.75)--(1,2.25)--(0,2.75)--(0,3);
\draw [ultra thick,myolive,  rounded corners](0,0)--(0,1.25)--(1,1.75)--(2,2.25)--(2,3);
\draw [thick,myblue,rounded corners](2,0.35)--(2.3,0.475);
\draw [thick,myred,rounded corners]((2.3,0.475)--(1,0.6);
\draw [thick,myblue,rounded corners](1,0.25)--(1.3,0.375)--(1.6,0.725);
\draw [thick,myred,rounded corners](1.6,0.725)--(1.3,0.825)--(0,0.9);
\draw [thick,myblue,rounded corners](2,0.15)--(2.3,0.275)--(2.6,0.575);
\draw [thick,myred,rounded corners](2.6,0.575)--(2.3,0.975)--(0,1.1);
\fill [myviolet] (2.6,0.575) circle (0.08);
\fill [myviolet] (2.3,0.475) circle (0.08);
\fill [myviolet] (1.6,0.725) circle (0.08);
\node  at (3.5,1.5){\Large $\stackrel{?}{=}$};
\end{tikzpicture}
\begin{tikzpicture}[scale=1]
\node  at (-0.5,1.5){$ $};
\draw [ultra thick,myolive,  rounded corners](1,0)--(1,1.25)--(2,1.75)--(2,2.25)--(1,2.75)--(1,3);
\draw [ultra thick,myolive,  rounded corners](0,0)--(0,1.75)--(1,2.25)--(2,2.75)--(2,3);
\draw [ultra thick,myolive,  rounded corners](2,0)--(2,1.25)--(1,1.75)--(0,2.25)--(0,3);
\draw [thick,myblue,rounded corners](2,0.15)--(2.3,0.275)--(2.6,0.425);
\draw [thick,myred,rounded corners](2.6,0.425)--(2.3,0.625)--(1,0.75);
\draw [thick,myblue,rounded corners](1,0.85)--(1.3,0.975);
\draw [thick,myred,rounded corners](1.3,0.975)--(0,1.1);
\draw [thick,myblue,rounded corners](2,0.35)--(2.3,0.475);
\draw [thick,myred,rounded corners](2.3,0.475)--(0,0.6);
\fill [myviolet] (2.3,0.475) circle (0.08);
\fill [myviolet] (2.6,0.425) circle (0.08);
\fill [myviolet] (1.3,0.975) circle (0.08);
\end{tikzpicture}

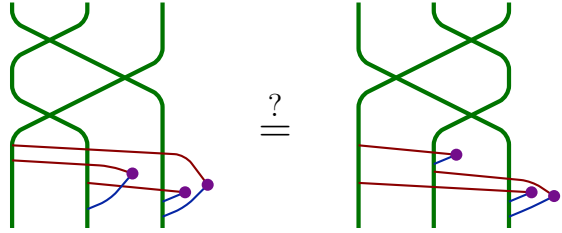
\captionof{figure}{Dragging down the coaction-action circuits}\label{pic:proof2}
\end{center}

The YBE for the braiding~$c$ allows one to identify the upper parts of the diagrams. To compare the lower parts, recall that ``coacting'' blue strands or ``acting'' red strands can be twisted near a thick green strand any number of times thanks to the defining property~\eqref{eqn:BrMod} of braided (co-)modules (Fig.~\ref{pic:cYBE}); we display these multiple twists by solid boxes. Moreover, acting and coacting strands can be switched using the defining property~\eqref{eqn:gen_YD} of generalized YD modules (Fig.~\ref{pic:gen_YD}); we apply this to the middle strand of the diagram on the right. The lower parts of our diagrams can thus be transformed to the ones on Fig.~\ref{pic:proof3}. 
\begin{center}
\begin{tikzpicture}[scale=1]
\draw [ultra thick,myolive,rounded corners](0,0)--(0,3) ;
\draw [ultra thick,myolive,rounded corners](1,0)--(1,3);
\draw [ultra thick,myolive,rounded corners](2,0)--(2,3);
\draw [thick,myblue,rounded corners](2,0.4)--(2.3,0.4)--(2.3,0.5);
\draw [thick,myred,rounded corners](2.5,1.7)--(0,2.2);
\draw [thick,myblue,rounded corners](1,0.25)--(1.5,0.25)--(1.5,1.6);
\draw [thick,myred,rounded corners](1.5,1.6)--(0,1.9);
\draw [thick,myblue,rounded corners](2,0.3)--(2.5,0.3)--(2.5,1.7);
\draw [thick,myred,rounded corners](2.3,1)--(1,1.3);
\fill [myred] (0.3,1.7) rectangle (0.5,2.3);\draw (0.4,1.7) node[below]{$\alpha_2$};
\fill [myblue] (2.2,0.5) rectangle (2.6,0.7);\draw (2.5,0.5) node[right]{$\gamma_2$};
\draw [thick,myblue,rounded corners](2.3,0.7)--(2.3,1);
\draw [thick,myblue,rounded corners](2.5,0.7)--(2.5,1.5);
\fill [myviolet] (1.5,1.6) circle (0.08);
\fill [myviolet] (2.5,1.7) circle (0.08);
\fill [myviolet] (2.3,1) circle (0.08);
\node  at (4,1.5){\Large $\stackrel{?}{=}$};
\node  at (5,1.5){$ $};
\end{tikzpicture}
\begin{tikzpicture}[scale=1]
\draw [ultra thick,myolive,rounded corners](0,0.2)--(0,3.2);
\draw [ultra thick,myolive,rounded corners](1,0.2)--(1,3.2);
\draw [ultra thick,myolive,rounded corners](2,0.2)--(2,3.2);
\draw [thick,myblue,rounded corners](2,0.3)--(2.5,0.3)--(2.5,0.7)--(2.5,1);
\draw [thick,myblue,rounded corners](2,0.4)--(2.3,0.4)--(2.3,0.8)--(1.3,1.3)--(1.3,1.5);
\fill [myblue] (2.2,0.5) rectangle (2.6,0.7) ;\draw (2.5,0.5) node[right]{$\gamma_1$};
\draw [thick,myred,rounded corners](2.5,1)--(1,1.9);
\draw [thick,myblue,rounded corners](1,0.55)--(1.7,0.8)--(1.7,1.35);
\draw [thick,myblue,rounded corners](1.7,1.6)--(1.7,1.9)--(1.3,2.2);
\draw [thick,myred,rounded corners](1.3,2.2)--(0,3);
\draw [thick,myred,rounded corners](1.3,1.5)--(1.3,1.9)--(0,2.7);
\fill [myred] (0.3,2.3) rectangle (0.5,2.9) ;\draw (0.5,2.3) node[below]{$\alpha_1$};
\fill [myviolet] (1.3,2.2) circle (0.08);
\fill [myviolet] (2.5,1) circle (0.08);
\fill [myviolet] (1.3,1.5) circle (0.08);
\end{tikzpicture}

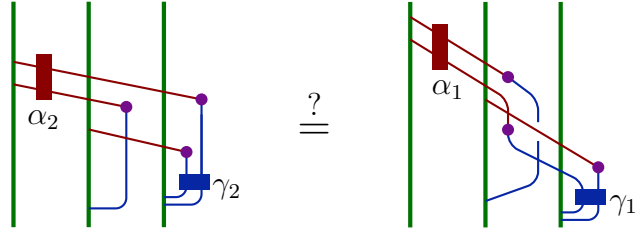
\captionof{figure}{Transformed lower parts of the diagrams}\label{pic:proof3}
\end{center}
This last assertion follows from our technical hypothesis~\eqref{eqn:cond_pi} (Fig.~\ref{pic:cond_pi}). 
\qedhere
\end{proof}

To illustrate the unifying nature of the notion of generalized YD modules and of the braidings provided by the theorem, we now interpret usual YD modules and representations of a crossed module of groups as YD modules over carefully chosen braided systems. The braidings given by our theorem in these two settings recover the usual braidings for these structures, recalled in the Introduction. Original examples will be treated in the following sections.

\begin{example}[Generalized YD modules generalize usual YD modules]\label{ex:YD'}
$ $

To a Hopf algebra $(H,\mu,\nu,\varepsilon,\Delta,S)$ in a symmetric monoidal category $(\C,\otimes,\II,c)$, one can associate the following rank $2$ braided system in~$\C$:
\begin{itemize}
\item its components are two copies of~$H$: $C=A=H$;
\item the braiding is defined by 
\begin{align*}
&\sigma_{C,C} = \sigma_{coAss}, \qquad \sigma_{A,A} = \sigma_{Ass},\\
&\sigma_{C,A} = (H \otimes \mu^2) \circ (c_{H,H} \otimes {H \otimes H}) \circ (S \otimes c_{H,H} \otimes H) \circ \\
& \qquad\qquad (c_{H,H} \otimes {H \otimes H}) \circ (H \otimes \Delta^2)
\end{align*} 
(where $\mu^2=\mu \circ (\mu \otimes H)$, $ \Delta^2 = (\Delta \otimes H) \circ \Delta$).
\end{itemize}
In~$\Vect$, the morphism~$\sigma_{C,A}$ takes the familiar form
\begin{align*}
\sigma_{C,A}(h \otimes h')& = h'_{(2)} \otimes S(h'_{(1)})hh'_{(3)}.
\end{align*}
Note that it is not a braiding on~$H$ in general. As mentioned in Examples~\ref{ex:Ass} and~\ref{ex:coAss}, the cYBE on $C\otimes C \otimes C$ and on $A\otimes A \otimes A$ follows from the coassociativity of~$\Delta$ and, respectively, from the associativity of~$\mu$. The verification of the remaining instances of the cYBE (on $C\otimes C \otimes A$ and on $C\otimes A \otimes A$) is lengthy but straightforward. 

Recall the braided module analysis from Examples~\ref{ex:Ass} and~\ref{ex:coAss}. Together with a comparison of the definition of~$\sigma_{C,A}$ with the defining relation~\eqref{eqn:YD} for usual YD modules, this identifies $\YD_A^C$ as the category $\YDnn_H^H$of non-normalized (in the sense of Examples~\ref{ex:Ass} and~\ref{ex:coAss}) YD modules over~$H$ in~$\C$. The category $\YDn_H^H = \YD_H^H$  of usual, or normalized, YD modules is its full subcategory:
$$\YD_H^H = \YDn_H^H \hookrightarrow \YDnn_H^H =\YD_A^C.$$

Now, consider the morphism $\pi = \Id_H$. A direct verification shows that it satisfies condition~\eqref{eqn:cond_pi} with $\alpha_1 = \alpha_2 = \gamma_1 = \gamma_2 = 1$. Thus Theorem~\ref{thm:genYD_br} applies here. For $\C = \Vect$, the braiding obtained is precisely the braiding~$\sigma_{YD}$ from~\eqref{eqn:br_YD}.

More generally, an analogous rank~$2$ braided system can be constructed for an $H$-bimodule coalgebra~$C$ and an $H$-bicomodule algebra~$A$. YD modules over this system yield a non-normalized version of $(H,A,C)$-crossed $H$-modules, as defined by S.~Caenepeel, G.~Militaru, and S.~Zhu~\cite{CaMiZhu}. These modules can thus be endowed with braidings, provided that additionally one has a connecting morphism $\pi \colon C \to A$.
\end{example}

\begin{example}[Representations of a crossed module of groups as generalized YD modules]\label{ex:CrMod'}
$ $

To a crossed module of groups $(K, G, \pi, \cdot)$ (see the Introduction), one can associate the following rank $2$ braided system in~$\Vect$ (or analogously in $\ModCat_{R}$ for a unital commutative ring~$R$):
\begin{itemize}
\item as components, take $C=\kk K$ and $A=\kk G$;
\item the braiding is defined by
\begin{align*}
&\sigma_{C,C} = \sigma_{coAss}, \qquad \sigma_{A,A} = \sigma_{Ass},\\
&\sigma_{C,A} (k \otimes g) = g \otimes (k \cdot g),
\end{align*} 
where~$\kk K$ and~$\kk G$ are endowed with the usual Hopf algebras structure (given by a linearization of the maps $\Delta(g) = g \otimes g$, $\varepsilon(g)=1$, $\mu(g \otimes g') = gg'$, $\nu(1)=e$, $S(g) = g^{-1}$).
\end{itemize}
As usual, the cYBE on $C\otimes C \otimes C$ and on $A\otimes A \otimes A$ follows from the coassociativity of~$\Delta$ and, respectively, from the associativity of~$\mu$. The cYBE on $C\otimes C \otimes A$ is obvious, and on $C\otimes A \otimes A$ it is a consequence of~$\cdot$ being a group action. 

Now, consider a $\kk K$-coaction~$\delta$ on~$M$ which is counital, in the sense of $(M \otimes \varepsilon) \circ \delta = M$, where $\varepsilon(k) = 1 \in \kk$ for all $k \in K$. Such a coaction is the same thing as a $K$-grading: the correspondence is given by
\begin{align*}
\delta(m) &= \sum\nolimits_{k \in K} m_k \otimes k, & \text{and }\qquad m_k &= (M \otimes \partial_k)\circ \delta(m),
\end{align*}  
where~$\partial_k$ is the linearization of the \emph{Kronecker delta map} $\partial_k(k') = \delta_{k,k'}$. Moreover, a map $M \to M'$ is compatible with $\kk K$-coactions if and only if it preserves the corresponding $K$-gradings. Further, condition~\eqref{eqn:gen_YD} defining a generalized YD module is equivalent here to all $g \in G$ sending~$M_k$ onto~$M_{k \cdot g}$. Thus Bantay's representation category $\MMM(K,G)$ is the full subcategory of $\YD_{\kk G}^{\kk K}$ consisting of all normalized modules (in the sense of Examples~\ref{ex:Ass} and~\ref{ex:coAss}). In other words, one has a category inclusion
$$\MMM(K,G) = \YDn_{\kk G}^{\kk K} \hookrightarrow \YDnn_{\kk G}^{\kk K} = \YD_{\kk G}^{\kk K}.$$

Condition~\eqref{eqn:cond_pi} for~$\pi$ holds true with $\alpha_1 = \alpha_2 = \gamma_1 = \gamma_2 = 1$; this follows from~\eqref{eqn:CrModGr} and from~$\pi$ being a group morphism. The braiding from Theorem~\ref{thm:genYD_br} coincides here with~$\sigma_{CrMod}$ from~\eqref{eqn:br_CrMod}.
\end{example}

\begin{remark}[Generalized YD modules as braided modules]
$ $

If the object~$C$ admits a dual~$C^*$ in~$\C$, then the braided system $(C,A;\osigma)$ can be partially dualized to $(A,C^*;\osigma^*)$, with a category isomorphism
\begin{align}\label{eqn:gYD_dual}
\YD_A^C &\simeq \ModCat_{(A,C^*;\osigma^*)}.
\end{align}
In the context of Example~\ref{ex:YD'}, the dual~$C^*$ exists if~$H$ is a finite-dimensional Hopf algebra over~$\kk$ (or at least is graded and of finite dimension in every degree). In this case \eqref{eqn:gYD_dual} can be continued as 
\begin{align}\label{eqn:YD_dual}
\YD_H^H = \YDn_H^H &\hookrightarrow \YDnn_H^H \simeq \ModCat_{(H,H^*;\osigma^*)} \simeq \Modnn_{\Dr (H)},
\end{align}
where the algebra~$\Dr(H)$ is the Drinfel$'$d double of~$H$. 
In Example~\ref{ex:CrMod'}, the dual exists if~$K$ is a finite group, in which case~\eqref{eqn:gYD_dual} can be continued as 
\begin{align}\label{eqn:CrMod_dual}
\MMM(K,G) &\hookrightarrow \YDnn^{\kk K}_{\kk G} \simeq \ModCat_{(\kk G,(\kk K)^*;\osigma^*)} \simeq \Modnn_{(\kk K)^* \rtimes \kk G}.
\end{align}
Explicitly, $(\kk K)^*$ has a standard $\kk$-linear basis given by the delta maps~$\partial_k$, $k \in K$, which form a complete orthogonal system of idempotents, and~$G$ acts on $(\kk K)^*$ by algebra automorphisms according to the rule 
$$g \cdot \partial_k = \partial_{k \cdot g^{-1}}.$$ 
Note that originally Bantay \textit{defined}~$\MMM(K,G)$ as the category $\Modn_{(\kk K)^* \rtimes \kk G}$. See~\cite{Lebed,Lebed2,Lebed2ter} for a general treatment of the situations in which category isomorphisms of type \eqref{eqn:YD_dual}-\eqref{eqn:CrMod_dual} emerge.
\end{remark}

\section{Representations of a crossed module of shelves}\label{sec:RepsCrModSh}

In~\cite{CransWag}, A.~Crans and the second author generalized the notion of crossed module of groups to that of crossed module of racks, and studied its properties. We now recall their definition, and extend it to the case of shelves (see Source 2 in the Introduction for definitions). We further propose a notion of representations of such crossed modules, include it into the framework of generalized YD modules, and, using Theorem~\ref{thm:genYD_br}, obtain a new source of braidings. This source comprises both the self-distributivity braiding~$\sigma_{SD}$ and Bantay's braiding~$\sigma_{CrMod}$.   

\begin{definition}
A \emph{rack} is a shelf $(R,\op)$ for which all the \emph{right translations} 
$$t_r \colon \wr \mapsto \wr \op r$$ 
are bijective; their inverses are denoted by $\wr \mapsto \wr \wop r$.
\end{definition}

\begin{example}
A group~$G$ with the conjugation operation $g \op g' = (g')^{-1} gg'$ is a rack, called \emph{the conjugation rack of~$G$}, and denoted by $Conj(G)$.
\end{example}

\begin{definition}[Crossed modules of shelves and racks]\label{def:CrModSh}
$ $

\begin{itemize}
\item A \emph{shelf/rack morphism} between shelves/racks $(R,\op)$ and $(S,\op)$ is a map $f \colon R \to S$ intertwining their shelf operations:
$$f(r \op \wr) = f(r) \op f(\wr).$$
Shelf/rack iso-, endo- and automorphisms are defined analogously.
\item Given a shelf $(S, \op)$, an \emph{$S$-set} is a set~$M$ with a map $\mop \colon M \times S \to M$ (sometimes seen as a map $\varphi \colon S \to \End_{\Set}(M)$) satisfying
\begin{align}
(m \mop s) \mop \ws &= (m \mop \ws) \mop (s \op \ws), &&m \in M,\, s,\ws \in S. \label{eqn:ShAction}
\end{align}
\item Given a rack $(R, \op)$, an \emph{$R$-rack-set} is an $R$-set~$M$ on which~$R$ acts by bijections, i.e., the maps $m \mapsto m \mop r$ are invertible for all~$r \in R$.
\item The maps~$\mop$ above are called \emph{shelf/rack actions}.
\item \emph{$S$-modules} (or \emph{$R$-rack-modules}) in an arbitrary category~$\C$ are defined as maps $\varphi \colon S \to \End_{\C}(M)$ (or $\varphi \colon R \to \Aut_{\Set}(M)$) satisfying $\varphi(s) \varphi(\ws) = \varphi(\ws) \varphi(s \op \ws)$.
\item A \emph{crossed module of shelves} is the data of a shelf morphism $\pi \colon R \to S$ and a shelf action~$\cdot$ of~$S$ on~$R$ by shelf morphisms, compatible in the sense of
\begin{align}
r \cdot \pi(\wr) &= r \op \wr, &&r,\wr \in R, \label{eqn:CrModSh}\\
\pi (r \cdot s) &= \pi(r) \op s, &&r \in R,\, s \in S.\label{eqn:CrModSh'}
\end{align}
\item If~$R$ and~$S$ above are racks, with~$S$ acting by rack {auto}morphisms, one talks about a \emph{crossed module of racks}. 
\item An \emph{augmented rack} is the data of a group~$G$, a $G$-set~$R$, and a $G$-equivariant map $\pi \colon R \to G$, in the sense of~\eqref{eqn:CrModGr'}.
\end{itemize}
\end{definition}

\begin{remark}[An alternative definition]\label{rmk:GenAugShelf}
$ $

The definition of a crossed module of shelves/racks is redundant: it suffices to have a \emph{generalized augmented shelf/rack}, that is, a shelf/rack $S$, an $S$-set or $S$-rack-set $R$, and an $S$-equivariant map $\pi \colon R \to S$ (in the sense of~\eqref{eqn:CrModSh'}). For this data, relation~\eqref{eqn:CrModSh} can be taken as the definition of a shelf/rack operation on~$R$, called \emph{the induced operation}; with this choice, $\pi$ becomes a shelf morphism, and~$S$ acts on~$R$ by shelf (auto)morphisms. See~\cite{CransWag} for more details. We keep the original definition in order to better see its analogy with that of a crossed module of groups, but in practice often turn to the lighter one. 
\end{remark}

\begin{remark}[An augmented rack as a crossed module of racks]\label{rmk:Induced}
$ $

Take an augmented rack $(R,G,\pi,\cdot)$. Since a group action by~$G$ can be viewed as a rack action by $Conj(G)$, one obtains a generalized augmented rack $(R,Conj(G),\pi,\cdot)$, which is in fact a crossed module of racks (Remark~\ref{rmk:GenAugShelf}). In particular, $R$ can be endowed with the induced rack structure $r \op \wr = r \cdot \pi(\wr)$, which justifies the term \emph{augmented rack}.
\end{remark}

\begin{example}[Augmentation over the associated group of a rack]\label{ex:AssGroup}
$ $

The \emph{associated group} $Ass(R)$ of a rack $(R, \op)$ is the free group on~$R$ modulo the relations 
\begin{align}\label{eqn:AssGroup}
r \wr &= \wr (r \op \wr), & r,\wr &\in R.
\end{align}
This construction actually defines a functor $\operatorname{Ass}$ from the category of racks to that of groups; its right adjoint $\operatorname{Conj}$ stems from the conjugation rack construction. The group $Ass(R)$ acts on~$R$ via 
\begin{align}\label{eqn:RActAssRAct}
r \cdot \wr &= r \op \wr, &r \cdot (\wr)^{-1} &= r \wop \wr.
\end{align} 
The tautological map $\pi_{Ass} \colon R \to Ass(R)$, $r \mapsto r$ is $Ass(R)$-equivariant. Thus every rack can be augmented over~$Ass(R)$. This augmentation is \emph{universal}, in the sense that for any augmented rack structure $(R,G,\pi,\cdot)$ with the same~$R$, the map~$\pi \colon R \to G$ factors through~$\pi_{Ass}$. Note also that~$\pi_{Ass}$ induces a bijection between $Ass(R)$-modules and $R$-rack-modules in any category (its inverse is given by formulas analogous to~\eqref{eqn:RActAssRAct}). Our last observation concerns the case when $(R, \op)$ is simply a shelf: $R$ is then acted on by its \emph{associated monoid} only, i.e, the free monoid on~$R$ modulo~\eqref{eqn:AssGroup}.
\end{example}

\begin{example}[Augmentation over the automorphism group of a rack]\label{ex:AutGroup}
$ $

Another augmentation of a rack $(R, \op)$ is given by the map $\pi \colon R \to \Aut(R)$ sending an $r \in R$ to the right translation map $t_r$, which is indeed a rack automorphism of~$R$. By definition, $R$ carries an $\Aut(R)$-action. The map $\pi_{Aut} \colon R \to \Aut(R)$, $r \mapsto t_r$ is easily shown to be $\Aut(R)$-equivariant, completing our augmented structure.
\end{example}

Note that in both examples above, the induced operation on~$R$ is in fact its original rack operation. According to Remark~\ref{rmk:Induced}, one thus obtains crossed modules of racks with an arbitrary rack as the ``$R$-part'' of the structure. 

We now mimic the development of the representation theory of a crossed module of groups in the generalized setting of a crossed module of shelves. 

\begin{definition}[Representations of a crossed module of shelves/racks]\label{def:CrModShRep}
$ $

\begin{itemize}
\item A \emph{set-theoretic / linear representation of a crossed module of shelves} $(R,S,\pi,\cdot)$ is an $S$-module $(M,\mop)$ in~$\Set$ / in~$\Vect$, endowed with an $R$-grading satisfying the compatibility condition 
$$M_r \mop s \subseteq M_{r \cdot s}.$$ The category of such representations (with the obvious notion of morphisms) is denoted by $\MMM_{\Set}(R,S,\pi,\cdot)$, or simply $\MMM_{\Set}(R,S)$ when this does not lead to confusion. 
The notation $\MMM_{\kk}(\ldots)$ is used in the linear setting.
\item If $(R,S,\pi,\cdot)$ above is a crossed module of racks and $(M,\mop)$ is an $S$-rack-module, then we talk about \emph{representations of a crossed module of racks} and use the notations $\MMM^{\operatorname{R}}_{\bullet}(\ldots)$.
\end{itemize}
\end{definition}

Note that a representation of a crossed module of racks satisfies a stronger compatibility condition $M_r \mop s = M_{r \cdot s}$.

\begin{example}[Adjoint representations]\label{ex:AdjRep}
$ $

Given a shelf/rack $(S,\op)$, the map $\pi = \Id_{S} \colon S \to S$ together with $s \cdot s' = s \op s'$ define a crossed module of shelves/racks, for which~$S$ itself is a representation (called \textit{adjoint}), with $s \mop s' = s \op s'$ and $S_s = \{s\}$. 
\end{example}

\begin{example}[A crossed module of groups as a crossed module of racks]\label{ex:CrModGrAsR}

$\quad$ A crossed module of groups $(K,G,\pi,\cdot)$ is in particular an augmented rack, and thus (Remark~\ref{rmk:Induced}) gives rise to the crossed modules of racks $(K,Conj(G),\pi,\cdot)$, with the induced rack operation $k \op \wk = k \cdot \pi(\wk)$ on~$K$. Relation~\eqref{eqn:CrModGr} transforms it into $k \op \wk = (\wk)^{-1} k \wk$, so our crossed modules of racks can be written as $(Conj(K),Conj(G),\pi,\cdot)$. Observe the tautological inclusion of the set-theoretic / linear representation categories
\begin{align}\label{eqn:CrModGrAsR}
& \MMM_{\bullet}(K,G) \hookrightarrow \MMM^{\operatorname{R}}_{\bullet}(Conj(K),Conj(G)).
\end{align}
It is in general strict. Indeed, taking as~$K$ the trivial group, one identifies $\MMM_{\bullet}(\{1\},G)$ with the category of $G$-modules, and $\MMM^{\operatorname{R}}_{\bullet}(\{1\},Conj(G))$ with the category of $Conj(G)$-rack-modules. Now, take a $G$-module $(M, \ast)$ with an inversion $m \mapsto \overline{m}$ satisfying $\overline{m} \ast g = \overline{m \ast g}$ (e.g., the map $m \mapsto -m$ in the linear setting). The operation $m \mop g = \overline{m} \ast g$ defines a $Conj(G)$-rack-module structure on~$M$ which is not necessarily a $G$-module structure.
\end{example}

\begin{example}[A crossed module of racks as a crossed module of groups]\label{ex:CrModRAsGr}

$\quad$ A crossed module of racks $(R,S,\pi,\cdot)$ induces a crossed modules structure $(Ass(R),Ass(S),\widetilde{\pi},\widetilde{\cdot})$ for the associated groups. An $S$-rack-module structure on~$M$ is equivalent to an $Ass(S)$-module structure on~$M$, and an $R$-grading $M = \oplus_{r \in R}M_r$ induces an $Ass(R)$-grading:  put $M_{\pi_{Ass}(r)} = \displaystyle{\oplus_{\wr\, | \, \pi_{Ass}(\wr) = \pi_{Ass}(r)}} M_{\wr}$, and declare $M_r$ trivial for $r$ outside $\pi_{Ass}(R)$. Analyzing the compatibility conditions, one sees that this yields a map 
\begin{align}\label{eqn:CrModRAsGr}
& \MMM^{\operatorname{R}}_{\bullet}(R,S) \to \MMM_{\bullet}(Ass(R),Ass(S))
\end{align}
between the corresponding set-theoretic / linear representation categories. This map is neither injective nor surjective in general. Indeed, for the cyclic rack $R_{cycl}= (\ZZ, \, r \op_c \wr = r+1)$, the associated group is the free group~$\langle t \rangle$ on one element (since $rr = r(r \op_c r)$ implies here $r = r+1$ for all~$r$). The representations of the crossed module of racks $(R_{cycl},R_{cycl},\Id_{\ZZ},\op_c)$ (cf. Example~\ref{ex:AdjRep}) are $\ZZ$-graded sets / vector spaces~$M$ endowed with a bijection $f \colon M \to M$ such that $f(M_r) = M_{r+1}$ (the $R_{cycl}$-action being defined by $m \mop r = f(m)$ for all~$r$), whereas the representations of the associated crossed module of groups $(\langle t \rangle,\, \langle t \rangle,\, \Id_{\langle t \rangle},\, g \op g' = g)$ are $\langle t \rangle$-graded~$M$ endowed with a bijection $f \colon M \to M$ preserving the grading (and inducing the $\langle t \rangle$-action $m \ast t^\alpha = f^\alpha(m)$). The correspondence~\eqref{eqn:CrModRAsGr} sends a representation $(M,\, f,\, gr)$ to $(M,\, f,\, gr_0 \colon m \mapsto t)$. On the one hand, it totally forgets the grading~$gr$ and is thus not injective; on the other hand, in its image everyone lives in degree~$t$, hence the non-surjectivity.
\end{example}

\begin{example}[Crossed modules of racks versus crossed modules of shelves]
$\quad$ A crossed module of racks is in particular a crossed module of shelves, thus accepting two types of representation theories, corresponding to the categories $\MMM^{\operatorname{R}}_{\bullet}$ and $\MMM_{\bullet}$. The second category is strictly larger in general. Indeed, one can transform a representation $(M= \oplus_{r \in R}M_r, \mop) \in \MMM_{\bullet}(R,S)$ into the following one:
$$(M \oplus M, \, (m \oplus \wm) \mop s = (m \mop s + \wm \mop s) \oplus 0, \, (M \oplus M)_r =  M_r \oplus M_r)$$ 
(with the obvious modifications in the set-theoretic setting). It does not belong to $\MMM^{\operatorname{R}}_{\bullet}(R,S)$, since the action of any $s \in S$ is non-invertible.
\end{example}

\begin{proposition}[Crossed modules of shelves as braided systems]\label{prop:CrModShBrSyst}
$ $

For a crossed module of shelves $(R,S,\pi,\cdot)$, the following data define a rank $2$ braided system in~$\Set$:
\begin{itemize}
\item as components, take $C=R$ and $A=S$;
\item the braiding is defined by
\begin{align*}
\sigma_{C,C} = \sigma_{coAss} \colon & r \otimes r' \mapsto r' \otimes r', \\
\sigma_{A,A} = \sigma_{SD} \colon & s \otimes s' \to s' \otimes (s \op s'),\\
\sigma_{C,A} \colon & r \otimes s \mapsto s \otimes (r \cdot s).
\end{align*} 
\end{itemize}
By linearization, this braided system can be transformed into one in~$\Vect$. 
\end{proposition}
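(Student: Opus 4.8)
The plan is to check the four instances of the colored Yang-Baxter equation~\eqref{eqn:cYBE} that a rank~$2$ braided system must satisfy, namely on $C \otimes C \otimes C$, $C \otimes C \otimes A$, $C \otimes A \otimes A$ and $A \otimes A \otimes A$, where $C = R$ and $A = S$. Two of these are free of charge. The instance on $A \otimes A \otimes A = S \otimes S \otimes S$ is the Yang-Baxter equation for $\sigma_{SD}$, which by Example~\ref{ex:SDbr} is equivalent to the self-distributivity of~$\op$, and hence holds since $(S,\op)$ is a shelf. The instance on $C \otimes C \otimes C = R \otimes R \otimes R$ is the Yang-Baxter equation for $\sigma_{coAss}$, equivalent by Example~\ref{ex:coAss} to the coassociativity of the coproduct $r \mapsto r \otimes r$ underlying $\sigma_{C,C}$; this holds automatically, as the diagonal makes every set a comonoid. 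For the mixed instance on $C \otimes C \otimes A$ one chases $r \otimes \wr \otimes s$ and observes that $\sigma_{C,C}$ simply discards its first tensorand; both composites of~\eqref{eqn:cYBE} then collapse to $s \otimes (\wr \cdot s) \otimes (\wr \cdot s)$, so the identity holds with no appeal to any crossed-module axiom.

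The instance on $C \otimes A \otimes A = R \otimes S \otimes S$ is the substance of the proposition. Chasing $r \otimes s \otimes \ws$ through the left-hand composite of~\eqref{eqn:cYBE} produces $\ws \otimes (s \op \ws) \otimes ((r \cdot s) \cdot \ws)$, while the right-hand composite produces $\ws \otimes (s \op \ws) \otimes ((r \cdot \ws) \cdot (s \op \ws))$. The first two tensorands already coincide, so the whole equation reduces to the single identity
\[
(r \cdot s) \cdot \ws = (r \cdot \ws) \cdot (s \op \ws),
\]
which is exactly the shelf-action axiom~\eqref{eqn:ShAction} for the action~$\cdot$ of~$S$ on~$R$ (with $m = r$). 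This is the one spot where a defining property of the structure enters, and the main thing to be careful about is getting the indices of the three braiding components and the order of the composites to match the conventions of~\eqref{eqn:cYBE} and~\eqref{eqn:ShAction}; the computation itself is routine.

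Finally, transporting the system to~$\Vect$ is formal: the free $\kk$-vector-space functor $\kk(-) \colon \Set \to \Vect$ is strong symmetric monoidal, carrying Cartesian products to tensor products, so it sends the braided system just built to a braided system in~$\Vect$, all defining equations being preserved because they hold on basis elements and the structure maps are extended $\kk$-linearly. It is worth recording that neither the compatibility relations~\eqref{eqn:CrModSh}--\eqref{eqn:CrModSh'} nor the map~$\pi$ is used in this proposition: only that $S$ is a shelf and that $R$ is an $S$-set enters, in accordance with Remark~\ref{rmk:GenAugShelf}.
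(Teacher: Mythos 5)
Your proof is correct and follows the same decomposition as the paper's: the instances on $C\otimes C\otimes C$ and $A\otimes A\otimes A$ are handled by Examples~\ref{ex:coAss} and~\ref{ex:SDbr}, the one on $C\otimes C\otimes A$ is a trivial element chase, and the one on $C\otimes A\otimes A$ reduces to the shelf-action axiom~\eqref{eqn:ShAction}; you merely write out explicitly the computations the paper calls obvious. Your closing observation that only the $S$-set structure on~$R$ is used (and not~$\pi$ or the crossed-module compatibilities) is accurate and consistent with Remark~\ref{rmk:GenAugShelf}.
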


\begin{proof}
The cYBE on $C\otimes C \otimes C$ and $A\otimes A \otimes A$ are taken care of by Examples~\ref{ex:coAss} and~\ref{ex:SDbr}. The cYBE on $C\otimes C \otimes A$ is obvious, and on $C\otimes A \otimes A$ it follows from the fact that~$\cdot$ is an $S$-action.
\end{proof}

\begin{remark}
In fact the component~$\sigma_{C,C}$ of the braiding above can also be seen as a self-distributivity  braiding, by considering the shelf operation $r \op_0 r' = r'$ on~$R$. Even better: the shelf operations~$\op$ on~$S$ and~$\op_0$ on~$R$ can be extended to a shelf operation~$\op$ on $T =S \sqcup R \sqcup \{e\}$ by putting $r \op s = r \cdot s$, $s \op r = e$, $e \op t = e$, and $t \op e = t$ for all $s \in S, r \in R, t \in T$. All the braiding components from the proposition are now particular cases of the braiding~$\sigma_{SD}$ on $(T,\op)$. 
\end{remark}

\begin{remark}\label{rmk:CrModShBrSyst}
Another rank~$2$ braided system in~$\Set$ can be defined by the same data as in the proposition except for~$\sigma_{C,C}$, which becomes 
\begin{align*}
&\sigma_{C,C} = \sigma_{SD} \colon r \otimes r' \to r' \otimes (r \op r').
\end{align*} 
 The instances of the cYBE changed with respect to the previous structure are those on $C\otimes C \otimes C$, which is an application of Example~\ref{ex:SDbr}, and on $C\otimes C \otimes A$, where it follows from the fact that~$S$ acts on~$R$ by shelf morphisms. Once again, all the braided components of this system can be seen as parts of a single self-distributivity braiding on $(T =S \sqcup R, \op)$, where $\op$ extends the shelf operations on~$S$ and~$R$ by $r \op s = r \cdot s$ and $s \op r = s \op \pi(r)$.
\end{remark}

\begin{proposition}[Representations of a crossed module of shelves as generalized YD modules]\label{prop:CrModShCatIso}
$ $

In the settings of the previous proposition, one has category inclusions
\begin{align*}
\MMM_{\Set}(R,S) & \hookrightarrow \YD_S^R, & \MMM_{\kk}(R,S)  & \hookrightarrow \YD_{\kk S}^{\kk R}.
\end{align*}
\end{proposition}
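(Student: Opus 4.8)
The plan is to exhibit, for any representation $(M, \mop)$ of the crossed module of shelves $(R,S,\pi,\cdot)$, a generalized YD module structure over the braided system $(C,A;\osigma) = (R,S;\osigma)$ of Proposition \ref{prop:CrModShBrSyst}, and to check that morphisms of representations correspond to morphisms of YD modules. The translation dictionary is forced by the examples in the excerpt: the $S$-module structure $\mop$ should become the braided $A$-action $\rho$, and the $R$-grading should become the braided $C$-coaction $\delta$ via the grading-coaction correspondence already spelled out in Example \ref{ex:CrMod'}, namely $\delta(m) = \sum_{r \in R} m_r \otimes r$ in the linear setting (with the evident set-theoretic analogue). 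I would treat the linear case $\MMM_{\kk}(R,S) \hookrightarrow \YD_{\kk S}^{\kk R}$ in detail and note that the set-theoretic case is identical mutatis mutandis.

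First I would verify that $\rho := \mop$ is a braided $(A;\sigma_{A,A})$-module, i.e.\ that it satisfies \eqref{eqn:BrMod} for $\sigma_{A,A} = \sigma_{SD}$. Unwinding the definitions, this relation reads $(m \mop s) \mop \ws = (m \mop \ws) \mop (s \op \ws)$, which is exactly the defining axiom \eqref{eqn:ShAction} of an $S$-module; so by Example \ref{ex:SDbr} (where braided modules over $(S;\sigma_{SD})$ are identified with $S$-sets) this step is immediate. Second I would check that $\delta$ is a braided $(C;\sigma_{C,C})$-comodule for $\sigma_{C,C} = \sigma_{coAss}$. By Example \ref{ex:coAss} a braided comodule over $(C;\sigma_{coAss})$ is a (non-normalized) $C$-comodule, and the grading-to-coaction assignment of Example \ref{ex:CrMod'} produces precisely such a coaction; here one uses that a grading yields a counital, hence in particular a valid, coaction, so the comodule axiom holds automatically.

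The main step—and the place where the compatibility condition of the crossed module enters—is checking the action-coaction compatibility \eqref{eqn:gen_YD}. In Sweedler-style notation this is the equality $(m \mop s)_{(0)} \otimes (m \mop s)_{(1)} = m_{(0)} \mop \widetilde{s} \otimes \widetilde{m_{(1)}}$, where $\sigma_{C,A}(r \otimes s) = s \otimes (r \cdot s)$ gives $\widetilde{s} = s$ and $\widetilde{r} = r \cdot s$. Translating through the grading correspondence, the left-hand side records the grading of $m \mop s$, while the right-hand side asserts that if $m$ is homogeneous of degree $r$ then $m \mop s$ is homogeneous of degree $r \cdot s$. Thus \eqref{eqn:gen_YD} is equivalent to the grading-compatibility condition $M_r \mop s \subseteq M_{r \cdot s}$ built into Definition \ref{def:CrModShRep}. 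I expect this to be the principal (though still routine) obstacle, since it is where one must carefully match Sweedler's formal coaction notation against the concrete homogeneous decomposition and invoke the crossed-module compatibility rather than just the separate shelf structures.

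Finally I would observe that the assignment is functorial and injective on objects and morphisms: a map $M \to M'$ is compatible with the $\kk R$-coactions exactly when it preserves the $R$-gradings (as noted in Example \ref{ex:CrMod'}), and it is compatible with $\rho = \mop$ exactly when it is a morphism of $S$-modules, so morphisms of representations coincide with morphisms of the corresponding YD modules. The inclusion is not surjective on objects because a representation requires the coaction $\delta$ to be \emph{counital}—equivalently, to come from an honest $R$-grading—whereas a general object of $\YD_{\kk S}^{\kk R}$ only carries a non-normalized $\kk R$-comodule structure (cf.\ the normalized-versus-non-normalized discussion in Examples \ref{ex:Ass}--\ref{ex:coAss} and the inclusion $\YDn \hookrightarrow \YDnn$ in Example \ref{ex:CrMod'}); this explains why the statement asserts only a category \emph{inclusion}, identifying $\MMM_{\bullet}(R,S)$ with the full subcategory of normalized YD modules.
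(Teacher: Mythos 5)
Your proposal is correct and follows essentially the same route as the paper: identify the $S$-action with a braided $(S;\sigma_{SD})$-module structure (Example~\ref{ex:SDbr}), the $R$-grading with a counital, hence braided, $(R;\sigma_{coAss})$-comodule structure, and match the crossed-module compatibility $M_r \mop s \subseteq M_{r \cdot s}$ with condition~\eqref{eqn:gen_YD} for the given $\sigma_{C,A}$. Your closing remark on why the inclusion is not an equivalence anticipates what the paper spells out separately in Proposition~\ref{prop:CatYDSR} via twisting maps and non-normalized comodules.
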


\begin{proof}
As recalled in Examples~\ref{ex:coAss} and~\ref{ex:SDbr}, an $S$-module is the same thing as a braided module over $(S;\sigma_{SD})$, and a comodule over $(R,\Delta \colon r \mapsto r \otimes r, \varepsilon \colon r \mapsto 1)$ is automatically a braided comodule over $(R;\sigma_{coAss})$. One then interprets an $R$-grading as the $R$-comodule structure 
\begin{align*}
m &\mapsto m \times gr(m), & \text{or } \qquad m &\mapsto \sum\nolimits_{r \in R} m_r \otimes r
\end{align*}
(depending on the context), and identifies the compatibility condition $M_r \mop s \subseteq M_{r \cdot s}$ with~\eqref{eqn:gen_YD} for our~$\sigma_{C,A}$.
\end{proof}

More precisely, these generalized YD modules can be viewed as decorated versions of the representations from $\MMM_{\bullet}(R,S)$:
\begin{proposition}[Twisted representations]\label{prop:CatYDSR}
$ $

Take a crossed module of shelves $(R,S,\pi,\cdot)$. 
\begin{enumerate}
\item The category $\YD_S^R$ is isomorphic to the category of set-theoretic representations $(M, \mop, gr)$ of $(R,S,\pi,\cdot)$ endowed with an additional map $f \colon M \to M$ which
\begin{itemize}
\item respects the $R$-grading~$gr$, and 
\item intertwines the $S$-action~$\mop$ (i.e., $f(m \mop s) = f(m) \mop s$). 
\end{itemize} 
\item The category $\YD_{\kk S}^{\kk R}$ is isomorphic to the category of $\kk$-linear $S$-modules $(M,\mop)$ with the following additional data:
\begin{enumerate}
\item a distinguished $S$-stable subspace~$M'$ with a compatible $R$-grading, in the sense of 
\begin{align*}
\sum\nolimits_{r' \, | \,r' \cdot s = r }m'_{r'} \mop s &\in M'_{r}, && m' \in M', r \in R, s \in S,
\end{align*}
\item
and a surjection $f \colon M \twoheadrightarrow M'$ which
\begin{itemize}
\item respects the $R$-grading when restricted to~$M'$, and 
\item intertwines the $S$-actions.
\end{itemize} 
\end{enumerate} 
\end{enumerate}
For both categories, morphisms are defined in the usual way.
\end{proposition}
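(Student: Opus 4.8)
The plan is to unravel, in each setting, the definition of a Yetter--Drinfel$'$d module over the braided system of Proposition~\ref{prop:CrModShBrSyst}, and to match the resulting data with the ``twisted'' packages in the statement. Throughout, the module part $\rho=\mop$ is handled exactly as in Proposition~\ref{prop:CrModShCatIso}: by Example~\ref{ex:SDbr} a braided $(\kk S;\sigma_{SD})$-module (resp.\ $(S;\sigma_{SD})$-module) is nothing but a (linear) $S$-module structure, so all the real content sits in the comodule part $\delta$ and in the compatibility~\eqref{eqn:gen_YD}.

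I would begin with the transparent set-theoretic case~(1). A braided comodule $\delta\colon M\to M\times R$ over $(R;\sigma_{coAss})$ has two components, $\delta(m)=(f(m),gr(m))$ with $f\colon M\to M$ and $gr\colon M\to R$; writing out the braided-comodule relation with $\sigma_{coAss}\colon r\otimes r'\mapsto r'\otimes r'$ collapses to the single equation $gr\circ f=gr$, i.e.\ $f$ respects the grading determined by $gr$. Substituting the explicit entwining $\sigma_{C,A}\colon r\otimes s\mapsto s\otimes(r\cdot s)$ into~\eqref{eqn:gen_YD} and comparing the two sides componentwise then splits the compatibility into exactly two equalities, $f(m\mop s)=f(m)\mop s$ and $gr(m\mop s)=gr(m)\cdot s$. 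The second is precisely the condition $M_r\mop s\subseteq M_{r\cdot s}$ making $(M,\mop,gr)$ a representation in $\MMM_{\Set}(R,S)$, and the first says that $f$ intertwines the $S$-action. The functor sending $(M,\rho,\delta)$ to $(M,\mop,gr,f)$ and back (recording $\delta=(f,gr)$) is then clearly bijective on objects, and morphisms match on the nose.

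For the linear case~(2) the same strategy applies, but the comodule analysis is genuinely richer, and this is where the subspace $M'$ enters. I would expand a braided $\kk R$-comodule as a locally finite family of operators $\delta_r\colon M\to M$ via $\delta(m)=\sum_r\delta_r(m)\otimes r$, and set $f:=(M\otimes\varepsilon)\circ\delta=\sum_r\delta_r$. Feeding $\sigma_{coAss}\colon r\otimes r'\mapsto r'\otimes r'$ into the braided-comodule relation of Example~\ref{ex:coAss} yields the orthogonality relations $\delta_r\delta_{r'}=0$ for $r\neq r'$; from these one extracts a grading, letting $M'$ be the image of the coaction, $M'_r:=\operatorname{Im}(\delta_r)$, and exhibiting $f\colon M\twoheadrightarrow M'$ as a surjection that respects the grading upon restriction to $M'$. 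Plugging the entwining into~\eqref{eqn:gen_YD} then reproduces, just as in~(1), the assertions that $f$ intertwines the $S$-actions, that $M'$ is $S$-stable, and that the grading on $M'$ satisfies the compatibility $\sum_{r'\mid r'\cdot s=r}m'_{r'}\mop s\in M'_r$. Conversely, from a package $(M,\mop,M',gr',f)$ one reconstructs the coaction by $\delta_r:=\mathrm{pr}_r\circ f$, with $\mathrm{pr}_r$ the $r$-th grading projection of $M'$, and checks the braided-comodule and Yetter--Drinfel$'$d axioms; functoriality and the morphism correspondence are routine.

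The main obstacle is exactly this linear comodule structure theorem in~(2): turning the raw relations on the family $(\delta_r)$ into an honest internally graded subspace $M'$ together with a genuine \emph{surjection} $f\colon M\twoheadrightarrow M'$ restricting to a grading-respecting endomorphism. In the set-theoretic world the grading is a map $gr\colon M\to R$ defined on all of $M$, so no subspace is required; in the linear world a non-normalised coaction spreads a vector across several degrees, and one must single out the correct $M'$, verify the directness of $\bigoplus_r M'_r$, and confirm that $f$ lands \emph{onto} $M'$ while respecting the grading only after restriction to $M'$ (this is the delicate point, since a braided $\kk R$-comodule need not be coassociative, so the idempotent decomposition available in the normalised case must be replaced by a careful image analysis). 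Once this identification is secured, the action-compatibility half is a direct transcription of the computation already carried out in Proposition~\ref{prop:CrModShCatIso}, and both category isomorphisms follow.
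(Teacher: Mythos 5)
Your strategy coincides with the paper's: reduce everything to a careful analysis of braided $(\,\cdot\,;\sigma_{coAss})$-comodules and then transcribe the compatibility \eqref{eqn:gen_YD}, which is exactly what the published proof does (itself only a two-sentence sketch referring back to Proposition~\ref{prop:CrModShCatIso}). Your treatment of part (1) is complete and correct: writing $\delta=(f,gr)$, the braided comodule axiom collapses to $gr\circ f=gr$, and \eqref{eqn:gen_YD} splits into $f(m\mop s)=f(m)\mop s$ and $gr(m\mop s)=gr(m)\cdot s$, matching the stated package.

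In part (2), however, the step you defer as ``verify the directness of $\bigoplus_r M'_r$'' is not a routine verification --- it is precisely where the argument can break. The braided comodule axiom for $\sigma_{coAss}$ yields only the orthogonality $\delta_{r'}\circ\delta_r=0$ for $r'\neq r$, and this does not force $\sum_r\operatorname{Im}(\delta_r)$ to be direct. For instance, take $R=\{r_1,r_2\}$ with $r\op r'=r$, $S=\{s\}$ with $s \op s = s$, $\pi\equiv s$, the trivial $S$-action on $R$, $M=\kk^2$ with $s$ acting as the identity, and $\delta_{r_1}(x,y)=\delta_{r_2}(x,y)=(y,0)$: the orthogonality relations and \eqref{eqn:gen_YD} hold, yet $\operatorname{Im}(\delta_{r_1})=\operatorname{Im}(\delta_{r_2})=\kk\,(1,0)\neq 0$, so no package $(M',\oplus_r M'_r,f)$ with $\delta_r=\mathrm{pr}_r\circ f$ can reproduce this coaction (any such package forces $M'_r=\mathrm{pr}_r(f(M))=\operatorname{Im}(\delta_r)$ and hence directness). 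So either additional hypotheses are needed or the dictionary must be set up differently; your proposal inherits this gap --- though, to be fair, the paper's own proof glosses over exactly the same point. The remaining ingredients of your plan (reconstruction of $\delta_r=\mathrm{pr}_r\circ f$ from the package, $S$-stability of $M'$, the grading compatibility under the action, and the morphism correspondence) are set up correctly and agree with the paper.
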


The category inclusions from Proposition~\ref{prop:CrModShCatIso} are realized by taking $f = \Id_M$ or, respectively, $M' = M$ and $f = \Id_M$.

\begin{proof}
One follows the proof of Proposition~\ref{prop:CrModShCatIso}, treating braided comodules over $(R;\sigma_{coAss})$ with more care. One sees that the $R$-coaction has to be of the form
\begin{align*}
m &\mapsto f(m) \times gr(m), &\text{or } \qquad m &\mapsto \sum\nolimits_{r \in R} f(m)_r \otimes r.
\end{align*} 
The compatibility relation~\eqref{eqn:gen_YD} is then translated into a list of requirements for the map~$f$ and for the behavior of the $R$-grading under the $S$-action.
\end{proof}

\begin{definition}\label{def:twisted}
The categories described in the proposition are denoted by $\MMM^{\operatorname{tw}}_{\bullet}(R,S)$, or $\MMM^{\operatorname{R; tw}}_{\bullet}(R,S)$ in the rack case. Their objects are called \emph{twisted representations} of the corresponding crossed module of shelves/ racks, and the maps~$f$ are referred to as the \emph{twisting maps}. 
\end{definition}

Proposition~\ref{prop:CatYDSR} thus establishes category equivalences
\begin{align*}
\MMM^{\operatorname{tw}}_{\Set}(R,S) &\simeq \YD_S^R, & \MMM^{\operatorname{tw}}_{\kk}(R,S) &\simeq  \YD_{\kk S}^{\kk R}.
\end{align*} 
In what follows we will freely switch between the generalized YD and the twisted viewpoints.

Using the $S$-equivariance relation~\eqref{eqn:CrModSh'} for~$\pi$, one readily checks condition~\eqref{eqn:cond_pi} with $ \alpha_2 = \gamma_1 = \gamma_2 = 1$ and $\alpha_1 = 0$ (observe that $\sigma_{A,A}$ is in general not idempotent in this setting, and the choice $\alpha_1 = 1$ from the previous examples would not work; cf. Remark~\ref{rmk:idempot}). Theorem~\ref{thm:genYD_br} is thus applicable here, yielding
 
\begin{theorem}[Representations of a crossed module of shelves are braided]\label{thm:SrModShBr} 

$\quad$ Any representations $(M_i, \mop_i, gr_i)$
 of a crossed module of shelves $(R,S,\pi,\cdot)$ in~$\Set$ (where $gr_i \colon M_i \to R$ are the grading maps) form a braided system, with the braidings
\begin{align}\label{eqn:SigmaCrModSh}
\sigma_{CrModSh}(m \otimes n)& = n \otimes m \mop_i \pi(gr_j(n))
\end{align}
on $M_i \otimes M_j$. Similar braidings exist for representations in~$\Vect$.
\end{theorem}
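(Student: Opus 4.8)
The plan is to exhibit these braidings as a direct instance of the generalized YD braiding of Theorem~\ref{thm:genYD_br}. Proposition~\ref{prop:CrModShBrSyst} equips $(C,A) = (R,S)$ with a rank~$2$ braided system, and Proposition~\ref{prop:CrModShCatIso} realizes each representation $(M_i,\mop_i,gr_i)$ as a Yetter-Drinfel$'$d module over it, with action $\rho_i = \mop_i$ and coaction $\delta_i(m) = m \otimes gr_i(m)$. Taking $\pi \colon R \to S$ as the connecting morphism, the only substantive task is to check the technical condition~\eqref{eqn:cond_pi} for the parameters $\alpha_1 = 0$, $\alpha_2 = \gamma_1 = \gamma_2 = 1$; Theorem~\ref{thm:genYD_br} then applies verbatim and produces the braided system, after which it remains only to unwind its braiding formula~\eqref{eqn:br_genYD} into~\eqref{eqn:SigmaCrModSh}.

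For the verification I would evaluate both composites in~\eqref{eqn:cond_pi} on a generic triple $r \otimes r' \otimes r''$ in $C \otimes C \otimes C = R \times R \times R$, using the explicit set-theoretic formulas $\sigma_{C,C}(r \otimes r') = r' \otimes r'$, $\sigma_{C,A}(r \otimes s) = s \otimes (r \cdot s)$, $\sigma_{A,A}(s \otimes s') = s' \otimes (s \op s')$, and the plain flips for the symmetric braiding~$c$. Tracing the left-hand side, the copy of $\sigma_{C,C}$ first duplicates $r''$, the entwining component $\sigma_{C,A}$ then feeds $\pi(r'')$ into the action slot, and after the last relabelling one arrives at $\pi(r'') \otimes \pi(r'') \otimes \pi(r \cdot \pi(r''))$; here $\alpha_1 = 0$ leaves this third factor untouched. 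Tracing the right-hand side in the same manner gives $\pi(r'') \otimes \pi(r'') \otimes (\pi(r) \op \pi(r''))$. The two outputs agree in their first two slots automatically, so the whole condition collapses to the single identity $\pi(r \cdot \pi(r'')) = \pi(r) \op \pi(r'')$ in the last slot. This is exactly the $S$-equivariance~\eqref{eqn:CrModSh'} of $\pi$ specialized to $s = \pi(r'')$ (equivalently, the statement that $\pi$ is a shelf morphism, via~\eqref{eqn:CrModSh}), so~\eqref{eqn:cond_pi} holds.

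It then remains to read off the braiding. Feeding $m \otimes n$ into~\eqref{eqn:br_genYD}, the layer $M_i \otimes \delta_j$ produces $m \otimes n \otimes gr_j(n)$, the flip-and-$\pi$ layer $c_{M_i,M_j} \otimes \pi$ produces $n \otimes m \otimes \pi(gr_j(n))$, and the action layer $M_j \otimes \rho_i$ produces $n \otimes (m \mop_i \pi(gr_j(n)))$, which is precisely~\eqref{eqn:SigmaCrModSh}. The linear statement follows by applying the same argument to the linearized braided system of Proposition~\ref{prop:CrModShBrSyst}. I expect no genuine obstacle: the entire content is the bookkeeping of the second paragraph. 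The one point that demands care is the exponent $\alpha_1 = 0$: since $\sigma_{A,A} = \sigma_{SD}$ is \emph{not} idempotent in this setting, Remark~\ref{rmk:idempot} does not license enlarging the $\alpha$'s, and inserting an extra self-distributivity crossing on the left would break the equality, so the parameters must be pinned down exactly as stated.
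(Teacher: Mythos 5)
Your proposal is correct and follows exactly the paper's route: the paper likewise obtains the theorem by invoking Theorem~\ref{thm:genYD_br} for the braided system of Proposition~\ref{prop:CrModShBrSyst}, verifying condition~\eqref{eqn:cond_pi} with $\alpha_2=\gamma_1=\gamma_2=1$ and $\alpha_1=0$ via the $S$-equivariance~\eqref{eqn:CrModSh'} of~$\pi$, and it makes the same observation that $\sigma_{A,A}=\sigma_{SD}$ is not idempotent so $\alpha_1=1$ would fail. The only difference is that you carry out the elementwise check of~\eqref{eqn:cond_pi} explicitly where the paper says ``one readily checks,'' and your computation is accurate.
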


\begin{example}\label{ex:CrModGrAsSh'}
In the settings of Example~\ref{ex:CrModGrAsR}, one recovers Bantay's braiding~$\sigma_{CrMod}$ for crossed modules of groups (see~\eqref{eqn:br_CrMod} for the definition).
\end{example}

\begin{example}\label{ex:CrModGrAsSh''}
In the settings of Example~\ref{ex:CrModRAsGr}, the braidings~$\sigma_{CrModSh}$ and~$\sigma_{CrMod}$ for a representation of a crossed module of racks considered in the categories $\MMM^{\operatorname{R}}_{\bullet}(R,S)$ and, respectively, $\MMM_{\bullet}(Ass(R),Ass(S))$ (via the functor~\eqref{eqn:CrModRAsGr}) coincide.
\end{example}

\begin{example}\label{ex:AdjRep'}
For a shelf~$(S,\op)$ seen as the adjoint representation of $(S, S,$ $\Id_S, \op)$ 
(Example~\ref{ex:AdjRep}), $\sigma_{gYD}$ is the usual self-distributivity braiding~$\sigma_{SD}$ from~\eqref{eqn:br_SD}. More generally, a crossed module of shelves $(R,S,\pi,\cdot)$ has a representation~$(R,\cdot,\Id_R)$, for which $\sigma_{gYD}$ recovers once again the self-distributivity braiding~$\sigma_{SD}$.
\end{example}

\begin{remark}\label{rmk:BrYDSR}
For twisted representations $(M_i, \mop_i, gr_i,f_i) \in \MMM^{\operatorname{tw}}_{\bullet}(R,S)$, Theorem~\ref{thm:genYD_br} yields the braidings
\begin{align*}
\sigma_{TwCrModSh}(m \otimes n)& = f_j(n) \otimes m \mop_i \pi(gr_j(n))
\end{align*}
on $M_i \otimes M_j$, and similar formulas in the linear setting. They can be regarded as the braidings~\eqref{eqn:SigmaCrModSh} with extra ``$f$-twists''.
\end{remark}

\section{Representations of a crossed module of Leibniz algebras}\label{sec:RepsCrModLA}

In this section we recall the notion of crossed module of Leibniz algebras (cf. Example~\ref{ex:Lei}) and interpret it in terms of a rank~$2$ braided system. YD modules over such a system are then natural candidates for being called representations of the corresponding crossed module. We describe them explicitly, and endow them with braidings, supplied as usual by Theorem~\ref{thm:genYD_br}. This yields a new source of braidings, comprising~$\sigma_{Lei}$ from Example~\ref{ex:Lei}. Here we work in $\Vect$ for simplicity, but everything remains valid in a general symmetric additive monoidal category.

\begin{definition}[Crossed modules of Leibniz algebras]\label{def:CrModLA}
$ $

\begin{itemize}
\item A \emph{unital Leibniz algebra morphism} between unital Leibniz algebras $(\k, \,[,]_{\k}\, , 1_{\k})$ and $(\g, \,[,]_{\g}\, , 1_{\g})$ is a linear map $f \colon \k \to \g$ intertwining their structures:
\begin{align*}
f([k,\wk]_{\k}) &= [f(k),f(\wk)]_{\g}, & f(1_{\k}) &= 1_{\g}.
\end{align*}
\item A \emph{derivation} of a unital Leibniz algebra $(\k, \,[,]_{\k}\, , 1_{\k})$ is a linear map $f \colon \k \to \k$ satisfying
\begin{align*}
f([k,\wk]_{\k}) &= [k,f(\wk)]_{\k} + [f(k),\wk]_{\k}, & f(1_{\k}) &= 0.
\end{align*}
\item A \emph{representation} of $(\k, \,[,]_{\k}\, , 1_{\k})$ is a vector space~$M$ together with a unital Leibniz algebra morphism $\varphi \colon \k \to \End_{\kk}(M)$ (cf. Example~\ref{ex:Lei} for the Leibniz structure on~$\End_{\kk}(M)$). One says that~$\k$ \emph{acts on~$M$}, and writes $m \cdot k = \varphi(k)(m)$.
\item All the definitions above admit obvious non-unital versions.
\item A \emph{crossed module of Leibniz algebras} is the data of a Leibniz algebra morphism $\pi \colon \k \to \g$ and a (right) $\g$-action~$\cdot$ on~$\k$ by derivations, compatible in the sense of
\begin{align}
k \cdot \pi(\wk) &= [k,\wk]_{\k}, & &k,\wk \in \k, \label{eqn:CrModLA}\\
\pi (k \cdot g) &= [\pi(k), g]_{\g}, & &k \in \k,\, g \in \g. \label{eqn:CrModLA'}
\end{align}
\end{itemize}
\end{definition}

The simplest examples of crossed modules of Leibniz algebras are:
\begin{itemize}
\item the identity map $\Id_{\k} \colon \k \to \k$ for a Leibniz algebra~$\k$, with the adjoint action $k \cdot \wk = [k,\wk]_{\k}$, and
\item the zero map $0\colon \k \to \g$ between an abelian Leibniz algebra~$\k$ (i.e., the bracket~$[,]_{\k}$ is zero) and an arbitrary Leibniz algebra~$\g$ acting on~$\k$. 
\end{itemize}

Our definition of crossed modules is an anti-symmetric version of the Loday-Pirashvili one~\cite{LoPi}: they make~$\g$ act on~$\k$ on the left and on the right, with additional compatibility conditions, whereas we restrict ourselves to trivial left actions. 

\begin{remark}[An alternative definition]\label{rmk:AltCrModLA}
$ $

Similarly to the case of shelves, the definition of a crossed module of Leibniz algebras is redundant: it suffices to have a Leibniz algebra~$\g$ acting on a vector space~$\k$, and a $\g$-equivariant map $\pi \colon \k \to \g$ (in the sense of~\eqref{eqn:CrModLA'}). Relation~\eqref{eqn:CrModLA} then defines a Leibniz structure on~$\k$, on which $\g$ acts by derivations, and~$\pi$ becomes a Leibniz algebra morphism.
\end{remark}

It is natural to ask how to define crossed modules for unital Leibniz algebras. The naive definition does not work: condition~\eqref{eqn:CrModLA} implies
$$k = k \cdot 1_{\g} = k \cdot \pi(1_{\k}) = [k,1_{\k}]_{\k} = 0$$
for all $k \in \k$, so this definition is empty. However, the unitality is essential for a braided interpretation of crossed modules: the braiding~$\sigma_{Lei}$ encoding the Leibniz relation does involve the unit. The following classical construction provides a switch between non-unital and unital settings:

\begin{lemma}[Unitarization]\label{l:Unit}
$ $

Take a Leibniz algebra $(\k, \,[,]_{\k}\, )$ in $\Vect$.

\begin{enumerate}
\item A unital Leibniz algebra structure on~$\kp = \k \oplus \kk1$ is defined via
\begin{align*}
[k,\wk]_{\kp} &= [k,\wk]_{\k}, & [k,1]_{\kp} = [1,\wk]_{\kp} &= 0, &k,\wk \in \k.
\end{align*}
\item A cocommutative coassociative counital algebra structure on~$\kp$ is defined by putting
\begin{align*}
\Delta(k) &= k \otimes 1 + 1 \otimes k, & \varepsilon(k) &= 0, & k\in \k,\\
\Delta(1) &= 1 \otimes 1, &\varepsilon(1) &=1.&
\end{align*}
\item A Leibniz algebra morphism $f \colon \k \to \g$ extends to a unital Leibniz algebra morphism $f \colon \kp \to \gp$ by putting $f(1) = 1$.
\item A derivation~$f$ of a Leibniz algebra~$\k$ extends to a derivation of~$\kp$ via $f(1) = 0$. 
\item A $\k$-action~$\cdot$ on a vector space~$M$ extends to a $\kp$-action via $m \cdot 1 = m$.
\end{enumerate}

Take now a crossed module of Leibniz algebras $(\k,\g,\pi,\cdot)$. Consider the adjoint action $k \cdot \wk = [k,\wk]_{\k}$ of~$\k$ on itself. Extend it first into an action of~$\k$ on~$\kp$ by derivations, and then into an action of~$\kp$ on~$\kp$ as explained above. Explicitly, put $k \cdot 1 = k$, $1 \cdot k = \varepsilon(k)1$, $k \in \kp$. Similarly, unitarize the adjoint action of~$\g$ on itself and the $\g$-action on~$\k$ from the crossed module structure. Denote by~$\cdot$ all these unitarized actions. Further, extend the connecting map~$\pi$ into $\pi \colon \kp \to \gp$. Then one has
\begin{align}
k \cdot \pi(\wk) &= k \cdot \wk, & &k,\wk \in \kp, \label{eqn:CrModLAUn}\\
\pi (k \cdot g) &= \pi(k) \cdot g, & &k \in \kp,\, g \in \gp, \label{eqn:CrModLAUn'}\\
\pi (k \cdot \wk) &= \pi(k) \cdot \pi(\wk), & &k,\wk \in \kp, \label{eqn:CrModLAUn'''}\\
\Delta \circ \pi &= (\pi \otimes \pi) \circ \Delta \colon \kp \to \gp \otimes \gp.\label{eqn:CrModLAUn''}
\end{align}
\end{lemma}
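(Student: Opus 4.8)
The plan is to separate the statement into two layers: the structural extensions of items~(1)--(5), and the four compatibility identities~\eqref{eqn:CrModLAUn}--\eqref{eqn:CrModLAUn''}. The unifying principle behind every verification is that the adjoined unit~$1$ is designed to be \emph{inert}: it is annihilated by all brackets and derivations, while as an \emph{acting} element it acts as the identity. Concretely I would first record the conventions $[k,1]_{\kp}=[1,k]_{\kp}=0$, $k\cdot 1 = k$, $1\cdot k = \varepsilon(k)1$, $\pi(1)=1$, together with the derived relation $\varepsilon\circ\pi=\varepsilon$ (which holds because $\pi(\k)\subseteq\g$ forces $\varepsilon(\pi(k))=0=\varepsilon(k)$ for $k\in\k$, while $\varepsilon(\pi(1))=\varepsilon(1)$). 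These are then applied mechanically throughout.

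For items~(1)--(5) I would argue that they are routine. The Leibniz identity for~$[,]_{\kp}$ is checked by cases on how many of the three arguments equal~$1$: if at least one does, both sides vanish since the bracket kills~$1$, and the remaining case is the Leibniz identity for~$[,]_{\k}$; the Lie unit condition is immediate. The coalgebra structure on~$\kp$ is the standard primitively-generated cocommutative coassociative counital one, so coassociativity, counitality and cocommutativity follow by direct computation on~$k\in\k$ and on~$1$. Items~(3)--(5) assert nothing beyond the stated formulas $f(1)=1$, $f(1)=0$, $m\cdot 1 = m$, each of which visibly preserves the relevant structure (morphism, derivation, module).

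The substance is then the four identities, each of which I would prove by the same case split according to whether each of $k,\wk$ (or~$g$) lies in the non-unital part or equals~$1$. On the ``purely non-unital'' stratum the identities reduce exactly to the crossed module hypotheses: \eqref{eqn:CrModLAUn} becomes~\eqref{eqn:CrModLA}; \eqref{eqn:CrModLAUn'} becomes~\eqref{eqn:CrModLA'} combined with the definition of the adjoint $\g$-action; \eqref{eqn:CrModLAUn'''} becomes the statement that~$\pi$ is a Leibniz morphism; and \eqref{eqn:CrModLAUn''} becomes the primitivity of $\pi(k)\in\g$ together with $(\pi\otimes\pi)\Delta(k)=\pi(k)\otimes 1 + 1\otimes\pi(k)$. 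On every stratum involving~$1$ the conventions above collapse both sides to~$0$ or to the obvious value; for instance in~\eqref{eqn:CrModLAUn'''} with $k=1$, $\wk\in\k$, the left side is $\pi(1\cdot\wk)=\pi(\varepsilon(\wk)1)=0$ and the right side is $1\cdot\pi(\wk)=\varepsilon(\pi(\wk))1=0$ by $\varepsilon\circ\pi=\varepsilon$.

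I expect no genuine mathematical obstacle; the only real difficulty is bookkeeping. The symbol~$\cdot$ denotes several distinct unitarized actions (the $\kp$-action on~$\kp$, the $\gp$-action on~$\gp$, and the $\gp$-action on~$\kp$), all produced by the same two-step unitarization, and one must apply the correct convention to each side of each identity. Once these actions are tabulated, each of the few cases per identity is a one-line check, so the main care is to ensure the case analysis is exhaustive and that the right-handed placement of the acting element is respected throughout.
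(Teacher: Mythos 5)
Your proposal is correct: the paper itself omits the proof, stating only that it is straightforward, and your case split on whether each argument lies in the non-unital part or equals the adjoined unit (reducing each stratum either to a crossed-module axiom, to $\pi$ being a Leibniz morphism, or to a collapse to $0$ via the inertness conventions and $\varepsilon\circ\pi=\varepsilon$) is exactly the routine verification the authors had in mind. The only point worth adding is the caveat the paper records immediately after the lemma — the unitarized $\kp$-action on $\kp$ is no longer an action by derivations, since the action of $1$ is the identity rather than a derivation — but this does not affect any of the four identities you check.
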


The proof is straightforward. The comultiplication~$\Delta$ previously appeared in~\cite{CatSelfDistr,Lebed1}. Note that if a non-abelian Leibniz algebra~$\g$ carries a $\k$-action by derivations, the extended $\kp$-action from the lemma is no longer by derivations: the action by~$1$ behaves in the wrong way.

\begin{notation}
We use the same notation for a Leibniz algebra morphism / a derivation / an action and their unitarized versions from the lemma.
\end{notation}

\begin{proposition}[Crossed modules of Leibniz algebras as braided systems]\label{prop:CrModLABrSyst}
$ $ 
For a crossed module of Leibniz algebras $(\k,\g,\pi,\cdot)$, the following data define a rank $2$ braided system in~$\Vect$:
\begin{itemize}
\item as components, take $C=\kp$ and $A=\gp$;
\item the braiding is defined by
\begin{align*}
\sigma_{C,C} = \sigma_{coAss} \colon & 1 \otimes 1 \mapsto 1 \otimes 1, \quad 1 \otimes k \mapsto 1 \otimes k + k \otimes 1, \\
& k \otimes \wk \mapsto 0, \quad k \in \k, \wk \in \kp,\\
\sigma_{A,A} = \sigma_{Lei} \colon & g \otimes \wg \mapsto \wg \otimes g + 1 \otimes [g, \wg], \quad g,\wg \in \gp,\\
\sigma_{C,A} \colon & k \otimes g \mapsto g \otimes k \quad \text{if } k=1 \text{ or } g=1,\\ 
& k \otimes g \mapsto g \otimes k + 1 \otimes k \cdot g, \quad k \in \k, g \in \g.
\end{align*} 
\end{itemize}
\end{proposition}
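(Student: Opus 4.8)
The plan is to verify the four instances of the colored Yang-Baxter equation \eqref{eqn:cYBE} that are required for $(C,A;\osigma)$ with $C=\kp$, $A=\gp$ to be a rank $2$ braided system, namely the cYBE on $C\otimes C\otimes C$, on $A\otimes A\otimes A$, on $C\otimes C\otimes A$, and on $C\otimes A\otimes A$. The first two are handled entirely by the earlier examples: by Example~\ref{ex:coAss} the cYBE on $C\otimes C\otimes C$ is equivalent to the coassociativity of the comultiplication~$\Delta$ on~$\kp$ defined in Lemma~\ref{l:Unit}(2), and by Example~\ref{ex:Lei} the cYBE on $A\otimes A\otimes A$ is equivalent to the Leibniz condition for $[,]_{\gp}$, which holds since $\gp$ is a unital Leibniz algebra. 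So no new work is needed on the ``pure'' faces of the cube; the content is in the two ``mixed'' faces involving the entwining map $\sigma_{C,A}$.

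For the mixed verifications I would work directly in $\Vect$ with the explicit formulas, splitting each computation according to whether the tensor factors are the adjoined unit~$1$ or lie in~$\k$ (resp.~$\g$), since $\sigma_{C,A}$, $\sigma_{C,C}$, and $\sigma_{A,A}$ all have case-dependent behaviour on the unit. On $C\otimes C\otimes A$ the cYBE reads
\begin{align*}
(\sigma_{C,A}\otimes C)\circ(C\otimes\sigma_{C,A})\circ(\sigma_{C,C}\otimes A)
&=(A\otimes\sigma_{C,C})\circ(\sigma_{C,A}\otimes C)\circ(C\otimes\sigma_{C,A}),
\end{align*}
and I expect this to encode precisely the requirement that~$\g$ act on~$\k$ \emph{by derivations}: the two unit-terms produced by $\sigma_{C,C}$ on the coalgebra side must match the Leibniz-type term $1\otimes k\cdot g$ appearing when the $\g$-element passes a coproduct $\Delta(k)=k\otimes1+1\otimes k$. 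Dually, on $C\otimes A\otimes A$ the cYBE should reduce to the statement that $\cdot$ is an action (i.e.\ $(k\cdot g)\cdot\wg = k\cdot[g,\wg]_{\gp}$ in the appropriate anti-symmetric sense), with the correction term $1\otimes[g,\wg]$ coming from $\sigma_{Lei}$ exactly cancelling against the second branch of $\sigma_{C,A}$. In both cases the compatibility relations~\eqref{eqn:CrModLA} and~\eqref{eqn:CrModLA'}, in their unitarized forms~\eqref{eqn:CrModLAUn}--\eqref{eqn:CrModLAUn''} from Lemma~\ref{l:Unit}, are the algebraic identities that make the two sides agree.

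The main obstacle I anticipate is purely bookkeeping rather than conceptual: because every structure map treats the unit~$1$ as an exception, each of the two mixed cYBEs fragments into several cases depending on which of the three inputs equal~$1$, and the nontrivial case (all three inputs in~$\k$ or~$\g$) produces sums of several terms living in different gradings with respect to the number of surviving ``genuine'' algebra elements. I would organize this by checking that the terms with no unit, one unit, and two units match separately on each side; the delicate point flagged already in Lemma~\ref{l:Unit} is that the unitarized $\g$-action on~$\kp$ is \emph{not} by derivations once the unit is involved, so I must make sure the case analysis only ever uses the derivation property on honest elements of~$\k$ and~$\g$, where~\eqref{eqn:CrModLA'} genuinely applies. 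Given the diagrammatic tools the paper emphasizes, an alternative and cleaner route would be to observe (in the spirit of the remark following Proposition~\ref{prop:CrModShBrSyst}) that all three braiding components arise as restrictions of a single braiding built from the Leibniz/coalgebra data on~$\kp\oplus\gp$, but I expect the authors to favour the direct case check, so I would present that as the primary argument and keep the unification only as a sanity check.
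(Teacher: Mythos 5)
Your overall strategy is the one the paper follows: the cYBE on $C\otimes C\otimes C$ and on $A\otimes A\otimes A$ is delegated to Examples~\ref{ex:coAss} and~\ref{ex:Lei}, and the two mixed instances are checked in $\Vect$ by a case analysis on which tensor factors equal the adjoined unit. However, you misidentify which crossed-module axioms carry the load, and on one of the two mixed faces your prediction is simply false. The braided system of Proposition~\ref{prop:CrModLABrSyst} does not involve the connecting map~$\pi$ at all, nor the Leibniz bracket on~$\k$: the component $\sigma_{C,C}$ only sees the coalgebra structure of~$\kp$ from Lemma~\ref{l:Unit}. Consequently the compatibility relations \eqref{eqn:CrModLA}--\eqref{eqn:CrModLA'} (equivalently \eqref{eqn:CrModLAUn}--\eqref{eqn:CrModLAUn''}) cannot be ``the algebraic identities that make the two sides agree''; they enter only later, when verifying condition~\eqref{eqn:cond_pi} for Theorem~\ref{thm:SrModLABr}. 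Likewise the cYBE on $C\otimes C\otimes A$ cannot encode the derivation property of the $\g$-action, since that property relates the action to the bracket on~$\k$, which is absent from all three structure maps. If you carry out the case check you will find that both sides of the cYBE on $C\otimes C\otimes A$ agree identically --- e.g.\ both equal $g\otimes(1\otimes k+k\otimes 1)+1\otimes(1\otimes k\cdot g+k\cdot g\otimes 1)$ on $1\otimes k\otimes g$ and vanish on $k\otimes\wk\otimes g$ for $k\in\k$ --- for an arbitrary unitarized linear map $\cdot\colon\kp\otimes\gp\to\kp$, with no hypothesis used; this is exactly how the paper presents it.

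The only substantive identity in the whole proposition is the one you correctly anticipate on $C\otimes A\otimes A$, namely $(k\cdot g)\cdot\wg=(k\cdot\wg)\cdot g+k\cdot[g,\wg]$ for $k\in\kp$ and $g,\wg\in\gp$, which is precisely the statement that the unitarization of~$\cdot$ is a $\gp$-action on~$\kp$ (Lemma~\ref{l:Unit}). So your computational plan would succeed, but the narrative attached to it should be corrected: the two pure faces use coassociativity and the Leibniz identity, one mixed face is a tautology, the other uses only the action axiom, and neither~$\pi$ nor the derivation property plays any role in this proposition.
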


\begin{remark}\label{rmk:AdjActionLA}
The unitarization procedure from Lemma~\ref{l:Unit} allows one to write the maps~$\sigma_{A,A}$ and~$\sigma_{C,A}$ in a uniform way: 
\begin{align*}
\sigma_{A,A} (\wg \otimes g) &= g_{(1)} \otimes \wg \cdot g_{(2)},  &g, \wg \in \gp,\\
\sigma_{C,A} (k \otimes g) &= g_{(1)} \otimes k \cdot g_{(2)}, &k \in \kp, g \in \gp,
\end{align*}
using Sweedler's notation $\Delta(g) = g_{(1)} \otimes g_{(2)}$.
\end{remark}

\begin{proof}
The cYBE on $C\otimes C \otimes C$ and $A\otimes A \otimes A$ are taken care of by Examples~\ref{ex:coAss} and~\ref{ex:Lei}. Both sides of the cYBE on $C\otimes C \otimes A$ equal
\begin{itemize}
\item $g \otimes 1 \otimes 1$ on $1 \otimes 1 \otimes g$, $g \in \gp$;
\item $1 \otimes (1 \otimes k + k \otimes 1)$ on $1 \otimes k \otimes 1$, $k \in \k$;
\item $g \otimes (1 \otimes k + k \otimes 1) + 1 \otimes (1 \otimes k \cdot g + k \cdot g \otimes 1)$ on $1 \otimes k \otimes g$, $k \in \k, g \in \g$;
\item $0$ on $k \otimes \wk \otimes g$, $k \in \k, \wk \in \kp, g \in \gp$.
\end{itemize}
The cYBE on $C\otimes A \otimes A$ is equivalent to
$$(k \cdot g) \cdot \wg = (k \cdot \wg) \cdot g + k \cdot [g,\wg], \qquad k \in \kp, g,\wg \in \gp,$$
which follows from the fact that the unitarization of the $\g$-action~$\cdot$ on~$\k$ is a $\gp$-action on~$\kp$. 
\end{proof}

\begin{lemma}\label{l:YDforCrModLA}
Take a YD module $(M,\ast,\delta)$ over the braided system above. Recall Sweedler's notations $\delta(m) = m_{(0)} \otimes m_{(1)}$, $(\delta \otimes \kp) \circ \delta(m) = m_{(0)} \otimes m_{(1)} \otimes m_{(2)}$. Consider also the map $f(m) = \varepsilon(m_{(1)})m_{(0)}$. Then one has the following relations:
\begin{align}
&(m \ast \wg) \ast g = (m \ast g_{(1)}) \ast (\wg \cdot g_{(2)}),  &&m \in M, \, g, \wg \in \gp,\label{eqn:YDforCrModLA}\\
&\delta(m \ast g) = m_{(0)} \ast g_{(1)} \otimes m_{(1)} \cdot g_{(2)},  &&m \in M, \, g \in \gp,\label{eqn:YDforCrModLA'}\\
& m_{(0)} \otimes m_{(1)} \otimes m_{(2)} = f(m_{(0)}) \otimes \Delta(m_{(1)}),  &&m \in M, \label{eqn:YDforCrModLA'''}\\
&(m \ast 1) \ast g = (m \ast g) \ast 1,  &&m \in M, \, g \in \gp,\label{eqn:YDforCrModLA''}\\
& f(m)_{(0)} \otimes f(m)_{(1)} = f(m_{(0)}) \otimes m_{(1)},  &&m \in M. \label{eqn:YDforCrModLA4}
\end{align}
\end{lemma}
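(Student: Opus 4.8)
The plan is to observe that none of the five identities requires genuine work: three of them are verbatim transcriptions of the three structural axioms carried by a YD module over the braided system of Proposition~\ref{prop:CrModLABrSyst}, and the other two are one-line consequences obtained by feeding in the counit and the unitarization conventions of Lemma~\ref{l:Unit}. Accordingly I would not expect any hard calculation; the entire difficulty is notational, namely keeping the single and the iterated coaction Sweedler indices apart.

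First I would dispatch the two identities that transcribe, respectively, the module axiom and the action--coaction compatibility. Relation~\eqref{eqn:YDforCrModLA'} is exactly the YD compatibility~\eqref{eqn:gen_YD}: substituting $\sigma_{C,A}(k\otimes g)=g_{(1)}\otimes k\cdot g_{(2)}$ (Remark~\ref{rmk:AdjActionLA}) into $\delta\circ\rho=(\rho\otimes C)\circ(M\otimes\sigma_{C,A})\circ(\delta\otimes A)$ and reading both sides off in Sweedler notation gives $\delta(m\ast g)=m_{(0)}\ast g_{(1)}\otimes m_{(1)}\cdot g_{(2)}$. In the same way~\eqref{eqn:YDforCrModLA} is the braided module axiom~\eqref{eqn:BrMod} for the single object $(\gp;\sigma_{A,A})$: plugging the compact form $\sigma_{A,A}(\wg\otimes g)=g_{(1)}\otimes\wg\cdot g_{(2)}$ into $\rho\circ(\rho\otimes A)=\rho\circ(\rho\otimes A)\circ(M\otimes\sigma_{A,A})$ yields $(m\ast\wg)\ast g=(m\ast g_{(1)})\ast(\wg\cdot g_{(2)})$.

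Next I would treat~\eqref{eqn:YDforCrModLA'''}, which is the only step needing attention. It is the braided comodule axiom for $(\kp;\sigma_{C,C})$, i.e. the arrow-reversal of~\eqref{eqn:BrMod}, which reads $(\delta\otimes C)\circ\delta=(M\otimes\sigma_{C,C})\circ(\delta\otimes C)\circ\delta$. Applying $\sigma_{C,C}=\sigma_{coAss}$, given by $v\otimes\wv\mapsto\varepsilon(v)\Delta(\wv)$, to the last two tensor slots of $m_{(0)}\otimes m_{(1)}\otimes m_{(2)}$ produces $\varepsilon(m_{(1)})m_{(0)}\otimes\Delta(m_{(2)})$; the key observation is that, in terms of the single coaction, $\varepsilon(m_{(1)})m_{(0)}$ is precisely $f$ applied to the inner coaction factor, so that the right-hand side collapses to $f(m_{(0)})\otimes\Delta(m_{(1)})$, which is~\eqref{eqn:YDforCrModLA'''}. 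Here is where I must be scrupulous about distinguishing the indices of $\delta$ from those of $(\delta\otimes C)\circ\delta$ and about matching the combination produced by $\sigma_{coAss}$ against the definition $f(m)=\varepsilon(m_{(1)})m_{(0)}$; this bookkeeping is the sole (mild) obstacle in the proof.

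Finally the remaining two identities are formal corollaries. For~\eqref{eqn:YDforCrModLA''} I would set $\wg=1$ in~\eqref{eqn:YDforCrModLA}: the right-hand side becomes $(m\ast g_{(1)})\ast(1\cdot g_{(2)})$, and since the unitarized adjoint action satisfies $1\cdot g_{(2)}=\varepsilon(g_{(2)})1$ (Lemma~\ref{l:Unit}), the counit axiom $g_{(1)}\varepsilon(g_{(2)})=g$ reduces it to $(m\ast g)\ast1$, giving $(m\ast1)\ast g=(m\ast g)\ast1$. For~\eqref{eqn:YDforCrModLA4} I would compute $\delta(f(m))=\varepsilon(m_{(1)})\delta(m_{(0)})$, recognise the right-hand side as the iterated coaction with a counit applied to the outermost factor, and then use~\eqref{eqn:YDforCrModLA'''} together with the counit axiom to rewrite it as $f(m_{(0)})\otimes m_{(1)}$; informally this simply records that $f$ is a comodule endomorphism. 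As these are two short applications of the counit, no further difficulty arises.
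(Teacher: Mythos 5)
Your proposal is correct and follows exactly the paper's own (much terser) argument: the first three identities are the braided module axiom, the YD compatibility, and the braided comodule axiom written with the compact braiding components of Remark~\ref{rmk:AdjActionLA}, while \eqref{eqn:YDforCrModLA''} comes from setting $\wg=1$ in \eqref{eqn:YDforCrModLA} and \eqref{eqn:YDforCrModLA4} from applying $\varepsilon$ to the last tensor factor of \eqref{eqn:YDforCrModLA'''}. The index bookkeeping you flag (matching $\varepsilon(m_{(1)})m_{(0)}\otimes\Delta(m_{(2)})$ in iterated-coaction indices against $f(m_{(0)})\otimes\Delta(m_{(1)})$ in single-coaction indices) is handled correctly.
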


\begin{proof}
The first three equations are the defining relations of generalized YD modules, with the braiding components written as suggested in Remark~\ref{rmk:AdjActionLA}. The penultimate relation follows from the first one by taking $\wg = 1$, and the last one from~\eqref{eqn:YDforCrModLA'''} by applying~$\varepsilon$ to the last component.
\end{proof}

We now propose a notion of representation of a crossed module of Leibniz algebras. It is tailored for admitting an interpretation in terms of generalized YD modules.

\begin{definition}[Representations of a crossed module of Leibniz algebras]\label{def:CrModLARep}
$ $

A \emph{representation of a crossed module of Leibniz algebras} $(\k,\g,\pi,\cdot)$ is a vector space~$M$ endowed with a $\g$-action~$\ast$ and a linear map~$\delta_0 \colon M \to M \otimes \k$ which is
\begin{itemize}
\item of square zero, i.e. $(\delta_0 \otimes \k) \circ \delta_0 = 0$, and
\item $\g$-equivariant, in the sense of $\delta_0(m \ast g) = \delta_0(m) \ast g$, where $\g$ acts on $M \otimes \k$ according to the Leibniz rule: $(m \otimes k) \ast g = m \otimes k \cdot g + m \ast g \otimes k$.
\end{itemize}
The category of such representations (with the obvious notion of morphisms) is denoted by $\MMM(\k,\g,\pi,\cdot)$, or simply $\MMM(\k,\g)$.
\end{definition}

\begin{remark}
If $\k$ has a basis $k_i, i  \in I$, then the map~$\delta_0$ can be written as $\delta_0(m) = \sum_{i \in I} \theta_i(m) \otimes k_i$ for some linear maps $\theta_i \colon M \to M$. The square-zero property for~$\delta_0$ then reads $\theta_i \theta_j = 0$ for all $i,j \in I$. Moreover, in the finite-dimensional case, the $\g$-equivariance yields an expression of $\theta_i(m \ast g) - \theta_i(m) \ast g$ in terms of the $\theta$s and the structural constants of the action of~$\g$ on~$\k$.
\end{remark}

\begin{proposition}[Representations of a crossed module of Leibniz algebras as generalized YD modules]\label{prop:CrModLACatIso}
$ $

In the settings of the previous proposition, one has category inclusions
\begin{align*}
\MMM(\k,\g) & \hookrightarrow \YD_{\gp}^{\kp},\\
(M,\ast,\delta_0) &\mapsto (M,\ast,\delta),
\end{align*}
where the $\g$-action~$\ast$ on~$M$ is extended to a $\gp$-action as explained in Lemma~\ref{l:Unit}, and the $\kp$-coaction~$\delta$ is given by $\delta(m) = \delta_0(m) + m \otimes 1$.
\end{proposition}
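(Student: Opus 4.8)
The plan is to show that the triple $(M,\ast,\delta)$ with $\delta=\delta_0+(-\otimes 1)$ is an object of $\YD_{\gp}^{\kp}$ over the braided system of Proposition~\ref{prop:CrModLABrSyst}, and then that the assignment is injective on objects and fully faithful. By the remark in Lemma~\ref{l:YDforCrModLA}, being a YD module over this system amounts to the three relations \eqref{eqn:YDforCrModLA} (braided $\gp$-module), \eqref{eqn:YDforCrModLA'''} (braided $\kp$-comodule), and \eqref{eqn:YDforCrModLA'} (YD compatibility). I would verify these in turn for our data, writing $\delta_0(m)=\sum_i\theta_i(m)\otimes k_i$ with $k_i\in\k$, so that $\delta(m)=\sum_i\theta_i(m)\otimes k_i+m\otimes 1$.

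The module relation \eqref{eqn:YDforCrModLA} is essentially given: the $\g$-action $\ast$ comes from a unital Leibniz morphism $\varphi\colon\g\to\End_{\kk}(M)$, whose unital extension to $\gp$ is a braided module over $(\gp;\sigma_{Lei})$ by Example~\ref{ex:Lei}; the instances of the braided module axiom involving $1_{\gp}$ are automatic, since $[g,1]_{\gp}=[1,g]_{\gp}=0$ and $m\ast 1=m$.

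For the comodule relation \eqref{eqn:YDforCrModLA'''} I would first observe that, as $\varepsilon$ kills $\k$ and $\varepsilon(1)=1$, the associated twisting map is $f=(M\otimes\varepsilon)\circ\delta=\Id_M$. Expanding $(\delta\otimes\kp)\circ\delta(m)$ produces a ``$\theta_j\theta_i$'' term equal to $(\delta_0\otimes\k)\circ\delta_0(m)$, which vanishes by the square-zero hypothesis; what remains is $\sum_i\theta_i(m)\otimes(k_i\otimes 1+1\otimes k_i)+m\otimes 1\otimes 1$, which is exactly $(\Id\otimes\Delta)\circ\delta(m)=(f\otimes\Delta)\circ\delta(m)$. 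This is the step where the defining square-zero property of $\delta_0$ is used in an essential way.

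The remaining condition \eqref{eqn:YDforCrModLA'}, namely $\delta(m\ast g)=m_{(0)}\ast g_{(1)}\otimes m_{(1)}\cdot g_{(2)}$, is the main obstacle, as it is where the $\g$-equivariance of $\delta_0$ must be matched with $\sigma_{C,A}$ and where the unital conventions need care. For $g\in\g$ I would rewrite the left side via $\delta_0(m\ast g)=\delta_0(m)\ast g$ with $\g$ acting on $M\otimes\k$ by the Leibniz rule, and expand the right side using $\Delta(g)=g\otimes 1+1\otimes g$ together with $k\cdot 1=k$, $m\ast 1=m$, and crucially $1_{\kp}\cdot g=0$ (the derivation extension of Lemma~\ref{l:Unit} kills the unit); both sides then reduce to $\sum_i\bigl(\theta_i(m)\ast g\otimes k_i+\theta_i(m)\otimes k_i\cdot g\bigr)+(m\ast g)\otimes 1$, while $g=1_{\gp}$ is immediate. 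This establishes $(M,\ast,\delta)\in\YD_{\gp}^{\kp}$. Finally, for the inclusion of categories: $\delta_0$ is recovered as the $M\otimes\k$-component of $\delta$, so the functor is injective on objects, and a linear map intertwines the coactions $\delta,\delta'$ if and only if it intertwines $\delta_0,\delta_0'$ (the $\kk 1$-components agree automatically), the $\ast$-compatibility being the identical condition in both categories; hence the functor is fully faithful, giving $\MMM(\k,\g)\hookrightarrow\YD_{\gp}^{\kp}$.
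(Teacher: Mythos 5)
Your proposal is correct and follows essentially the same route as the paper: identify the $\gp$-action as a braided $(\gp;\sigma_{Lei})$-module, show that $\delta=\delta_0+(-\otimes 1)$ with $\delta_0$ square-zero is a (normalized) braided $(\kp;\sigma_{coAss})$-comodule, reduce the YD compatibility to the $\g$-equivariance of $\delta_0$, and recover $\ast$ and $\delta_0$ from the image to get injectivity and full faithfulness. The only difference is presentational — you spell out the computations in a basis and route them through the relations of Lemma~\ref{l:YDforCrModLA}, where the paper states the equivalences directly — so no further comparison is needed.
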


\begin{proof}
As recalled in Example~\ref{ex:Lei}, the $\gp$-action~$\ast$ on~$M$ is also a $(\gp;\sigma_{Lei})$-action. Further, one verifies that a linear map $\delta \colon M \to M \otimes \kp$ defines a $(\kp;\sigma_{coAss})$-coaction~$\delta$, normalized in the sense of $(M \otimes \varepsilon) \circ \delta = M$, if and only if it has the form $\delta(m) = \delta_0(m) + m \otimes 1$, with $\delta_0 \colon M \to M \otimes \k$ of square zero. At last, the YD property~\eqref{eqn:gen_YD} for our~$\sigma_{C,A}$ is equivalent to the $\g$-equivariance of~$\delta_0$. Thus the functor from the theorem is well defined on objects. One easily sees that it is well defined, full and faithful on morphisms. Finally, the $\g$-action on~$M$ can be restored from the $\gp$-action, and the map~$\delta_0$ from~$\delta$, hence our functor is indeed a category inclusion.
\end{proof}

As usual, one can interpret the whole category $\YD_{\gp}^{\kp}$ in terms of non-normalized representations; the details are left to the Reader.

Now, Theorem~\ref{thm:genYD_br} allows one to construct braidings:

\begin{theorem}[Representations of a crossed module of Leibniz algebras are braided]\label{thm:SrModLABr} $ $

Any representations $(M_i, \ast_i, (\delta_0)_i)$ 
of a crossed module of Leibniz algebras $(\k,\g,\pi,\cdot)$ form a braided system, with the braidings on $M_i \otimes M_j$ given by
\begin{align}\label{eqn:SigmaCrModLA}
\sigma_{CrModLA}(m \otimes n)& = n \otimes m + n_{(0)} \otimes m \ast_i \pi(n_{(1)}),
\end{align} 
using Sweedler's notation $(\delta_0)_j(n) = n_{(0)} \otimes n_{(1)}$.
\end{theorem}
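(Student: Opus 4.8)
The plan is to realize the statement as a direct application of Theorem~\ref{thm:genYD_br}. Proposition~\ref{prop:CrModLABrSyst} already supplies the rank~$2$ braided system $(C,A;\osigma)=(\kp,\gp;\osigma)$, and Proposition~\ref{prop:CrModLACatIso} identifies each representation $(M_i,\ast_i,(\delta_0)_i)$ with a Yetter--Drinfel$'$d module over it, the coaction being $\delta_i(m)=(\delta_0)_i(m)+m\otimes 1$. So the two things left to do are: check that the connecting map $\pi\colon\kp\to\gp$ satisfies the technical condition~\eqref{eqn:cond_pi}, and then unwind the abstract braiding~\eqref{eqn:br_genYD} into the explicit formula~\eqref{eqn:SigmaCrModLA}.

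For the technical condition I would take $\gamma_1=\gamma_2=\alpha_2=1$ and $\alpha_1=0$; exactly as in the shelf case (Theorem~\ref{thm:SrModShBr}), the value $\alpha_1=1$ is excluded because $\sigma_{A,A}=\sigma_{Lei}$ is not idempotent (cf. Remark~\ref{rmk:idempot}). I would then evaluate both sides of~\eqref{eqn:cond_pi} on a generic tensor $x\otimes y\otimes z\in(\kp)^{\otimes 3}$, writing $\sigma_{A,A}$ and $\sigma_{C,A}$ in the uniform form of Remark~\ref{rmk:AdjActionLA} and using $\sigma_{C,C}(y\otimes z)=\varepsilon(y)\,z_{(1)}\otimes z_{(2)}$. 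The three structural inputs are: the coalgebra-morphism property~\eqref{eqn:CrModLAUn''}, which pushes $\pi$ through a coproduct, i.e. $\Delta(\pi(z_{(2)}))=\pi(z_{(2)(1)})\otimes\pi(z_{(2)(2)})$; the relation $k\cdot\pi(\wk)=k\cdot\wk$ of~\eqref{eqn:CrModLAUn}; and the equivariance $\pi(k\cdot g)=\pi(k)\cdot g$ of~\eqref{eqn:CrModLAUn'}, used to peel off an outer $\pi$. A short computation then shows that, after using the cocommutativity of~$\kp$ to reorder the transposed Sweedler factors, both sides collapse to $\varepsilon(y)\,\pi(z_{(1)})\otimes\pi(z_{(2)})\otimes(\pi(x)\cdot\pi(z_{(3)}))$.

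With~\eqref{eqn:cond_pi} verified, Theorem~\ref{thm:genYD_br} immediately yields that the $M_i$ form a braided system, with braiding on $M_i\otimes M_j$ given by $\sigma_{gYD}(m\otimes n)=n_{(0)}\otimes m\ast_i\pi(n_{(1)})$, where the Sweedler indices now refer to the \emph{full} coaction $\delta_j(n)=(\delta_0)_j(n)+n\otimes 1$. Splitting this coaction into its two summands produces exactly two terms: the $n\otimes 1$ part contributes $n\otimes m\ast_i\pi(1)=n\otimes m\ast_i 1=n\otimes m$, using $\pi(1)=1$ and the unitality $m\ast_i 1=m$, while the $(\delta_0)_j$ part contributes $n_{(0)}\otimes m\ast_i\pi(n_{(1)})$ with $n_{(1)}\in\k$. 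Adding the two recovers~\eqref{eqn:SigmaCrModLA}.

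The main obstacle is the bookkeeping in the verification of~\eqref{eqn:cond_pi}: one must consistently distinguish the unit~$1$ from genuine $\k$-elements, keep track of which occurrence of~$\cdot$ is the $\gp$-action on~$\kp$ and which is the bracket on~$\gp$, and record precisely where the outer~$\pi$ sits before invoking~\eqref{eqn:CrModLAUn} and~\eqref{eqn:CrModLAUn'}. It is exactly the cocommutativity of the coalgebra~$\kp$ that lets the factors $\pi(z_{(2)})\otimes\pi(z_{(1)})$ produced on the left-hand side be matched with the factors $\pi(z_{(1)})\otimes\pi(z_{(2)})$ appearing on the right.
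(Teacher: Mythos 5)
Your proposal is correct and follows the paper's own route: the same choice $\alpha_1=0$, $\alpha_2=\gamma_1=\gamma_2=1$ in~\eqref{eqn:cond_pi}, verified using the cocommutativity of~$\kp$, the entwining relation~\eqref{eqn:CrModLAUn''} and the equivariance~\eqref{eqn:CrModLAUn'}, followed by the (implicit in the paper) unwinding of~\eqref{eqn:br_genYD} into the two-term formula~\eqref{eqn:SigmaCrModLA}. The only cosmetic differences are that you reduce both sides of~\eqref{eqn:cond_pi} to a common normal form rather than transforming one into the other, and that the relation~\eqref{eqn:CrModLAUn} you list among your inputs is not actually needed anywhere in the computation.
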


\begin{proof}
We will check the technical condition~\eqref{eqn:cond_pi} for our map~$\pi$, with $\alpha_2 = \gamma_1 = \gamma_2 = 1$ and $\alpha_1 = 0$. It reads
\begin{align*}
&\pi(\wk_{(2)}) \otimes \pi(\wk_{(1)}) \otimes \pi(k \cdot \pi(\wk_{(3)})) =\\ 
&\pi(\wk_{(1)}) \otimes (\pi(\wk_{(2)}))_{(1)} \otimes \pi(k) \cdot (\pi(\wk_{(2)}))_{(2)}
\end{align*}
for $k, \wk \in \kp$, using the usual Sweedler's notation for the comultiplications on~$\kp$ and on~$\gp$. Since these comultiplications are cocommutative and are entwined by~$\pi$ (relation~\eqref{eqn:CrModLAUn''}), it suffices to show that
\begin{align*}
\pi(k \cdot \pi(\wk)) &= \pi(k) \cdot \pi(\wk),
\end{align*}
which follows from~\eqref{eqn:CrModLAUn'}. 
\end{proof}

\begin{example}[Adjoint representations]\label{ex:AdjRepLA}
$ $

Recall that, for a Leibniz algebra~$\k$, the identity map $\Id_{\k} \colon \k \to \k$ and the adjoint action $k \cdot \wk = [k,\wk]_{\k}$ define a crossed module structure. Moreover, $\k$ itself with the map~$\delta_0$ and again the adjoint action~$\ast$ is a representation of this crossed module. Theorem~\ref{thm:SrModLABr} then endows~$\k$ with a braiding, which turns out to be the flip $k \otimes \wk \mapsto \wk \otimes k$. Further, $\kp$ with $\delta_0$ defined by $\delta_0(1) = 0$ and $\delta_0(k) = 1 \otimes k$ for $k \in \k$ is also a representation of this crossed module. The braiding recovered in this latter case is the Leibniz braiding~$\sigma_{Lei}$.
\end{example}

\section{Categorical aspects}\label{sec:AdjMod}

This section is devoted to a systematic construction of families of generalized YD modules, and to a study of the categories~$\YD_A^C$. We return here to the general setting of a strict monoidal category~$\C$.

First we describe a method for transforming generalized YD modules into more complicated ones.

\begin{proposition}[Enrichment of YD modules]\label{prop:AdjMod}
$ $

Take a YD module $(N,\rho,\delta)$ over a braided system $(C,A;\osigma)$ in~$\C$. Suppose that this system can be enriched into a rank~$3$ system $(C,M,A;$ $\osigma,\sigma_{C,M},$ 
$\sigma_{M,M},\sigma_{M,A})$. Then $N \otimes M$ can be endowed with the following YD module structure over $(C,A;\osigma)$ (Fig.~\ref{pic:enrichYD}):
\begin{align*}
\delta' &= (N \otimes \sigma_{C,M}) \circ (\delta \otimes M),&
\rho' &= (\rho \otimes M) \circ (N \otimes \sigma_{M,A}).
\end{align*}
\end{proposition}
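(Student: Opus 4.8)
The plan is to verify that the pair $(\delta',\rho')$ as defined satisfies the three defining axioms of a YD module over $(C,A;\osigma)$: first that $\rho'$ is a braided $(A;\sigma_{A,A})$-module structure (relation~\eqref{eqn:BrMod}), second that $\delta'$ is a braided $(C;\sigma_{C,C})$-comodule structure (the comodule analogue of~\eqref{eqn:BrMod}), and third the action-coaction compatibility~\eqref{eqn:gen_YD}. I would carry out all three checks diagrammatically, since the structures in play are by definition braided and the compatibility conditions bundled into the rank~$3$ hypothesis are exactly the instances of the cYBE~\eqref{eqn:cYBE} needed to slide strands past one another.

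First I would treat the module axiom for $\rho'$. Drawing $\rho' \circ (\rho' \otimes A)$ on $N\otimes M\otimes A\otimes A$, the $M$-strand must be commuted past the two $A$-strands using $\sigma_{M,A}$; the key input is the cYBE on the triple $M\otimes A\otimes A$ (available since $(M,A)$ sits inside the rank~$3$ system), which lets one reorganize the two $\sigma_{M,A}$-crossings and the braiding $\sigma_{A,A}$, after which the braided-module relation~\eqref{eqn:BrMod} for the original $\rho$ on the $N$-strand finishes the job. Dually, the comodule axiom for $\delta'$ follows symmetrically, using the cYBE on $C\otimes C\otimes M$ together with the comodule relation for the original $\delta$; this is the mirror image of the first computation, so once the module case is drawn, the comodule case is essentially free.

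The compatibility~\eqref{eqn:gen_YD} for $(\delta',\rho')$ is where I expect the real work to lie. One must show $\delta'\circ\rho' = (\rho'\otimes C)\circ(N\otimes M\otimes\sigma_{C,A})\circ(\delta'\otimes A)$ on $N\otimes M\otimes A$. Expanding both sides, the left-hand side applies $\sigma_{M,A}$ (inside $\rho'$) and then $\sigma_{C,M}$ (inside $\delta'$); the right-hand side threads a $C$-strand and an $A$-strand past the $M$-strand through the entwining $\sigma_{C,A}$. Reconciling these requires the \emph{entwining} compatibilities of the rank~$3$ system, namely the cYBE on the triples $C\otimes M\otimes A$ and $C\otimes A\otimes\cdots$, to commute $\sigma_{C,M}$ and $\sigma_{M,A}$ consistently with $\sigma_{C,A}$, and finally the YD compatibility~\eqref{eqn:gen_YD} for the original $(N,\rho,\delta)$ applied on the $N$-strand.

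The main obstacle is bookkeeping: there are three entwining braidings $\sigma_{C,M}$, $\sigma_{M,A}$, $\sigma_{C,A}$ plus the internal $\sigma_{C,C}$, $\sigma_{A,A}$, and one must apply exactly the right instance of the cYBE at each step so that the $M$-strand can be pulled through cleanly and the problem reduced to the already-known YD relation for $N$. The diagrammatic calculus of Fig.~\ref{pic:cYBE} and Fig.~\ref{pic:gen_YD} makes each move transparent, but assembling them in the correct order is the delicate point; I would let the promised figure (Fig.~\ref{pic:enrichYD}) encode the final picture and present the verification as a sequence of strand-slides justified by~\eqref{eqn:cYBE}, \eqref{eqn:BrMod}, and~\eqref{eqn:gen_YD}.
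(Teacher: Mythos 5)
Your proposal is correct and follows essentially the same route as the paper's proof: the module axiom via the cYBE on $M\otimes A\otimes A$ plus~\eqref{eqn:BrMod} for~$\rho$, the comodule axiom dually via the cYBE on $C\otimes C\otimes M$, and the compatibility~\eqref{eqn:gen_YD} via the cYBE on $C\otimes M\otimes A$ followed by the YD relation for $(N,\rho,\delta)$. The only minor remark is that for the last step the single instance $C\otimes M\otimes A$ of the cYBE suffices; no further entwining instances are needed.
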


\begin{center}
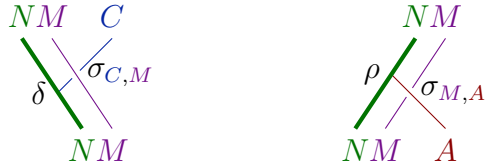

\begin{tikzpicture}[scale=0.4]
\draw [myblue] (1.19,1.19)--(1.67,1.67);
\draw [myblue] (1.93,1.93)--(3,3);
\draw [myolive, ultra thick] (2,0)--(0,3);
\draw [myviolet] (3,0)--(1,3);
\node at (2,0) [below,myolive] {$N$}; 
\node at (3,0) [below,myviolet] {$M$}; 
\node at (0,3) [above,myolive] {$N$}; 
\node at (1,3) [above,myviolet] {$M$}; 
\node at (3,3) [above,myblue] {$C$}; 
\node at (1.19,1.19) [left] {$\delta$}; 
\node at (1.8,1.8) [right] {$\sigma_{{\color{myblue} C},{\color{myviolet} M}}$}; 
\end{tikzpicture}
\hspace*{2cm}
\begin{tikzpicture}[scale=0.4]
\draw [myred] (3,0)--(1.19,1.81);
\draw [myolive, ultra thick] (0,0)--(2,3);
\draw [myviolet] (1,0)--(1.7,1.05);
\draw [myviolet] (1.9,1.35)--(3,3);
\node at (0,0) [below,myolive] {$N$}; 
\node at (1,0) [below,myviolet] {$M$}; 
\node at (2,3) [above,myolive] {$N$}; 
\node at (3,3) [above,myviolet] {$M$}; 
\node at (3,0) [below,myred] {$A$};
\node at (1.19,1.81) [left] {$\rho$}; 
\node at (1.8,1.2) [right] {$\sigma_{{\color{myviolet} M},{\color{myred} A}}$}; 
\end{tikzpicture}
\captionof{figure}{Enriched YD modules}\label{pic:enrichYD}
\end{center}

\begin{proof}
We have to show the braided module and comodule property for $N\otimes M$ and 
the Yetter-Drinfel$'$d property.
   
\noindent{\bf 1)} The claim of the braided module property is the equality of
$$(\rho\otimes M)\circ(N\otimes\sigma_{M,A})\circ(\rho\otimes M\otimes A)\circ
(N\otimes\sigma_{M,A}\otimes A)\circ(N\otimes M\otimes\sigma_{A,A})$$
and 
$$(\rho\otimes M)\circ(N\otimes\sigma_{M,A})\circ(\rho\otimes M\otimes A)
\circ(N\otimes\sigma_{M,A}\otimes A).$$
For this, one uses first Equation~\eqref{eqn:cYBE} for $M \otimes A\otimes A$, and then the defining property~\eqref{eqn:BrMod} for the braided $A$-module $(N,\rho)$.

\noindent{\bf 2)} One argues similarly for the braided comodule property (using~\eqref{eqn:cYBE} for 
$C\otimes C \otimes M$).
 
\noindent{\bf 3)}
The YD property~\eqref{eqn:gen_YD} reads
$$(N\otimes\sigma_{C,M})\circ(\delta\otimes M)\circ(\rho\otimes M)\circ
(N\otimes\sigma_{M,A})=$$
$$(\rho\otimes M\otimes C)\circ(N\otimes\sigma_{M,A}\otimes C)\circ
(N\otimes M\otimes\sigma_{C,A})\circ(N\otimes\sigma_{C,M}\otimes A)\circ
(\delta\otimes M\otimes A).$$
It follows from the cYBE for $C\otimes M\otimes A$, and then Equation~\eqref{eqn:gen_YD} for the YD module
$(N,\rho,\delta)$.

The reader is invited to draw the corresponding diagrams.    
\end{proof}

Note that the datum of~$\sigma_{M,M}$ is completely irrelevant for the YD module structure on $N \otimes M$, and can be replaced, for instance, with~$\Id_{M \otimes M}$ (which trivially forces all the instances of the cYBE involving at least two copies of~$M$). This motivates the following

\begin{definition}[Enriching structures]\label{def:center}
$ $

Take a braided system $(C,A;\osigma)$ in~$\C$. Denote by~$\ZZZ_A^C$ the category whose
\begin{itemize}
\item objects are the \emph{enriching structures} for $(C,A;\osigma)$, i.e., objects~$M$ together with morphisms~$\sigma_{C,M}$ and~$\sigma_{M,A}$ in~$\C$ such that $(C,M,A;$ $\osigma,\sigma_{C,M},\Id_{M \otimes M},\sigma_{M,A})$ is a braided system;
\item morphisms are those morphisms~$\varphi \colon M \to M'$ in~$\C$ which satisfy the naturality conditions
\begin{align}
(\varphi \otimes C) \circ \sigma_{C,M} &= \sigma_{C,M'} \circ (C \otimes \varphi),\label{eqn::Nat1}\\
(A \otimes \varphi) \circ \sigma_{M,A} &= \sigma_{M',A} \circ (\varphi \otimes A).\label{eqn::Nat2}
\end{align}
\end{itemize}
\end{definition}

This notion is related to the categorical center (hence the notation), and to factorisations of a distributive law, introduced by U.~Kr\"{a}hmer and P.~Slevin \cite{KraSle} as a tool for constructing new cyclic homology theories.

We now show that the category~$\ZZZ_A^C$ is far from being empty:

\begin{proposition}[Categorical aspects of the enrichment]\label{prop:center}
$ $

\begin{enumerate}
\item The category~$\ZZZ_A^C$ is strict monoidal: the tensor product structure is given by the tensor product of~$\C$, together with
\begin{align*}
\sigma_{C,M \otimes M'} &= (M \otimes \sigma_{C,M'}) \circ (\sigma_{C,M} \otimes {M'}),\\
\sigma_{M \otimes M',A} &= (\sigma_{M,A} \otimes {M'}) \circ (M \otimes \sigma_{M',A}),
\end{align*}
and the unit object is~$\II$ with $\sigma_{C,\II} = \Id_C$ and $\sigma_{\II,A} = \Id_A$ (Fig.~\ref{pic:enrich_tensor}).
\item The category~$\ZZZ_A^C$ contains $(A,\sigma_{C,A},\sigma_{A,A})$ and $(C,\sigma_{C,C},\sigma_{C,A})$, as well as all their mixed tensor products.
\item Proposition~\ref{prop:AdjMod} yields a bifunctor 
\begin{align*}
E_A^C \colon \YD_A^C \times \ZZZ_A^C &\to \YD_A^C;
\end{align*}
on morphisms, it is defined by $\varphi \times \psi \mapsto \varphi \otimes \psi$.
\end{enumerate}
\end{proposition}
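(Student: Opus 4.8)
The plan is to first reduce membership in $\ZZZ_A^C$ to a handful of instances of~\eqref{eqn:cYBE}. Since an enriching structure carries $\sigma_{M,M}=\Id_{M\otimes M}$ by Definition~\ref{def:center}, the rank~$3$ condition on $(C,M,A;\osigma,\sigma_{C,M},\Id_{M\otimes M},\sigma_{M,A})$ collapses: the four instances involving only $C$ and~$A$ hold because $(C,A;\osigma)$ is already a braided system, while the three instances featuring the middle braiding (those on $M\otimes M\otimes M$, $C\otimes M\otimes M$ and $M\otimes M\otimes A$) reduce to tautologies once $\sigma_{M,M}$ is replaced by the identity. Hence $(M,\sigma_{C,M},\sigma_{M,A})\in\ZZZ_A^C$ precisely when \eqref{eqn:cYBE} holds on $C\otimes C\otimes M$, on $C\otimes M\otimes A$, and on $M\otimes A\otimes A$; I will refer to these as conditions (a), (b) and (c). In each of them only $C$- and $A$-strands cross the $M$-strand.

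For part~(1), I would first note that $\II$ with $\sigma_{C,\II}=\Id_C$ and $\sigma_{\II,A}=\Id_A$ satisfies (a)--(c) trivially. Given $M,M'\in\ZZZ_A^C$, I set $\sigma_{M\otimes M',M\otimes M'}=\Id$ and use the stated formulas for $\sigma_{C,M\otimes M'}$ and $\sigma_{M\otimes M',A}$; verifying (a)--(c) for $M\otimes M'$ is the standard ``a braiding extends to tensor products'' computation, carried out by expanding each composite braiding into its two factors and sliding the $C$- and $A$-strands past the $M$- and $M'$-strands using the naturality of~$c$ together with (a)--(c) for $M$ and for $M'$ separately. Strict associativity and unitality are inherited from~$\C$, once one observes that the tensor formulas compose associatively and that $\sigma_{C,\II\otimes M}=\sigma_{C,M}$, $\sigma_{\II\otimes M,A}=\sigma_{M,A}$. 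Finally, that the tensor product $\psi\otimes\psi'$ of two $\ZZZ_A^C$-morphisms is again one follows by combining the naturality conditions~\eqref{eqn::Nat1}--\eqref{eqn::Nat2} for $\psi$ and $\psi'$ with the same tensor formulas.

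For part~(2), I take $M=A$ with $\sigma_{C,M}:=\sigma_{C,A}$ and $\sigma_{M,A}:=\sigma_{A,A}$: then (a), (b), (c) become exactly the instances of~\eqref{eqn:cYBE} on $C\otimes C\otimes A$, on $C\otimes A\otimes A$, and on $A\otimes A\otimes A$, all valid since $(C,A;\osigma)$ is a braided system, whence $(A,\sigma_{C,A},\sigma_{A,A})\in\ZZZ_A^C$. Symmetrically, $M=C$ with $\sigma_{C,M}:=\sigma_{C,C}$ and $\sigma_{M,A}:=\sigma_{C,A}$ turns (a)--(c) into the instances on $C\otimes C\otimes C$, on $C\otimes C\otimes A$, and on $C\otimes A\otimes A$, again valid, so $(C,\sigma_{C,C},\sigma_{C,A})\in\ZZZ_A^C$. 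Their mixed tensor products then lie in~$\ZZZ_A^C$ by part~(1).

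For part~(3), Proposition~\ref{prop:AdjMod} already provides the YD module $N\otimes M$ on objects, so it remains to treat morphisms. Given $\varphi\colon N\to N'$ in $\YD_A^C$ and $\psi\colon M\to M'$ in $\ZZZ_A^C$, I must check that $\varphi\otimes\psi$ intertwines the coactions $\delta'$ and the actions $\rho'$ of Proposition~\ref{prop:AdjMod}. Writing out compatibility with $\delta'=(N\otimes\sigma_{C,M})\circ(\delta\otimes M)$ and factoring out the $\varphi$ acting on the $N$-strand, the requirement reduces precisely to~\eqref{eqn::Nat1} for $\psi$ (using that $\varphi$ preserves $\delta$); dually, compatibility with $\rho'$ reduces to~\eqref{eqn::Nat2} and to $\varphi$ preserving $\rho$. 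Preservation of identities and composition, as well as the bifunctor interchange law, are immediate from the functoriality of~$\otimes$ in~$\C$. I expect the main obstacle to be condition (b) for the tensor product in part~(1): unlike (a) and (c), which each braid a single outer object past $M\otimes M'$, the mixed instance on $C\otimes(M\otimes M')\otimes A$ forces one to interleave the expansions of $\sigma_{C,M\otimes M'}$, of $\sigma_{M\otimes M',A}$ and the fixed entwining~$\sigma_{C,A}$, so keeping track of the order in which crossings are resolved is where care is needed --- though it remains a routine, if lengthy, diagrammatic slide.
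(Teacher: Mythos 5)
Your proposal is correct and takes essentially the same approach as the paper: the paper likewise reduces part (1) to the three instances of the cYBE on $C\otimes C\otimes(M\otimes M')$, $C\otimes(M\otimes M')\otimes A$ and $(M\otimes M')\otimes A\otimes A$, verifies the mixed one in detail by peeling off the outer $\sigma$-factors and applying the cYBE for $M$ and $M'$ separately, and treats claims (2) and (3) as immediate (your write-up is in fact slightly more explicit on the reduction via $\sigma_{M,M}=\Id$ and on parts (2)--(3)). One minor correction: Section~\ref{sec:AdjMod} works over a general strict monoidal category, so no symmetric braiding $c$ is available here --- the strand-sliding you attribute to the naturality of~$c$ is just the interchange law for~$\otimes$, which is all that is needed.
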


\begin{center}
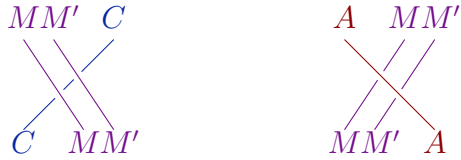

\begin{tikzpicture}[scale=0.4]
\draw [myblue] (0,0)--(1.05,1.05);
\draw [myblue] (1.33,1.33)--(1.67,1.67);
\draw [myblue] (1.93,1.93)--(3,3);
\draw [myviolet] (2,0)--(0,3);
\draw [myviolet] (3,0)--(1,3);
\node [myblue]  at (0,-0.4){$C$};
\node [myviolet]  at (2,-0.4){$M$};
\node [myviolet]  at (3.2,-0.4){$M'$};
\node at (0,3) [above,myviolet] {$M$}; 
\node at (1.2,3) [above,myviolet] {$M'$}; 
\node at (3,3) [above,myblue] {$C$}; 
\end{tikzpicture}
\hspace*{2cm}
\begin{tikzpicture}[scale=0.4]
\draw [myred] (3,0)--(0,3);
\draw [myviolet] (0,0)--(1.1,1.65);
\draw [myviolet] (1.3,1.95)--(2,3);
\draw [myviolet] (1,0)--(1.7,1.05);
\draw [myviolet] (1.9,1.35)--(3,3);
\node [myred]  at (3,-0.4){$A$};
\node [myviolet]  at (0,-0.4){$M$};
\node [myviolet]  at (1.2,-0.4){$M'$};
\node at (2,3) [above,myviolet] {$M$}; 
\node at (3.2,3) [above,myviolet] {$M'$}; 
\node at (0,3) [above,myred] {$A$};
\end{tikzpicture}
\captionof{figure}{Tensor product of two enriching structures}\label{pic:enrich_tensor}
\end{center}

\begin{proof}
\noindent{(1)}
Equation~\eqref{eqn:cYBE} on $C \otimes (M \otimes M') \otimes A$ reads
$$(\sigma_{M,A}\otimes M'\otimes A)\circ(M\otimes\sigma_{M',A}\otimes C) \circ 
(M\otimes M'\otimes\sigma_{C,A}) \circ {\color{white} A}$$
$$(M\otimes\sigma_{C,M'}\otimes A)\circ (\sigma_{C,M}\otimes M'\otimes A)=$$
$$(A\otimes M\otimes\sigma_{C,M'})\circ(A\otimes\sigma_{C,M}\otimes M') \circ
(\sigma_{C,A}\otimes M\otimes M')\circ {\color{white} A}$$
$$(C\otimes\sigma_{M,A}\otimes M')\circ (C\otimes M\otimes\sigma_{M',A}).$$
In these two expressions, one recognizes in the interior by leaving apart both exterior 
$\sigma$-expressions an expression involving only $\sigma_{M',A}$, $\sigma_{C,A}$, and
$\sigma_{C,M'}$ (respectively, only $\sigma_{C,M}$, $\sigma_{C,A}$, and $\sigma_{M,A}$). On these 
expressions, one may apply~\eqref{eqn:cYBE} for $C \otimes M' \otimes A$ (or $C \otimes M \otimes A$). 
The resulting total expressions are identical. Equation~\eqref{eqn:cYBE} on $C \otimes C \otimes (M \otimes M')$ and on $(M \otimes M') \otimes A \otimes A$ is treated similarly. As usual, drawing pictures can be helpful for following the arguments above. One concludes that $M\otimes M'$ is an enriching structure.  
Strict associativity and strict unitality are clear. 

 Claims (2) and (3) are immediate. 
\end{proof}

\begin{remark}\label{rmk:ActCoactCompat}
When saying that condition~\eqref{eqn:gen_YD} means that~$\delta$ is a morphism in $\ModCat_{(A;\sigma_{A,A})}$ or, equivalently, $\rho$ is a morphism in $\ModCat^{(C;\sigma_{C,C})}$, we endowed $M \otimes C$ and $M \otimes A$ with the structures from Propositions~\ref{prop:AdjMod} and~\ref{prop:center}.
\end{remark}

We next present a toy type of YD modules often encountered in practice; enriched according to Proposition~\ref{prop:AdjMod}, they yield an important source of meaningful examples of generalized YD modules.

\begin{definition}\label{def:char_YD} The unit object~$\II$ of~$\C$ endowed with a YD module structure over a rank~$2$ braided system in~$\C$ is called a \emph{Yetter-Drinfel$'$d character} of the system.
\end{definition}

Concretely, a YD character structure over $(C,A;\osigma)$ includes a braided character $\varepsilon_A \colon A \to \II$ over $(A; \sigma_{A,A})$ and a braided cocharacter  $\nu_C \colon \II \to C$  over $(C; \sigma_{C,C})$, compatible in the sense of
\begin{align}\label{eqn:YDChar}
\nu_C \circ \varepsilon_A &= (\varepsilon_A \otimes C) \circ  \sigma_{C,A} \circ (\nu_C \otimes A).
\end{align}

According to Proposition~\ref{prop:center}, a YD character permits to endow~$A$, $C$, and all their mixed tensor products with a YD module structure over $(C,A;\osigma)$, which we call \emph{adjoint} because of the following examples.

\begin{example}[Woronowicz and Hennings braidings for a Hopf algebra]\label{ex:Hopf}
$ $

Consider the braided system from Example~\ref{ex:YD'}. In this case, a usual character of the algebra $(H,\mu,\nu)$ (i.e., a morphism $\zeta \colon H \to \II$ satisfying $\zeta \circ \mu = \zeta \otimes \zeta$ and $\zeta \circ \nu = \Id_{\II}$) is automatically a braided character over $(A; \sigma_{A,A})$. Similarly, a usual cocharacter $\eta \colon \II \to H$ of $(H,\varepsilon,\Delta)$ is a braided cocharacter over $(C; \sigma_{C,C})$. The compatibility condition~\eqref{eqn:YDChar} becomes here
\begin{align}\label{eqn:YDchar_Hopf}
&\mu^2 \circ (S \otimes (\eta \circ \zeta) \otimes H) \circ \Delta^2 = \eta \circ \zeta.
\end{align}

In the case $\zeta = \varepsilon$ and $\eta = \nu$, it follows from the definition of the antipode. Feeding the YD character $(\varepsilon, \nu)$ and the object~$H$ viewed either as~$C$ or as~$A$ into Proposition~\ref{prop:AdjMod}, one obtains two generalized YD module structures on~$H$. We have seen that $\pi = \Id_H$ satisfies condition~\eqref{eqn:cond_pi}. Theorem~\ref{thm:genYD_br} thus yields two different braidings on~$H$:
\begin{align*}
\sigma_{H} &= (H \otimes \mu^2) \circ ({H} \otimes c_{H,H} \otimes {H}) \circ (c_{H,H} \otimes S \otimes {H}) \circ (H \otimes \Delta^2),\\
\sigma'_{H} &= (H \otimes \mu^2)  \circ (c_{H,H} \otimes S \otimes {H}) \circ ({H} \otimes c_{H,H} \otimes {H}) \circ (H \otimes \Delta^2).
\end{align*}
These braidings are in categorical duality (note that the axioms defining a Hopf algebra, as well as the cYBE, are self-dual). Pictorially, this duality is reflected in the horizontal symmetry of the corresponding diagrams. 
In~$\Vect$, these braidings read
\begin{align*}
\sigma_{H}(h \otimes h') & = h'_{(1)} \otimes S(h'_{(2)})hh'_{(3)},\\
\sigma'_{H}(h \otimes h') & = h'_{(2)} \otimes hS(h'_{(1)})h'_{(3)},
\end{align*}
which are precisely the formulas discovered by S.L.~Woronowicz in~\cite{Wor}. We thus include the results of~\cite{Wor}, which seemed mysterious at the time, into a general conceptual framework.

For a general character-cocharacter pair $(\zeta, \eta)$, condition~\eqref{eqn:YDchar_Hopf} may fail. However, it becomes true when pre-composed with~$\nu$ or post-composed with~$\varepsilon$. Hence condition~\eqref{eqn:YDChar} (tensored with $\Id_H$ on the right) holds true when pre-composed  with~$\sigma_{A,A} = \sigma_{Ass}$ or post-composed with~$\sigma_{C,C} = \sigma_{coAss}$. But this is sufficient for Proposition~\ref{prop:AdjMod} to produce generalized YD module structures on~$H$---and hence for Theorem~\ref{thm:genYD_br} to produce braidings on~$H$. These braidings are obtained from~$\sigma_{H}$ and~$\sigma'_{H}$ by replacing~$\Delta^2$ with $(H \otimes H \otimes ((\zeta \otimes H) \circ \Delta)) \circ \Delta^2$---or, respectively, by replacing~$\mu^2$ with $\mu^2 \circ (H \otimes H \otimes (\mu \circ (\eta \otimes H)))$. 
In~$\Vect$, we recover the braidings of M.A.~Hennings~\cite{Hennings}:
\begin{align*}
\sigma_{H}(h \otimes h') & = \zeta(h'_{(3)}) h'_{(1)} \otimes S(h'_{(2)})hh'_{(4)},\\
\sigma'_{H}(h \otimes h') & = h'_{(2)} \otimes hS(h'_{(1)})\eta h'_{(3)}.
\end{align*}
Note that all modules and comodules appearing in these constructions are normalized, and so one actually gets usual YD module structures on~$H$.
\end{example}

We now show that in our favourite examples, all generalized YD modules can be found inside the category~$\ZZZ_A^C$.

\begin{proposition}[YD modules as enriching structures]\label{YDModAsEnrich}
$ $

Take a Hopf algebra $(H,\mu,\nu,\varepsilon,\Delta,S)$ in a symmetric monoidal category $(\C,\otimes,\II,c)$, and consider the braided system $(H,H; \osigma)$ from Example~\ref{ex:YD'}. Then the (non-normalized) YD modules over~$H$ can be seen as a full subcategory of~$\ZZZ_H^H$ via the functor
\begin{align*}
Z_{YD} \colon \YD_H^H &\hookrightarrow \ZZZ_H^H,\\
(M,\rho,\delta) &\mapsto (M,\sigma_{H,M},\sigma_{M,H}),\\
\text{where} \qquad &\sigma_{H,M} = (M \otimes \mu) \circ (c_{H,M} \otimes H) \circ (H \otimes \delta),\\
&\sigma_{M,H} = (H \otimes \rho) \circ (c_{M,H} \otimes H) \circ (M \otimes \Delta).
\end{align*}
\end{proposition}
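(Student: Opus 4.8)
To prove Proposition~\ref{YDModAsEnrich}, I would proceed in three stages: first show that $Z_{YD}$ lands in~$\ZZZ_H^H$ (i.e. that the proposed $\sigma_{H,M}$ and $\sigma_{M,H}$ genuinely constitute an enriching structure), then show that $Z_{YD}$ is well defined, full, and faithful on morphisms, and finally identify its image as a full subcategory. The crux is the first stage, since the functoriality and fullness/faithfulness statements should reduce to comparing the naturality conditions~\eqref{eqn::Nat1}--\eqref{eqn::Nat2} against the morphism condition~\eqref{eqn:BrModMor} for YD modules, which is essentially bookkeeping.

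For the first stage I must verify, for the rank~$3$ system $(H,M,H;\osigma,\sigma_{H,M},\Id_{M\otimes M},\sigma_{M,H})$, all the instances of the coloured Yang--Baxter equation~\eqref{eqn:cYBE} that involve the new object~$M$. Because $\Id_{M\otimes M}$ is used as the $M$-$M$ braiding, every instance containing two or three copies of~$M$ holds trivially. This leaves exactly three non-trivial instances to check: \eqref{eqn:cYBE} on $H\otimes H\otimes M$ (i.e.\ $C\otimes C\otimes M$), on $H\otimes M\otimes H$ (i.e.\ $C\otimes M\otimes A$), and on $M\otimes H\otimes H$ (i.e.\ $M\otimes A\otimes A$). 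My strategy for each is to unfold the definitions of $\sigma_{H,M}$ and $\sigma_{M,H}$ into their $c$-, $\mu$-, $\Delta$-, $\rho$-, $\delta$-components and then massage both sides using the naturality and symmetry of~$c$ to slide strands freely, as in the proof of Theorem~\ref{thm:genYD_br}. I expect the $C\otimes C\otimes M$ instance to follow from coassociativity of~$\Delta$ together with the fact that $\delta$ is a genuine coaction; the $M\otimes A\otimes A$ instance dually from associativity of~$\mu$ and $\rho$ being a genuine action; and the mixed $C\otimes M\otimes A$ instance to be the place where the Yetter--Drinfel$'$d compatibility~\eqref{eqn:gen_YD} for $(M,\rho,\delta)$ is consumed, since that relation is precisely what lets the coaction and action strands pass each other through the entwining~$\sigma_{C,A}$.

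Concretely I would observe that $\sigma_{H,M}$ has the same shape as the coaction-based half of the braiding in Fig.~\ref{pic:br_genYD} and $\sigma_{M,H}$ the action-based half, so the three cYBE instances above are structurally the mirror images of the three module/comodule/YD verifications that already appear in the proof of Proposition~\ref{prop:AdjMod} (with the roles of ``enriching object'' and ``module'' interchanged). Rather than redo each diagrammatic manipulation, I would organise the argument so that the $C\otimes C\otimes M$ check reduces to the braided-comodule axiom~\eqref{eqn:BrMod} for $\delta$ over $(C;\sigma_{coAss})$, the $M\otimes A\otimes A$ check to the braided-module axiom~\eqref{eqn:BrMod} for $\rho$ over $(A;\sigma_{Ass})$, and the $C\otimes M\otimes A$ check to~\eqref{eqn:gen_YD}. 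The main obstacle will be managing the antipode and the doubled products $\mu^2,\Delta^2$ hidden inside $\sigma_{C,A}$ of Example~\ref{ex:YD'}: once the definitions are expanded the diagrams become large, and one must be careful that the compatibility~\eqref{eqn:gen_YD}, which was stated abstractly, is being applied to the correct sub-tangle after the $c$-strands have been rearranged.

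For the final stage I would note that a morphism $\varphi\colon M\to M'$ satisfies~\eqref{eqn::Nat1}--\eqref{eqn::Nat2} for the structures $Z_{YD}(M),Z_{YD}(M')$ if and only if it commutes with both $\rho$ and $\delta$ in the sense of~\eqref{eqn:BrModMor}; expanding the definitions and cancelling the (monic, by symmetry) categorical braidings $c$ gives the equivalence, so $Z_{YD}$ is full and faithful, and its image is the full subcategory on objects of the form $(M,\sigma_{H,M},\sigma_{M,H})$ arising from YD data. This yields the claimed identification of $\YD_H^H$ with a full subcategory of~$\ZZZ_H^H$.
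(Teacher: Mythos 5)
Your overall strategy coincides with the paper's. The paper's proof likewise reduces the first stage to the three instances of the cYBE on $C\otimes C\otimes M$, $C\otimes M\otimes A$ and $M\otimes A\otimes A$ (and simply leaves the diagrammatic verifications to the reader), and your bookkeeping of which axiom is consumed where --- the braided comodule axiom, the YD compatibility~\eqref{eqn:gen_YD} together with the antipode axiom, and the braided module axiom, all glued by the bialgebra compatibility and the naturality of~$c$ --- is correct; so is your observation that $\Id_{M\otimes M}$ kills every instance with two or more copies of~$M$.

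The one place where your argument as written does not quite close is the fullness step. Cancelling the categorical braiding~$c$ is legitimate (it is invertible), but after expanding and sliding $\varphi$ through~$c$ by naturality, condition~\eqref{eqn::Nat2}, say, reduces to
\[
(H\otimes(\varphi\circ\rho))\circ(c_{M,H}\otimes H)\circ(M\otimes\Delta)
=(H\otimes(\rho'\circ(\varphi\otimes H)))\circ(c_{M,H}\otimes H)\circ(M\otimes\Delta),
\]
and $M\otimes\Delta$ is not an epimorphism, so you cannot cancel it on the right to conclude that $\varphi$ intertwines the actions. The missing ingredient is the (co)unit: post-composing $\sigma_{M,H}$ with $\varepsilon\otimes M$ and pre-composing $\sigma_{H,M}$ with $\nu\otimes M$ recovers the YD data, namely $\rho=(\varepsilon\otimes M)\circ\sigma_{M,H}$ and $\delta=\sigma_{H,M}\circ(\nu\otimes M)$. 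These reconstruction formulas are exactly the device the paper uses: they yield the implication from~\eqref{eqn::Nat1}--\eqref{eqn::Nat2} to the (co)module morphism conditions (hence fullness), and they also show that $Z_{YD}$ is injective on objects --- a point your proposal does not address, but which is what lets one regard $\YD_H^H$ as an actual full subcategory of~$\ZZZ_H^H$ rather than merely a category admitting a fully faithful functor into it.
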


The braided systems thus obtained are related to, but different from, those studied in \cite{Lebed,Lebed2ter}.

\begin{proof}
In order to show that one indeed gets enriching structures, one has to check $3$ instances of the cYBE. The graphical calculus works well here; we leave the tedious but straightforward verifications to the Reader. Further, note that the YD structure on~$M$ can be reconstructed from the $\sigma$'s via
\begin{align}\label{eqn:ZtoYD}
\rho &= (\varepsilon \otimes M) \circ \sigma_{M,H}, & 
\delta &= \sigma_{H,M} \circ (\nu \otimes M). 
\end{align}
This proves that~$Z_{YD}$ is injective on objects. These formulas also show that the naturality condition~\eqref{eqn::Nat1} for a morphism $\varphi \colon M \to M'$ in~$\C$ is equivalent to $\varphi$ being a morphism of comodules, while~\eqref{eqn::Nat2} is equivalent to $\varphi$ being a morphism of modules. Thus the functor~$Z_{YD}$ is well-defined and fully faithful on morphisms. 
\end{proof}

\begin{remark}
In fact, formulas~\eqref{eqn:ZtoYD} define a functor $M_{YD} \colon \ZZZ_H^H \to \YD_H^H$ such that $M_{YD} \circ Z_{YD}$ is the identity functor on~$\YD_H^H$. It would be interesting to understand how far these functors are from category equivalences.
\end{remark}

Combining~$Z_{YD}$ with the functor $E_H^H \colon \YD_H^H \times \ZZZ_H^H \to \YD_H^H$ from Proposition~\ref{prop:center}, one obtains a bifunctor
$\YD_H^H \times \YD_H^H \to \YD_H^H$. Together with the YD character $(I,\varepsilon,\nu)$ (Example~\ref{ex:Hopf}), they define a tensor structure on~$\YD_H^H$, extending the classical tensor structure on~$\YDn_H^H$ to the non-normalized setting. (The vocabulary of tensor categories is recalled in Definition~\ref{def:tensor}.) Concretely, the tensor product of two YD modules $(M,\rho,\delta)$ and $(M',\rho',\delta')$ in $\YD_H^H$ is the object $M \otimes M'$ with
\begin{align}
\rho_{M \otimes M'} &= (\rho \otimes \rho') \circ (M \otimes c_{M',H} \otimes H) \circ ({M \otimes M'} \otimes \Delta),\label{eqn:tens_YD}\\
\delta_{M \otimes M'} &= ({M \otimes M'} \otimes \mu)  \circ (M \otimes c_{H,M'} \otimes H) \circ (\delta \otimes \delta').\label{eqn:tens_YD'}
\end{align}
The maps~$\sigma_{gYD}$ become morphisms in this category. Even better: they provide a braided structure on~$\YD_H^H$.

\begin{proposition}[Representations of a crossed module of groups as enriching structures]\label{RepCrModGrAsEnrich}
$ $

Take a crossed module of groups $(K, G, \pi, \cdot)$, and consider the braided system $(\kk K, \kk G; \osigma)$ from Example~\ref{ex:CrMod'}. Then the YD modules over this system (and, in particular, Bantay's  representations of our crossed module) can be seen as a full subcategory of~$\ZZZ_{\kk G}^{\kk K}$ via the functor
\begin{align*}
Z_{CrMod} \colon \YD_{\kk G}^{\kk K} &\hookrightarrow \ZZZ_{\kk G}^{\kk K},\\
(M,\rho,\delta) &\mapsto (M,\sigma_{\kk K,M},\sigma_{M,\kk G}),\\
\text{where} \qquad &\sigma_{\kk K,M} (k,m) = \sum\nolimits_{k' \in K} m_{k'} \otimes kk',\\
&\sigma_{M,\kk G}(m,g) = (g, m\ast g),
\end{align*}
the coaction~$\delta$ is written as $\delta(m)= \sum_{k \in K} m_k \otimes k$, and~$\ast$ denotes the action~$\rho$.
\end{proposition}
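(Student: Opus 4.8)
The plan is to mirror the proof of Proposition~\ref{YDModAsEnrich}, of which this statement is the group-algebra analogue: here $\kk K$ plays the role of~$H$ through its coalgebra structure (entering $\sigma_{\kk K,M}$) and $\kk G$ plays the role of~$H$ through its algebra structure (entering $\sigma_{M,\kk G}$). Writing $\delta(m) = \sum_{k} m_{k} \otimes k$ and $\rho = \ast$, one first notes that
\begin{align*}
\sigma_{\kk K,M} &= (M \otimes \mu_{\kk K}) \circ (c_{\kk K,M} \otimes \kk K) \circ (\kk K \otimes \delta),\\
\sigma_{M,\kk G} &= (\kk G \otimes \rho) \circ (c_{M,\kk G} \otimes \kk G) \circ (M \otimes \Delta_{\kk G}),
\end{align*}
so that $Z_{CrMod}$ is given by the very same formulas as $Z_{YD}$; only the entwining $\sigma_{C,A}(k \otimes g) = g \otimes (k \cdot g)$ is new, so the cYBE checks have to be repeated in this setting.

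First I would verify that $(M,\sigma_{\kk K,M},\sigma_{M,\kk G})$ is an enriching structure (Definition~\ref{def:center}), i.e.\ confirm the three instances of the cYBE not already forced by the choice $\sigma_{M,M} = \Id$. The instance on $\kk K \otimes \kk K \otimes M$ follows from the braided comodule axiom for~$\delta$ (which makes the coaction ``diagonal'' in the group basis); the instance on $M \otimes \kk G \otimes \kk G$ reduces, upon tracing $m \otimes g_1 \otimes g_2$, to $(m \ast g_1) \ast g_2 = (m \ast e) \ast (g_1 g_2)$, which is exactly the braided module axiom for~$\rho$ (Example~\ref{ex:Ass}). The instance on $\kk K \otimes M \otimes \kk G$ is the crucial one: tracing $k \otimes m \otimes g$ through both composites reduces the claim to
$$\sum_{k'} (m_{k'} \ast g) \otimes (k \cdot g)(k' \cdot g) = \sum_{k''} (m \ast g)_{k''} \otimes (k \cdot g)\, k''.$$
This is obtained from the defining YD relation~\eqref{eqn:gen_YD} (which, written out for our~$\sigma_{C,A}$, reads $\sum_{k''} (m \ast g)_{k''} \otimes k'' = \sum_{k'} (m_{k'} \ast g) \otimes (k' \cdot g)$) by left-multiplying the $\kk K$-leg by $k \cdot g$ and invoking the fact that~$g$ acts on~$K$ by a group automorphism, so that $(kk') \cdot g = (k \cdot g)(k' \cdot g)$. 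This step is the main obstacle, as it is the sole point where the Yetter--Drinfel$'$d compatibility is consumed, and it hinges on reconciling the reindexing of the grading under the twisted action with the group law.

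Finally I would establish that $Z_{CrMod}$ is injective on objects and fully faithful, exactly as in Proposition~\ref{YDModAsEnrich}. Injectivity on objects comes from reconstructing the YD data by the analogues of~\eqref{eqn:ZtoYD},
\begin{align*}
\rho &= (\varepsilon \otimes M) \circ \sigma_{M,\kk G}, & \delta &= \sigma_{\kk K,M} \circ (\nu \otimes M),
\end{align*}
using $\varepsilon(g) = 1$ and $\nu(1) = e$: indeed $(\varepsilon \otimes M)(g \otimes (m \ast g)) = m \ast g$ and $\sigma_{\kk K,M}(e \otimes m) = \sum_{k'} m_{k'} \otimes k' = \delta(m)$. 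The same two formulas show that, for $\varphi \colon M \to M'$ in~$\C$, the naturality condition~\eqref{eqn::Nat2} is equivalent to $\varphi(m \ast g) = \varphi(m) \ast' g$ (i.e.\ $\varphi$ is a module morphism) and~\eqref{eqn::Nat1} is equivalent to $\varphi(m_{k}) = \varphi(m)_{k}$ for all~$k$ (i.e.\ $\varphi$ preserves the grading, hence is a comodule morphism). Thus a morphism in~$\ZZZ_{\kk G}^{\kk K}$ between objects in the image is precisely a morphism of YD modules, and $Z_{CrMod}$ realizes $\YD_{\kk G}^{\kk K}$ as a full subcategory of~$\ZZZ_{\kk G}^{\kk K}$.
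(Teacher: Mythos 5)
Your proposal is correct and follows exactly the route the paper takes: the paper's own proof is a one-line reference to the argument of Proposition~\ref{YDModAsEnrich} (check the three nontrivial instances of the cYBE, then use the reconstruction formulas analogous to~\eqref{eqn:ZtoYD} to get injectivity on objects and to identify the naturality conditions \eqref{eqn::Nat1}--\eqref{eqn::Nat2} with the (co)module morphism conditions). Your explicit verifications — in particular the reduction of the $\kk K \otimes M \otimes \kk G$ instance to the YD compatibility via $(kk')\cdot g = (k\cdot g)(k'\cdot g)$ — correctly fill in the details the paper leaves to the reader.
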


The proof is similar to that of Proposition~\ref{YDModAsEnrich}. The functor~$Z_{CrMod}$ admits a left inverse~$M_{CrMod}$, defined by formulas analogous to~\eqref{eqn:ZtoYD}. Combining~$Z_{CrMod}$ with the functor $E_{\kk G}^{\kk K}$, one obtains a bifunctor $\YD_{\kk G}^{\kk K} \times \YD_{\kk G}^{\kk K} \to \YD_{\kk G}^{\kk K}$, yielding a monoidal structure on~$\YD_{\kk G}^{\kk K}$. Explicitly, the tensor product of two YD modules over $(\kk K, \kk G; \osigma)$ is endowed with diagonal action and coaction, in the spirit of \eqref{eqn:tens_YD}-\eqref{eqn:tens_YD'}. The braidings~$\sigma_{gYD}$ enrich this monoidal category into a braided one.

\begin{proposition}[Twisted representations of a crossed module of shelves as enriching structures]\label{TwRepsAsEnrich}
$ $

Take a crossed module of shelves $(R,S,\pi,\cdot)$, and consider the braided system $(R,S; \osigma)$ from Proposition~\ref{prop:CrModShBrSyst}.

\begin{enumerate}
\item  The twisted representations of our crossed module can be seen as a full subcategory of~$\ZZZ_S^R$ via the functor
\begin{align*}
Z_{SD} \colon \MMM^{\operatorname{tw}}_{\Set}(R,S) \simeq \YD_S^R &\hookrightarrow \ZZZ_S^R,\\
(M, \mop, gr,f) &\mapsto (M,\sigma_{R,M},\sigma_{M,S}),\\
\text{where} \qquad &\sigma_{R,M} (r \otimes m) = f(m) \otimes gr(m) ,\\
&\sigma_{M,S} (m \otimes s) = s \otimes m \mop s.
\end{align*}
\item Alternatively, enriching structures can be constructed out of twisted representations via the functor
\begin{align*}
\widetilde{Z}_{SD} \colon \MMM^{\operatorname{tw}}_{\Set}(R,S) \simeq \YD_S^R &\to \ZZZ_S^R,\\
(M, \mop, gr,f) &\mapsto (M,\sigma_{R,M},\sigma_{M,S}),\\
\text{where} \qquad &\sigma_{R,M} (r \otimes m) = f(m) \otimes r \op gr(m) ,\\
&\sigma_{M,S} (m \otimes s) = s \otimes m \mop s.
\end{align*}
\end{enumerate}
Similar functors exist in the linear setting.
\end{proposition}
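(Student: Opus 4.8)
The plan is to check directly, in $\Set$ (the linear statements then following by linearization), that each functor produces a genuine enriching structure, i.e. that the rank-$3$ family $(R,M,S;\sigma_{R,R},\sigma_{R,S},\sigma_{S,S},\sigma_{R,M},\Id_{M\otimes M},\sigma_{M,S})$ satisfies the colored Yang--Baxter equation~\eqref{eqn:cYBE}. Because $\sigma_{M,M}=\Id_{M\otimes M}$, every instance involving two copies of~$M$ is automatic, while the instances on $R\otimes R\otimes R$, $R\otimes R\otimes S$, $R\otimes S\otimes S$ and $S\otimes S\otimes S$ are already furnished by Proposition~\ref{prop:CrModShBrSyst}. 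Only three instances remain, namely those on $R\otimes R\otimes M$, on $M\otimes S\otimes S$, and on $R\otimes M\otimes S$. Throughout I invoke Proposition~\ref{prop:CatYDSR}, under which a YD module corresponds to a twisted representation $(M,\mop,gr,f)$ with $\rho(m\otimes s)=m\mop s$ and $\delta(m)=f(m)\otimes gr(m)$.

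For $Z_{SD}$ I would chase a general element through each of the three instances and match the outcome to a twisted-representation axiom. The $M\otimes S\otimes S$ instance involves only $\sigma_{M,S}(m\otimes s)=s\otimes m\mop s$ and collapses, on $m\otimes s\otimes\ws$, to the shelf-action axiom~\eqref{eqn:ShAction}. The $R\otimes R\otimes M$ instance, using $\sigma_{R,R}=\sigma_{coAss}$, reduces to $gr(f(m))=gr(m)$, i.e. to $f$ respecting the grading. The $R\otimes M\otimes S$ instance is the pivotal one: tracing $r\otimes m\otimes s$ through both composites and comparing the $M$- and $R$-components yields exactly $f(m\mop s)=f(m)\mop s$ and $gr(m\mop s)=gr(m)\cdot s$ --- that is, $f$ intertwines the action and the grading is compatible with it (the pointwise form of $M_r\mop s\subseteq M_{r\cdot s}$). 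Since these three pairs of identities are precisely the clauses defining a twisted representation, $Z_{SD}$ lands in $\ZZZ_S^R$. To upgrade this to a fully faithful embedding I would read $(f,gr)$ off the components of $\sigma_{R,M}$ and $\mop$ off $\sigma_{M,S}$ (so $Z_{SD}$ is injective on objects), and then observe that the naturality conditions~\eqref{eqn::Nat1}--\eqref{eqn::Nat2} for $\varphi\colon M\to M'$ unwind into $\varphi f=f'\varphi$, $gr'\varphi=gr$, and $S$-equivariance of~$\varphi$ --- exactly the definition of a morphism of twisted representations.

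For $\widetilde{Z}_{SD}$ the argument is parallel, the sole change being $\sigma_{R,M}(r\otimes m)=f(m)\otimes r\op gr(m)$. The $M\otimes S\otimes S$ and $R\otimes R\otimes M$ instances are handled as before (the latter still using $gr\circ f=gr$ together with $\sigma_{R,R}=\sigma_{coAss}$). In the $R\otimes M\otimes S$ instance the $R$-component now demands $(r\op gr(m))\cdot s=(r\cdot s)\op gr(m\mop s)$; feeding in the grading compatibility $gr(m\mop s)=gr(m)\cdot s$ turns this into $(r\op gr(m))\cdot s=(r\cdot s)\op(gr(m)\cdot s)$, which is nothing but the requirement that $S$ act on $R$ by shelf morphisms --- a defining axiom of a crossed module of shelves. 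Thus $\widetilde{Z}_{SD}$ too takes values in $\ZZZ_S^R$; functoriality on morphisms follows from~\eqref{eqn::Nat1}--\eqref{eqn::Nat2} as before, the grading-preservation of $\varphi$ rendering the extra $\op$-factor harmless.

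I expect the main difficulty to be purely organizational: performing the three cYBE diagram chases carefully and identifying each resulting equation with the correct axiom. The one genuinely instructive point is the $R\otimes M\otimes S$ instance for $\widetilde{Z}_{SD}$, where the additional $\op$ in $\sigma_{R,M}$ is exactly what drags the ``acts by shelf morphisms'' clause into the computation --- a clause that plays no role for $Z_{SD}$. This contrast also explains why the two functors coexist: $Z_{SD}$ amounts to $\widetilde{Z}_{SD}$ built from the trivial shelf operation $r\op_0 r'=r'$ on~$R$.
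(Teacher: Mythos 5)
Your proposal is correct and follows essentially the same route as the paper's proof: checking the three nontrivial instances of the cYBE (on $R\otimes R\otimes M$, $R\otimes M\otimes S$, $M\otimes S\otimes S$) and identifying each with an axiom of a twisted representation (resp.\ of the crossed module), then recovering $f$, $gr$, $\mop$ from the $\sigma$'s and matching the naturality conditions~\eqref{eqn::Nat1}--\eqref{eqn::Nat2} with morphisms of twisted representations. You also correctly claim only functoriality (not full faithfulness) for $\widetilde{Z}_{SD}$, in line with the paper's observation that~\eqref{eqn::Nat1} is implied by, but not equivalent to, the morphism conditions in that case.
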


\begin{proof}
We treat only the set-theoretic case here; the linear case is similar.

\begin{enumerate}
\item As usual, one has to check $3$ instances of the cYBE. We do it here by explicit calculations.
\begin{itemize}
\item On $R \otimes R \otimes M$, the cYBE takes the form
$$f^2(m) \otimes gr(f(m)) \otimes gr(m) = f^2(m) \otimes gr(m) \otimes gr(m),$$
which is equivalent to~$f$ preserving the $R$-grading.
\item On $R \otimes M \otimes S$, the cYBE becomes
$$s \otimes f(m) \mop s \otimes gr(m)\cdot s = s \otimes f(m \mop s) \otimes gr(m \mop s),$$
which is equivalent to the $S$-actions intertwining both~$f$ and~$gr$.
\item On $M \otimes S \otimes S$, the cYBE reads
$$s' \otimes s \op s' \otimes (m \mop s) \mop s'  = s' \otimes s \op s' \otimes (m \mop s') \mop (s \op s'),$$
which is equivalent to~$\mop$ being an $S$-action.
\end{itemize}

Further, the maps~$f$ and~$gr$ can be reconstructed from~$\sigma_{R,M}$, and the $S$-action~$\mop$ from~$\sigma_{M,S}$. Moreover, the naturality condition~\eqref{eqn::Nat1} for a morphism $\varphi \colon M \to M'$ in~$\C$ is equivalent to $\varphi$ respecting the $R$-grading and intertwining~$f$ and~$f'$, while~\eqref{eqn::Nat2} is equivalent to $\varphi$ being a morphism of $S$-modules. Thus the functor~$Z_{SD}$ is well-defined and fully faithful on morphisms, and injective on objects.

\item The cYBE on $R \otimes R \otimes M$ and $R \otimes M \otimes S$ become here
$$f^2(m) \otimes r\op gr(f(m)) \otimes r\op gr(m) = f^2(m) \otimes r\op gr(m) \otimes r\op gr(m),$$
$$s \otimes f(m) \mop s \otimes (r\op gr(m))\cdot s = s \otimes f(m \mop s) \otimes (r \cdot s)\op gr(m \mop s).$$
They follow from the defining properties of a twisted representation. On $M \otimes S \otimes S$, the cYBE is the same as in the previous case. Further, the naturality condition~\eqref{eqn::Nat1} for a morphism $\varphi \colon M \to M'$ in~$\C$ follows from (but is not equivalent to!) $\varphi$ respecting the $R$-gradings and intertwining~$f$ and~$f'$, while~\eqref{eqn::Nat2} is equivalent to $\varphi$ being a morphism of $S$-modules. One thus has a well-defined functor. \qedhere
\end{enumerate} 
\end{proof}

The existence of two braided system structures on $(R,M,S)$ is a general phenomenon in the world of shelves; we already met it when observing two braided system structures on $(R,S)$ (Remark~\ref{rmk:CrModShBrSyst}).

\begin{remark}
The map~$\sigma_{R,M}$ used for constructing~$\widetilde{Z}_{SD}$ is in fact the braiding $\sigma_{TwCrModSh}$ for the twisted representations $(R, \cdot, \Id_R, \Id_R)$ and $(M, \mop,$ 
$gr, f)$ of the crossed module of shelves $(R,S,\pi,\cdot)$, since one has 
$$r \op gr(m) = r \cdot \pi(gr(m)).$$
\end{remark}

Combining~$Z_{SD}$ or~$\widetilde{Z}_{SD}$ with the functor~$E^R_S$ from Proposition~\ref{prop:center}, one obtains two bifunctors
\begin{align}\label{eqn:TensorRepCrModSh}
\otimes &= E^R_S \circ(\Id \times Z_{SD}),& \wotimes &= E^R_S \circ(\Id \times \widetilde{Z}_{SD}) 
\end{align}
from $\MMM^{\operatorname{tw}}_{\Set}(R,S) \times \MMM^{\operatorname{tw}}_{\Set}(R,S)$ to $\MMM^{\operatorname{tw}}_{\Set}(R,S)$.
The corresponding product structures are explicitly written as follows:

\begin{proposition}[Products of twisted representations of a crossed module of shelves]\label{ProdRepsCrModSh}
$\quad$

Given two twisted representations $(M, \mop, gr,f)$ and $(M', \mop', gr',f')$  in $\MMM^{\operatorname{tw}}_{\Set}(R,S)$, their product $M \otimes M'$ can be seen as a twisted representation in two different ways. In both cases the $S$-actions and the twisting maps are assembled diagonally:
\begin{align*}
(m \otimes m') \mop_{\otimes} s = (m \otimes m') \mop_{\widetilde{\otimes}} s &= m \mop s \otimes m' \mop' s, \\
f_{\otimes} (m \otimes m') = f_{\widetilde{\otimes}} (m \otimes m') &= f(m) \otimes f'(m').
\end{align*}
The $R$-gradings can be assembled either peripherally or diagonally:
\begin{align*}
gr_{\otimes}(m \otimes m') &= gr'(m'), &  gr_{\widetilde{\otimes}}(m \otimes m') &= gr(m) \op gr'(m').
\end{align*}
Similar structures exist in the linear setting.
\end{proposition}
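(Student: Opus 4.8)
The plan is to unwind the two bifunctors of~\eqref{eqn:TensorRepCrModSh} by substituting the relevant data into the enrichment formulas of Proposition~\ref{prop:AdjMod}, and then to translate the resulting generalized YD module structure back into the language of twisted representations. First I would recall the dictionary of Propositions~\ref{prop:CatYDSR} and~\ref{TwRepsAsEnrich}: a twisted representation $(M,\mop,gr,f)$ corresponds to the YD module over $(R,S;\osigma)$ whose $S$-action is $\rho(m\otimes s)=m\mop s$ and whose $R$-coaction is $\delta(m)=f(m)\otimes gr(m)$. Reading off $(f,gr,\mop)$ from a given $(\rho,\delta)$ is then immediate: $\mop$ is the action, and $f$, $gr$ are the two factors of $\delta$.

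To compute $\otimes = E^R_S\circ(\Id\times Z_{SD})$, I would feed the first representation $(M,\mop,gr,f)$ into the bifunctor as the YD module $N=M$, and the second $(M',\mop',gr',f')$ as the enriching structure $Z_{SD}(M')$, for which $\sigma_{R,M'}(r\otimes \wm)=f'(\wm)\otimes gr'(\wm)$ and $\sigma_{M',S}(\wm\otimes s)=s\otimes \wm\mop' s$. Inserting these into $\delta'=(N\otimes\sigma_{R,M'})\circ(\delta\otimes M')$ and $\rho'=(\rho\otimes M')\circ(N\otimes\sigma_{M',S})$ and evaluating on elements gives
\begin{align*}
\delta'(m\otimes \wm)&=f(m)\otimes f'(\wm)\otimes gr'(\wm), &
\rho'\big((m\otimes \wm)\otimes s\big)&=(m\mop s)\otimes(\wm\mop' s).
\end{align*}
Matching $\delta'$ against the normal form $\delta(x)=f(x)\otimes gr(x)$ reads off $f_\otimes(m\otimes \wm)=f(m)\otimes f'(\wm)$ together with the peripheral grading $gr_\otimes(m\otimes \wm)=gr'(\wm)$, while $\rho'$ gives the diagonal $S$-action, as claimed.

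For $\wotimes=E^R_S\circ(\Id\times\widetilde{Z}_{SD})$ the computation is word for word the same except that the enriching structure is now $\widetilde{Z}_{SD}(M')$, whose only difference is $\sigma_{R,M'}(r\otimes \wm)=f'(\wm)\otimes r\op gr'(\wm)$. When this is composed with $\delta\otimes M'$, the incoming $R$-label $r$ is precisely the second factor $gr(m)$ of $\delta(m)$, so the last factor of $\delta'$ becomes $gr(m)\op gr'(\wm)$, yielding the diagonal grading $gr_{\wotimes}(m\otimes \wm)=gr(m)\op gr'(\wm)$. Since $\sigma_{M',S}$ coincides for $Z_{SD}$ and $\widetilde{Z}_{SD}$, the $S$-action and the twisting map are unchanged. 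The linear case follows from the identical computation read in Sweedler notation.

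I expect no genuine obstacle here: because $Z_{SD}$ and $\widetilde{Z}_{SD}$ land in $\ZZZ_S^R$ and $E^R_S$ lands in $\YD_S^R$ (Propositions~\ref{TwRepsAsEnrich} and~\ref{prop:center}), the outputs are automatically twisted representations, so none of the defining axioms need be re-verified. The only thing demanding care is bookkeeping---tracking which tensor factor each of $\sigma_{R,M'}$ and $\sigma_{M',S}$ acts on, and isolating the single entry in which $Z_{SD}$ and $\widetilde{Z}_{SD}$ differ---which is exactly what pins down the peripheral-versus-diagonal alternative for the $R$-grading.
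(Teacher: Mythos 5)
Your proposal is correct and is precisely the argument the paper leaves implicit: Proposition~\ref{ProdRepsCrModSh} is presented there as the explicit form of the bifunctors~\eqref{eqn:TensorRepCrModSh}, so the ``proof'' is exactly the unwinding of $E^R_S$ applied to $Z_{SD}(M')$ resp.\ $\widetilde{Z}_{SD}(M')$ that you carry out, with well-definedness guaranteed by Propositions~\ref{prop:AdjMod}, \ref{prop:CatYDSR} and~\ref{TwRepsAsEnrich}. Your element-level computations of $\delta'$ and $\rho'$ match the stated formulas, including the single point of divergence ($\sigma_{R,M'}$) that produces the peripheral versus diagonal grading.
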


It is natural to ask if any of these functors is a part of a monoidal structure on $\MMM^{\operatorname{tw}}_{\Set}(R,S)$. This question is more subtle here than in the case of usual YD modules or representations of a crossed module of groups. We now show that one gets something close to a monoidal category, and study the place of the braidings $\sigma_{TwCrModSh}$ (Remark~\ref{rmk:BrYDSR}) in this category. To give precise assertions, some definitions from category theory are first due.

\begin{definition}[Categorical vocabulary]\label{def:tensor}
$ $

\begin{itemize}
\item A \emph{pre-tensor category} is a category $\C$ endowed with a \emph{tensor product}, i.e., a bifunctor $\otimes \colon \C\times \C \to \C$ and natural isomorphisms 
$$\big(\,\alpha_{U,V,W} \colon (U \otimes V) \otimes W \overset{\sim}{\to} U \otimes (V \otimes W) \,\big)_{U,V,W \in \Ob(\C)},$$ 
called \emph{associator}, or \emph{associativity constraint}, satisfying the \emph{pentagon axiom}
\begin{equation}\label{eqn:Pentagon}
\xymatrix@R=0.25cm @C=0.6cm{
& ((U \otimes V) \otimes W) \otimes X \ar[dl]_-{\alpha_{U,V,W} \otimes X\quad} \ar[ddr]^{\alpha_{U \otimes V,W,X}} & \\
(U \otimes (V \otimes W)) \otimes X \ar[dd]_{\alpha_{U,V \otimes W,X} } & &  \\
& & (U \otimes V) \otimes (W \otimes X) \ar[ddl]^{\alpha_{U,V,W \otimes X}}\\
U \otimes ((V \otimes W) \otimes X) \ar[dr]_-{U \otimes \alpha_{V,W,X}\quad}& &  \\
& U \otimes (V \otimes (W \otimes X))& 
}
\end{equation}

\item A pre-tensor category is called \emph{tensor}, or \emph{monoidal}, if endowed with a \emph{unit}, i.e., an object~$I$ in~$\C$ and natural isomorphisms
$$\big(\,\lambda_{V} \colon I \otimes V \overset{\sim}{\to} V, \; \rho_{V} \colon V \otimes I \overset{\sim}{\to} V\,\big)_{V \in \Ob(\C)},$$ 
called \emph{left and right unitors}, or a \emph{unit constraint}, satisfying the \emph{triangle axiom}
\begin{equation}\label{eqn:Triangle}
\xymatrix@R=0.25cm @C=0.6cm{
(V \otimes I) \otimes W \ar[rr]^{\alpha_{V,I,W}}\ar[dr]_{\rho_{V} \otimes W}&& V \otimes (I \otimes W)\ar[dl]^{{V} \otimes \lambda_W}\\
& V \otimes W &
}
\end{equation}

\item A (pre-)tensor category is called \emph{strict} if all the constraints are the identity morphisms.

\item A (pre-)tensor category is called \emph{braided} if it is endowed with a \emph{braiding}, or \emph{commutativity constraint}, i.e. natural isomorphisms
$$\big(\,c_{V,W} \colon V \otimes W \overset{\sim}{\to} W \otimes V \,\big)_{V,W \in \Ob(\C)}$$ 
respecting the tensor product, in the sense of the \emph{hexagon axioms}
\begin{equation}\label{eqn:Hexagon}
\xymatrix@R=0.25cm @C=0.7cm{
&U \otimes (V \otimes W) \ar[r]^{c_{U,V \otimes W}}&(V \otimes W) \otimes U \ar[dr]^{\alpha_{V,W,U}}&\\
(U \otimes V) \otimes W \ar[dr]_-{c_{U,V} \otimes W\quad} \ar[ur]^{\alpha_{U,V,W}}&&&V \otimes (W \otimes U)\\
&(V \otimes U) \otimes W \ar[r]_{\alpha_{V,U,W}}&V \otimes (U \otimes W)\ar[ur]_{\quad V \otimes c_{U,W}}&
}
\end{equation}
\begin{equation}\label{eqn:Hexagon2}
\xymatrix@R=0.25cm @C=0.7cm{
&(U \otimes V) \otimes W \ar[r]^{c_{U \otimes V, W}}& W \otimes (U \otimes V)  \ar[dr]^{\alpha^{-1}_{W,U,V}}&\\
U \otimes (V \otimes W) \ar[dr]_-{U \otimes c_{V,W}\quad} \ar[ur]^{\alpha^{-1}_{U,V,W}}&&&(W \otimes U) \otimes V\\
&U \otimes (W \otimes V) \ar[r]_{\alpha^{-1}_{U,W,V}}& (U \otimes W) \otimes V \ar[ur]_{\quad c_{U,W} \otimes V}&
}
\end{equation}
\end{itemize}
\end{definition}

This terminology is classical except for \emph{pre-tensor categories}, which were first considered by F.~Li~\cite{PreTensor}.

In a braided monoidal category, any family of objects~$V_i$ equipped with the morphisms $\sigma_{i,j} = c_{V_i,V_j}$ form a braided system: the cYBE on $V_i \otimes V_j \otimes V_k$ follows from the naturality of~$c$ with respect to~$\sigma_{i,j}$ and~$\Id_{V_k}$, together with the hexagon axiom~\eqref{eqn:Hexagon2}. This is one of the reasons for the long-standing interest in such categories.

Let us now see how close our twisted representation categories of crossed modules of shelves are to braided monoidal categories.

\begin{theorem}[Pre-tensor categories $(\MMM^{\operatorname{tw}}_{\bullet}(R,S), \otimes)$ and $(\MMM^{\operatorname{R; tw}}_{\bullet}(R,S), \wotimes)$]\label{thm:TensorRepCrModSh}
$ $ 

\begin{enumerate}
\item Take a crossed module of shelves $(R,S,\pi,\cdot)$. The tensor product~$\otimes$ from~\eqref{eqn:TensorRepCrModSh} defines a strict pre-tensor structure on its twisted representation category $\MMM^{\operatorname{tw}}_{\bullet}(R,S)$. 
\item Take a crossed module of racks $(R,S,\pi,\cdot)$. The tensor product~$\wotimes$ from~\eqref{eqn:TensorRepCrModSh} and the maps
\begin{align*}
\alpha_{M,M',M''} \colon (M \otimes M') \otimes M'' &\overset{\sim}{\to} M \otimes (M' \otimes M''), \\
(m \otimes m') \otimes m'' &\mapsto m \mop \pi(gr(m'')) \otimes (m' \otimes m'')
\end{align*}
define a pre-tensor structure on $\MMM^{\operatorname{R; tw}}_{\bullet}(R,S)$. 
\end{enumerate}
\end{theorem}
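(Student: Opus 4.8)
The plan is to verify the pre-tensor axioms directly from the explicit descriptions of $\otimes$ and $\wotimes$ recorded in Proposition~\ref{ProdRepsCrModSh}; the bifunctoriality of both products is already supplied by Propositions~\ref{prop:AdjMod} and~\ref{prop:center} (through $E_S^R$, $Z_{SD}$, $\widetilde{Z}_{SD}$), so only the associativity data requires work. For part~(1) I would compute the three structure maps of the two bracketings $(M \otimes M') \otimes M''$ and $M \otimes (M' \otimes M'')$. Since the $S$-action and the twisting map of $\otimes$ are assembled diagonally they are associative on the nose, while the $R$-grading is peripheral, so $gr_{\otimes}$ returns $gr''(m'')$ for both bracketings; as $\Set$ and $\Vect$ are strict monoidal, the underlying objects coincide strictly too. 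Hence the two triple products are literally equal as twisted representations, the associator is the identity, and the pentagon~\eqref{eqn:Pentagon} holds trivially, giving a \emph{strict} pre-tensor structure.

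For part~(2) the diagonal grading $gr_{\widetilde{\otimes}}(m \otimes m') = gr(m) \op gr'(m')$ is no longer strictly associative, since $\op$ need not be associative, so a genuine associator is unavoidable. I would first check that the proposed $\alpha_{M,M',M''}$ is a morphism in $\MMM^{\operatorname{R; tw}}_{\bullet}(R,S)$. It respects the grading: the codomain grading of $m \mop \pi(gr''(m'')) \otimes (m' \otimes m'')$ equals $(gr(m) \cdot \pi(gr''(m''))) \op (gr'(m') \op gr''(m''))$ by the rack compatibility $gr(m \mop s) = gr(m)\cdot s$, which by~\eqref{eqn:CrModSh} becomes $(gr(m) \op gr''(m'')) \op (gr'(m') \op gr''(m''))$, and this matches the domain grading $(gr(m) \op gr'(m'))\op gr''(m'')$ precisely by self-distributivity~\eqref{eqn:SD}. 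It intertwines the twisting maps because $f$ is grading-preserving and action-intertwining, and it is an $S$-module map by the action axiom~\eqref{eqn:ShAction} combined with~\eqref{eqn:CrModSh'}. Next, $\alpha$ is invertible: here the rack hypothesis is essential, as $\alpha$ acts on the first tensorand by $m \mapsto m \mop \pi(gr''(m''))$, a bijection exactly because the representation is an $S$-rack-module. Naturality is then immediate, since morphisms in $\MMM^{\operatorname{R; tw}}_{\bullet}$ preserve gradings and intertwine the $S$-action.

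The heart of the argument, and the step I expect to be the main obstacle, is the pentagon axiom~\eqref{eqn:Pentagon}. I would trace an element $((m_1 \otimes m_2) \otimes m_3) \otimes m_4$ along both sides of the pentagon, abbreviating $r_i = gr_i(m_i)$; at each vertex the bifunctor acts on underlying sets simply by Cartesian/tensor products of the component maps, so the computation is pure bookkeeping of repeated $\mop \pi(\cdot)$ actions and of the diagonal gradings. After the automatic cancellations, the second and third tensor factors of the two composites coincide, and the pentagon reduces to the single identity
\begin{align*}
(m_1 \mop \pi(r_3)) \mop \pi(r_4) &= (m_1 \mop \pi(r_4)) \mop \pi(r_3 \op r_4)
\end{align*}
on the first factor. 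This is precisely the shelf-action axiom~\eqref{eqn:ShAction} with $s = \pi(r_3)$, $\ws = \pi(r_4)$, once one rewrites $\pi(r_3) \op \pi(r_4) = \pi(r_3 \op r_4)$ using that $\pi$ is a shelf morphism. The genuine difficulty is not any single identity but keeping the grading conventions straight across the five vertices—in particular remembering that the grading of a $\wotimes$-factor is itself diagonal, which is exactly what produces the $\pi(r_3 \op r_4)$ term on one side of the pentagon. The linear case follows by the same computations, read diagrammatically.
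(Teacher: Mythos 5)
Your proposal is correct and follows essentially the same route as the paper's proof: part (1) is the same straightforward check that the identity works as associator, and for part (2) you verify exactly as the paper does that $\alpha$ respects gradings (via $gr(m\mop s)=gr(m)\cdot s$, \eqref{eqn:CrModSh} and self-distributivity), intertwines the $S$-actions and twists, is invertible by the rack hypothesis, and is natural. Your reduction of the pentagon to $(m\mop\pi(r_3))\mop\pi(r_4)=(m\mop\pi(r_4))\mop\pi(r_3\op r_4)$, settled by \eqref{eqn:ShAction} together with $\pi$ being a shelf morphism, is precisely the paper's concluding computation.
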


\begin{remark}\label{rmk:TensorUntwisted}
For both pre-tensor structures, usual (i.e., non-twisted) representations form pre-tensor subcategories $\MMM^{(\operatorname{R})}_{\bullet}(R,S)$. 
\end{remark}

\begin{proof}
The verifications for the tensor product~$\otimes$ are straightforward. The tensor product~$\wotimes$ deserves more attention. We have seen that it is a bifunctor. Further, the maps $\alpha_{M,M',M''}$
\begin{itemize}
\item intertwine the $S$-actions, since
\begin{align*}
(m \mop &\pi(gr(m''))) \mop s = (m \mop s) \mop (\pi(gr(m'')) \op s) \\
&=(m \mop s) \mop \pi(gr(m'') \cdot s)=(m \mop s) \mop \pi(gr(m'' \mop s));
\end{align*}
\item intertwine the twisting maps, because of
\begin{align*}
f(m \mop \pi(gr(m''))) &= f(m)\mop \pi(gr(m'')) = f(m)\mop \pi(gr(f(m'')));
\end{align*}
\item respect the $R$-gradings:
\begin{align*}
gr(m \mop &\pi(gr(m''))) \op (gr(m') \op gr(m'')) \\
&= (gr(m) \cdot \pi(gr(m''))) \op (gr(m') \op gr(m'')) \\
&= (gr(m) \op gr(m'')) \op (gr(m') \op gr(m'')) \\
&= (gr(m) \op gr(m')) \op gr(m'');  
\end{align*}
\item are bijective, since the maps $M \to M$, $m \mapsto m \mop \pi(gr(m''))$ are so for the $S$-rack-module $(M, \mop)$.
\end{itemize}
Hence the $\alpha_{M,M',M''}$ are invertible morphisms in $\MMM^{\operatorname{tw}}_{\Set}(R,S)$.
 The naturality of~$\alpha$ follows from the fact that morphisms in $\MMM^{\operatorname{tw}}_{\Set}(R,S)$ preserve the $R$-gradings and intertwine the $S$-actions. It remains to check the pentagon axiom~\eqref{eqn:Pentagon}. Explicitly, its right-hand side sends an element $((m \otimes m') \otimes m'') \otimes m''' \in ((M \otimes M') \otimes M'')\otimes M'''$ to
$$ (m \mop \pi(gr(m''')))\mop \pi(gr_{\widetilde{\otimes}}(m'' \otimes m''')) \otimes (m'\mop \pi(gr(m''')) \otimes (m'' \otimes m''')),$$ 
while the left-hand side sends it to
$$(m \mop \pi(gr(m'')))\mop \pi(gr(m''')) \otimes (m'\mop \pi(gr(m''')) \otimes (m'' \otimes m''')).$$ 
Recalling that~$\pi$ is a shelf morphism, one obtains
\begin{align*}
\pi(gr_{\widetilde{\otimes}}(m'' \otimes m''')) &= \pi(gr(m'') \op gr(m''')) = \pi(gr(m'')) \op \pi(gr(m''')),
\end{align*}
and the defining property of a rack action for~$\mop$ yields
\begin{align*}
(m \mop \pi(gr(m''')))\mop (\pi(gr(m'')) \op& \pi(gr(m'''))) = \\
&(m \mop \pi(gr(m'')))\mop \pi(gr(m''')),
\end{align*}
hence our pentagon axiom is satisfied.
\end{proof}

\begin{remark}\label{rmk:TensorShelves}
For the twisted representation category $\MMM^{\operatorname{tw}}_{\bullet}(R,S)$ of a crossed module of \emph{shelves}, the tensor product~$\wotimes$ and the $\alpha_{M,M',M''}$ above satisfy all the pre-tensor structure axioms except for the invertibility of $\alpha$.
\end{remark}

Let us next study the unitality of our categories. In order to admit an isomorphism $I \otimes V \overset{\sim}{\to} V$ or $V \otimes I \overset{\sim}{\to} V$, the unit~$I$ has to be a one-element set with a twisted representation structure over $(R,S,\pi,\cdot)$, or, in other words, a Yetter-Drinfel$'$d character (Definition~\ref{def:char_YD}) for the corresponding rank~$2$ braided system (Proposition~\ref{prop:CrModShBrSyst}). The $S$-action and the twisting have to be the unique maps $S \to I$ and $I\to I$ respectively. Further, the single element of~$I$ should be graded by an \emph{$S$-invariant} element $r_0 \in R$, in the sense of $r_0 \cdot s = r_0$ for all $s \in S$. Summarizing, one gets

\begin{proposition}[YD characters in $\YD_S^R$]\label{prop:YDCharSh} 
$ $

For a crossed module of shelves $(R,S,\pi,\cdot)$, a complete list of Yetter-Drinfel$'$d characters in $\YD_S^R$ (up to isomorphism) is indexed by $S$-invariant elements $r_0 \in R$, and given by the structures
$$I_{r_0} \,= \,\big(\, \{\ast\},\, \rho_{r_0} \colon s \mapsto \ast,\, \delta_{r_0} \colon \ast \mapsto r_0 \, \big).$$
\end{proposition}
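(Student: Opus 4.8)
The plan is to unwind Definition~\ref{def:char_YD} completely in the set-theoretic setting and to observe that almost every condition becomes vacuous, so that the single surviving constraint pins down exactly the $S$-invariant elements of~$R$. Since $\C = \Set$ has unit object $I = \{\ast\}$ and $\otimes$ is the Cartesian product, a YD character over $(R,S;\osigma)$ (with $C = R$, $A = S$ as in Proposition~\ref{prop:CrModShBrSyst}) consists of an action $\rho \colon \{\ast\} \times S \to \{\ast\}$ and a coaction $\delta \colon \{\ast\} \to \{\ast\} \times R$ satisfying the braided (co)module axioms and the compatibility~\eqref{eqn:gen_YD}. The action~$\rho$ is the unique map into the terminal object~$\{\ast\}$, and the braided module relation~\eqref{eqn:BrMod} holds automatically, since both of its sides are maps into~$\{\ast\}$; hence $\rho = \rho_{r_0}$ is forced and is always a braided character. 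The coaction~$\delta$ amounts to a choice of a single element $r_0 \in R$, via $\delta(\ast) = (\ast, r_0)$, i.e. $\delta_{r_0} \colon \ast \mapsto r_0$.

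First I would check that every such~$\delta_{r_0}$ is a braided cocharacter, i.e. satisfies the comodule axiom dual to~\eqref{eqn:BrMod}. For $\sigma_{C,C} = \sigma_{coAss} \colon r \otimes r' \mapsto r' \otimes r'$ this is immediate: both $(\delta_{r_0} \otimes R) \circ \delta_{r_0}$ and its composite with $(M \otimes \sigma_{coAss})$ send~$\ast$ to $(\ast, r_0, r_0)$, so the relation holds for every~$r_0$. Thus the braided character and cocharacter data impose no constraint at all, and the entire content of the YD character condition is concentrated in~\eqref{eqn:YDChar} (equivalently~\eqref{eqn:gen_YD} applied to $M = \{\ast\}$).

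Next I would evaluate both sides of~\eqref{eqn:YDChar} with $\nu_C = \delta_{r_0}$ and $\varepsilon_A$ the unique map $S \to \{\ast\}$, tracing through the canonical identification $\II \otimes A \cong A$. The left-hand side $\nu_C \circ \varepsilon_A$ sends every $s \in S$ to~$r_0$. For the right-hand side, applying $\nu_C \otimes A$ produces $(r_0, s)$, then $\sigma_{C,A} \colon r \otimes s \mapsto s \otimes (r \cdot s)$ yields $(s, r_0 \cdot s)$, and finally $\varepsilon_A \otimes C$ discards the first factor, leaving~$r_0 \cdot s$. Hence~\eqref{eqn:YDChar} is equivalent to $r_0 \cdot s = r_0$ for all $s \in S$, which is precisely the requirement that~$r_0$ be an $S$-invariant element of~$R$. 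This identifies the structures $I_{r_0}$ with $S$-invariant~$r_0$ as exactly the YD characters of the system.

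Finally, to justify the indexing ``up to isomorphism'', I would note that a morphism $I_{r_0} \to I_{r_0'}$ in~$\YD_S^R$ is the unique map $\{\ast\} \to \{\ast\}$ subject to the module- and comodule-morphism conditions: the module condition is automatic, while the comodule condition forces $r_0 = r_0'$. Thus distinct $S$-invariant elements yield pairwise non-isomorphic characters, completing the classification. No step here presents a genuine obstacle; the only points requiring care are the bookkeeping of the isomorphisms $\II \otimes X \cong X$ when evaluating~\eqref{eqn:YDChar}, and the recognition that the braided (co)character axioms are automatically satisfied in~$\Set$, so that the $S$-invariance of~$r_0$ emerges as the unique essential condition.
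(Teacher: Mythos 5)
Your proof is correct and follows essentially the same route as the paper's (very terse) justification: the action and the twisting map on a one-point set are forced, the braided (co)module axioms are vacuous, and the only surviving constraint --- whether read off from~\eqref{eqn:YDChar} as you do, or from the grading compatibility $M_r \mop s \subseteq M_{r\cdot s}$ as the paper does --- is the $S$-invariance $r_0 \cdot s = r_0$. Your explicit check that non-isomorphic characters correspond to distinct $S$-invariant elements is a welcome detail the paper leaves implicit.
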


\begin{example}
A shelf $(S,\op)$ is called \emph{pointed} if it contains a preferred element~$e$ satisfying $e \op s = e$, $s \op e = s$ for all $s \in S$. A conjugation rack yields a classical example, with the neutral element of the underlying group chosen as~$e$. The crossed module of shelves $(S,S,\Id_S,\op)$ associated to a pointed shelf (Example~\ref{ex:AdjRep}) comes with the YD character~$I_{e}$ in $\YD_S^S$.
\end{example}

A YD character~$I_{r_0}$ can be seen as a left unit for the tensor structure~$\otimes$ on $\MMM^{\operatorname{tw}}_{\bullet}(R,S)$, since the maps $\lambda_{M} \colon I_{r_0} \otimes M \overset{\sim}{\to} M$, $\ast \otimes m \mapsto m$ define a natural isomorphism. On the other hand, $I_{r_0}$ can be seen as a right unit for the tensor structure~$\wotimes$, since the maps $\rho_{M} \colon M \wotimes I_{r_0} \overset{\sim}{\to} M$, $m \wotimes \ast \mapsto m \cdot \pi(r_0)$ define a natural morphism, which becomes an isomorphism in the case of rack-modules. Unfortunately, the authors do not know how to complete at least one of these structures into a whole unit constraint.

Finally, recall the braidings $\sigma_{TwCrModSh}$ for objects in $\MMM^{\operatorname{tw}}_{\bullet}(R,S)$ (Remark~\ref{rmk:BrYDSR}). They are natural candidates for forming a commutativity constraint for $(\MMM^{\operatorname{tw}}_{\bullet}(R,S), \otimes)$ or $(\MMM^{\operatorname{R; tw}}_{\bullet}(R,S), \wotimes)$. However, these maps do not respect the $R$-gradings, so they are not even morphisms in the corresponding categories! On the other hand, they intertwine the $S$-actions and the twistings, form a natural family, and in the rack case admit inverses.

The representation category of a crossed module of Leibniz algebras is also pre-tensor with an interesting associator, as we now establish.

\begin{proposition}[Representations of a crossed module of Leibniz algebras as enriching structures]\label{RepCrModLAAsEnrich}
$ $

Take a crossed module of Leibniz algebras $(\k, \g, \pi, \cdot)$, and consider the braided system $(\kp, \gp; \osigma)$ from Proposition~\ref{prop:CrModLABrSyst}. Then one has a functor
\begin{align*}
Z_{CrModLA} \colon \YD_{\gp}^{\kp} &\to \ZZZ_{\gp}^{\kp},\\
(M,\ast,\delta) &\mapsto (M,\sigma_{\kp,M},\sigma_{M,\gp}),\\
\text{where} \qquad &\sigma_{\kp,M} (k \otimes m) = m_{(0)} \otimes k \cdot m_{(1)},\\
&\sigma_{M,\gp}(m \otimes g) = g_{(1)} \otimes m\ast g_{(2)},
\end{align*}
using the usual Sweedler's notations for the $\kp$-coaction $\delta$ on~$M$ and for the comultiplication~$\Delta$ on~$\gp$, as well as the unitarized adjoint action~$\cdot$ of~$\kp$ on itself (Lemma~\ref{l:Unit}).
\end{proposition}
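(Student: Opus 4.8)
The plan is to show that $Z_{CrModLA}$ is a well-defined functor, which by Definition~\ref{def:center} means two things: that every object $(M,\ast,\delta)$ of $\YD_{\gp}^{\kp}$ is sent to an enriching structure, i.e.\ that $(\kp,M,\gp;\osigma,\sigma_{\kp,M},\Id_{M\otimes M},\sigma_{M,\gp})$ is a braided system; and that every morphism of $\YD_{\gp}^{\kp}$ satisfies the naturality conditions~\eqref{eqn::Nat1}--\eqref{eqn::Nat2}. Since the $M$-$M$ component is taken to be $\Id_{M\otimes M}$, the only instances of~\eqref{eqn:cYBE} that are not automatic are the three containing exactly one copy of $M$: on $\kp\otimes\kp\otimes M$, on $\kp\otimes M\otimes\gp$, and on $M\otimes\gp\otimes\gp$. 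First I would verify these three by explicit Sweedler computations, along the lines of Proposition~\ref{YDModAsEnrich}.

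The computations are best organised around the uniform description of Remark~\ref{rmk:AdjActionLA}: each of $\sigma_{A,A}$, $\sigma_{C,A}$, $\sigma_{\kp,M}$, $\sigma_{M,\gp}$ splits off one coproduct leg of the object being acted upon and then applies an action. After inserting coassociativity, the three instances of~\eqref{eqn:cYBE} unwind into identities that each match a single relation of Lemma~\ref{l:YDforCrModLA}, modulo the following two elementary facts: (i) the unitarised adjoint actions of $\kp$ on $\kp$ and of $\gp$ on $\gp$ are morphisms of coalgebras, i.e.\ $\Delta(k\cdot\wk)=k_{(1)}\cdot\wk_{(1)}\otimes k_{(2)}\cdot\wk_{(2)}$ (and likewise on $\gp$) and $\varepsilon(k\cdot\wk)=\varepsilon(k)\varepsilon(\wk)$; and (ii) the $\gp$-action on $\kp$ is by derivations in the Sweedler form $(k\cdot\wk)\cdot g=(k\cdot g_{(1)})\cdot(\wk\cdot g_{(2)})$. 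Both (i) and (ii) are immediate from the primitivity of the generators and the cocommutativity of $\Delta$; note that (ii) holds for \emph{all} $g\in\gp$, including the unit, precisely because $\Delta(1)=1\otimes 1$, even though the naive derivation identity fails for $g=1$ (this is why the unitarisation of Lemma~\ref{l:Unit} is the right framework). Concretely: the $\kp\otimes\kp\otimes M$ instance reduces to~\eqref{eqn:YDforCrModLA'''} together with (i) for the $\kp$-adjoint action and the definition of $f$; the $M\otimes\gp\otimes\gp$ instance reduces to the braided-module relation~\eqref{eqn:YDforCrModLA} together with (i) for the $\gp$-adjoint action and cocommutativity; and the mixed $\kp\otimes M\otimes\gp$ instance reduces to the coaction-of-action relation~\eqref{eqn:YDforCrModLA'} together with the derivation property (ii) and cocommutativity.

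For morphisms, a one-line computation shows that~\eqref{eqn::Nat2} for $\varphi$ is implied by $\varphi$ being a map of $\gp$-modules, and~\eqref{eqn::Nat1} by $\varphi$ being a map of $\kp$-comodules; as morphisms of $\YD_{\gp}^{\kp}$ preserve both structures, the naturality conditions hold, and compatibility with composition and identities is clear. This completes the verification that $Z_{CrModLA}$ is a functor.

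I expect the mixed cYBE on $\kp\otimes M\otimes\gp$ to be the main obstacle: it is the only instance in which the genuine Yetter--Drinfel$'$d compatibility~\eqref{eqn:YDforCrModLA'} and the derivation property (ii) must be used at once, and reconciling the two sides requires permuting several coproduct legs of a single $g\in\gp$ via cocommutativity while tracking carefully the difference between the $\kp$-adjoint action and the $\gp$-action on $\kp$. I would also flag the point that explains why the statement asserts only a functor, in contrast to the full embeddings of Proposition~\ref{YDModAsEnrich}: the coaction cannot be recovered from $\sigma_{\kp,M}$, since feeding in the unit gives $\sigma_{\kp,M}(1\otimes m)=f(m)\otimes 1$, so only the idempotent $f$ survives and the datum $\delta_0$ is lost. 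Hence $Z_{CrModLA}$ need be neither injective on objects nor full, and no stronger claim should be attempted.
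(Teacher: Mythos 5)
Your proposal is correct and follows exactly the route the paper intends: the paper's own proof is a one-line reference to the argument of Proposition~\ref{YDModAsEnrich} (check the three instances of~\eqref{eqn:cYBE} involving one copy of~$M$, then check the naturality conditions \eqref{eqn::Nat1}--\eqref{eqn::Nat2} for morphisms), which is precisely the skeleton you flesh out, and your reductions of the three cYBE instances to relations \eqref{eqn:YDforCrModLA}--\eqref{eqn:YDforCrModLA'''} together with the coalgebra-morphism and Sweedler-derivation properties of the unitarized actions all check out. Your closing observation that $\sigma_{\kp,M}$ does not determine~$\delta$, so that only a functor (and not an inclusion) can be claimed, matches the remark the paper makes immediately after its proof.
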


The proof is similar to that of Proposition~\ref{YDModAsEnrich}. Note that, in contrast to the situation there, one has no hope of having a category inclusion here, since the map~$\sigma_{\kp,M}$ is not sufficient for reconstructing the coaction~$\delta$ in general.  

As usual, combining~$Z_{CrModLA}$ with the functor~$E_{\gp}^{\kp}$ (Proposition~\ref{prop:center}), one obtains a bifunctor $\otimes = E_{\gp}^{\kp} \circ(\Id \times Z_{CrModLA})$ on~$\YD_{\gp}^{\kp}$, restricting to a bifunctor on the representation category $\MMM(\k,\g)$. The corresponding product structure is explicitly written as follows:

\begin{proposition}[Product of representations of a crossed module of Leibniz algebras]\label{ProdRepsCrModLA}
$\quad$

Given a crossed module of Leibniz algebras $(\k, \g, \pi, \cdot)$ and two YD modules $(M, \ast, \delta)$ and $(M', \ast', \delta')$ over the braided system $(\kp, \gp; \osigma)$ (Proposition~\ref{prop:CrModLABrSyst}), their product $M \otimes M'$ can be endowed with the following YD module structure:
\begin{itemize}
\item the unit $1 \in \gp$ acts on $M \otimes M'$ by the identity, and elements $g \in \g$ according to the Leibniz rule: 
$$(m \otimes m') \ast_{\otimes} g = m \otimes m'\ast' g  + m\ast g  \otimes m';$$
\item the $\kp$-coaction is given by
$$\delta_{\otimes}(m \otimes m') = m_{(0)}\otimes m'_{(0)} \otimes m_{(1)} \cdot m'_{(1)},$$
with the same notations as in Proposition~\ref{RepCrModLAAsEnrich}.
\end{itemize}
\end{proposition}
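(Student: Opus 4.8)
My plan is to observe that the substantive claim---that $M \otimes M'$ carries a YD module structure over $(\kp,\gp;\osigma)$---requires no new work, since it is exactly the output of the composite bifunctor $\otimes = E_{\gp}^{\kp} \circ (\Id \times Z_{CrModLA})$ introduced just before the statement. Indeed, $Z_{CrModLA}$ (Proposition~\ref{RepCrModLAAsEnrich}) turns $(M',\ast',\delta')$ into an enriching structure, and $E_{\gp}^{\kp}$ (Proposition~\ref{prop:center}) then enriches $(M,\ast,\delta)$ by it; the YD property is guaranteed by Proposition~\ref{prop:AdjMod}. The entire content of the proposition is thus to unwind the formulas $\delta' = (M \otimes \sigma_{\kp,M'}) \circ (\delta \otimes M')$ and $\rho' = (\rho \otimes M') \circ (M \otimes \sigma_{M',\gp})$ from Proposition~\ref{prop:AdjMod}, to substitute the explicit $\sigma_{\kp,M'}$ and $\sigma_{M',\gp}$ supplied by $Z_{CrModLA}$, and to check that the result reduces to the two displayed expressions.

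For the coaction I would feed $m \otimes m'$ first through $\delta \otimes M'$ and then through $M \otimes \sigma_{\kp,M'}$, using $\sigma_{\kp,M'}(k \otimes m') = m'_{(0)} \otimes k \cdot m'_{(1)}$; this immediately produces $m_{(0)} \otimes m'_{(0)} \otimes m_{(1)} \cdot m'_{(1)}$, with $\cdot$ the unitarized adjoint action of $\kp$ on itself, which is precisely the claimed formula. For the action I would apply $M \otimes \sigma_{M',\gp}$ and then $\rho \otimes M'$ to $m \otimes m' \otimes g$, using $\sigma_{M',\gp}(m' \otimes g) = g_{(1)} \otimes m' \ast' g_{(2)}$, obtaining the intermediate expression $(m \ast g_{(1)}) \otimes (m' \ast' g_{(2)})$.

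The last step is to specialize the comultiplication of $\gp$ from Lemma~\ref{l:Unit}. For the unit, $\Delta(1) = 1 \otimes 1$ together with the identity action of $1$ on each factor gives the identity action on $M \otimes M'$; for $g \in \g$, $\Delta(g) = g \otimes 1 + 1 \otimes g$ together with $m \ast 1 = m$ and $m' \ast' 1 = m'$ yields the Leibniz-type formula $m \otimes m' \ast' g + m \ast g \otimes m'$. This exhausts the assertions to be proved.

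I expect no genuine obstacle here: the verification is purely a matter of composing the definitions, and the preceding propositions do all the structural work. The only place demanding attention is the careful bookkeeping of the two unitarized structures furnished by Lemma~\ref{l:Unit}---the appearance of the \emph{adjoint} $\kp$-action inside the coaction and the identity action of $1 \in \gp$ inside the action---together with keeping Sweedler's notation for the two distinct comultiplications (on $\kp$ via $\delta$, and on $\gp$ via $\Delta$) from being conflated.
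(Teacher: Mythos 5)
Your proposal is correct and follows exactly the route the paper intends: the paper offers no separate proof for this proposition, presenting it as the explicit unwinding of the bifunctor $E_{\gp}^{\kp} \circ(\Id \times Z_{CrModLA})$, whose well-definedness rests on Propositions~\ref{prop:AdjMod}, \ref{prop:center} and~\ref{RepCrModLAAsEnrich}. Your substitution of $\sigma_{\kp,M'}$ and $\sigma_{M',\gp}$ into the formulas of Proposition~\ref{prop:AdjMod}, followed by the specialization $\Delta(1)=1\otimes 1$ and $\Delta(g)=g\otimes 1+1\otimes g$ (using normalization of the actions by $1$), reproduces the stated formulas exactly.
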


\begin{theorem}[Pre-tensor structure on~$\MMM(\k,\g)$]\label{thm:TensorRepCrModLA}
$ $ 

Take a crossed module of Leibniz algebras $(\k, \g, \pi, \cdot)$. Consider the tensor product~$\otimes$ from Proposition~\ref{ProdRepsCrModLA} and, for $M,M',M'' \in \YD_{\gp}^{\kp}$, the maps
\begin{align*}
\alpha_{M,M',M''} \colon (M \otimes M') \otimes M'' &\to M \otimes (M' \otimes M''), \\
(m \otimes m') \otimes m'' &\mapsto m \ast \pi(m''_{(1)}) \otimes (m' \otimes m''_{(0)}).
\end{align*}
\begin{enumerate}
\item These data satisfy all the pre-tensor structure axioms except for the invertibility of~$\alpha$.
\item The YD module $(\kk, \varepsilon, \nu)$, with $\nu(1) = 1 \in \kp$, is a strict right unit for this structure.
\item Restricted to the representation category $\MMM(\k,\g)$, this yields a genuine pre-tensor structure with a right unit.
\end{enumerate}
\end{theorem}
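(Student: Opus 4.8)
The plan is to handle the three assertions in turn, reducing everything to the structural relations of Lemma~\ref{l:YDforCrModLA} together with the unitarized crossed-module identities~\eqref{eqn:CrModLAUn}--\eqref{eqn:CrModLAUn''}. Since bifunctoriality of~$\otimes$ on $\YD_{\gp}^{\kp}$ is already granted (Propositions~\ref{ProdRepsCrModLA} and~\ref{prop:center}), the substance of~(1) is that each $\alpha_{M,M',M''}$ is a morphism in $\YD_{\gp}^{\kp}$, that the family is natural, and that the pentagon holds. I would first check that $\alpha$ intertwines the diagonal actions: writing the action on a tensor product as $(m \otimes m') \ast_{\otimes} g = m \ast g_{(1)} \otimes m' \ast g_{(2)}$ via the comultiplication of~$\gp$, this is a short computation resting on~\eqref{eqn:YDforCrModLA} and~\eqref{eqn:YDforCrModLA'}. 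The delicate half is that $\alpha$ preserves the coaction $\delta_{\otimes}$: the source and target carry the gradings $(m_{(1)} \cdot m'_{(1)}) \cdot m''_{(1)}$ and $m_{(1)} \cdot (m'_{(1)} \cdot m''_{(1)})$, which differ precisely by the non-associativity of the adjoint bracket, so the action factor $m \ast \pi(m''_{(1)})$ built into $\alpha$ is exactly what compensates. I would expand both composites in Sweedler notation and reconcile them using~\eqref{eqn:YDforCrModLA'} to coact through the action, \eqref{eqn:CrModLAUn} and~\eqref{eqn:CrModLAUn'''} to convert the $\gp$-action on~$\kp$ into brackets, and crucially~\eqref{eqn:YDforCrModLA'''} to collapse the naively threefold iterated coaction of~$m''$ into a single grading split by~$\Delta$; the Leibniz identity for~$\cdot$ (the relation displayed in the proof of Proposition~\ref{prop:CrModLABrSyst}) then matches the two bracketings. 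Naturality of~$\alpha$ is immediate, since morphisms in $\YD_{\gp}^{\kp}$ commute with $\ast$, with~$\pi$, and with~$\delta$.

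For the pentagon axiom~\eqref{eqn:Pentagon} I would regard all bracketings of $M \otimes M' \otimes M'' \otimes M'''$ as one and the same space (the underlying product is strict) and write each of the five associators as an explicit operator in Sweedler calculus on elements $m \otimes m' \otimes m'' \otimes m'''$. Tracing the two composites, the only genuine rearrangement is that one side acts the grading of~$m'''$ on~$m$ as $(m \ast \pi(m''_{(1)})) \ast \pi(m'''_{(2)})$, whereas the other first forms $m \ast \pi(m'''_{(1)})$ and then acts by a bracket of the form $\pi(m''_{(1)}) \cdot \pi(m'''_{(0)})$; applying~\eqref{eqn:YDforCrModLA} to the first expression together with the multiplicativity~\eqref{eqn:CrModLAUn''}--\eqref{eqn:CrModLAUn'''} of~$\pi$ turns one into the other, after which coassociativity and cocommutativity of the comultiplications on~$\kp$ and~$\gp$ identify the remaining Sweedler indices. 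I expect this bookkeeping of iterated coactions to be the main obstacle, with~\eqref{eqn:YDforCrModLA'''} the decisive simplification throughout.

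Assertion~(2) is a direct verification. Identifying $M \otimes \kk$ with~$M$ via $m \otimes 1 \mapsto m$, the counit axiom gives $(m \otimes 1) \ast_{\otimes} g = m \ast g_{(1)}\,\varepsilon(g_{(2)}) \otimes 1 = m \ast g \otimes 1$, and since $1 \in \kp$ acts on itself by $k \cdot 1 = k$ (Lemma~\ref{l:Unit}) the coaction becomes $\delta_{\otimes}(m \otimes 1) = m_{(0)} \otimes 1 \otimes m_{(1)}$, i.e.\ $\delta(m)$; moreover $\alpha_{M,M',\kk} = \Id$ because $\nu(1) = 1 \in \kp$, $\pi(1) = 1 \in \gp$, and $m \ast 1 = m$. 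Thus $(\kk,\varepsilon,\nu)$ is a strict right unit (the natural isomorphism $-\otimes(\kk,\varepsilon,\nu) \cong \Id$ being the canonical identification).

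Finally, for~(3) I would first note that $\kk \in \MMM(\k,\g)$, since its reduced coaction vanishes, and that $\otimes$ preserves the representation subcategory: a YD module lies in $\MMM(\k,\g)$ exactly when its coaction is normalized (Proposition~\ref{prop:CrModLACatIso}), and $(M \otimes \varepsilon) \circ \delta_{\otimes} = \Id$ follows from the identity $\varepsilon(k \cdot \wk) = \varepsilon(k)\varepsilon(\wk)$ for the unitarized adjoint action. The remaining point is invertibility of~$\alpha$ on $\MMM(\k,\g)$. Writing $\delta = \delta_0 + (\,\cdot\, \otimes 1)$, the associator splits as $\alpha = \Id + \Phi$, where $\Phi((m \otimes m') \otimes m'') = m \ast \pi(m''_{(1)}) \otimes m' \otimes m''_{(0)}$ is now taken with respect to~$\delta_0$ (the unit part of $\delta(m'')$ contributes $m \ast \pi(1) = m$, i.e.\ the identity). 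Iterating, $\Phi^2$ feeds $(\delta_0 \otimes \k) \circ \delta_0$ into its own formula, which vanishes by the square-zero condition; hence $\Phi^2 = 0$ and $\alpha^{-1} = \Id - \Phi$. This is precisely the feature that fails for non-normalized modules, explaining why invertibility is the single axiom lost in part~(1). Combined with~(2), this gives a genuine pre-tensor structure with a right unit on $\MMM(\k,\g)$.
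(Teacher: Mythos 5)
Your proposal is correct and follows essentially the same route as the paper: parts (1) and (2) are handled by the same Sweedler-calculus verifications using \eqref{eqn:YDforCrModLA}--\eqref{eqn:YDforCrModLA4} and \eqref{eqn:CrModLAUn}--\eqref{eqn:CrModLAUn''}, with \eqref{eqn:YDforCrModLA'''} doing the decisive work in the coaction-intertwining and pentagon computations, exactly as in the paper. For part (3) your decomposition $\alpha = \Id + \Phi$ with $\Phi^2 = 0$ (from the square-zero property of $\delta_0$) yields $\alpha^{-1} = \Id - \Phi$, which is precisely the paper's explicit inverse $m \otimes (m' \otimes m'') \mapsto (m \ast S(\pi(m''_{(1)})) \otimes m') \otimes m''_{(0)}$ with the antipode-like map $S$, so this is the same argument presented as a derivation rather than a verification.
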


\begin{proof}
\begin{enumerate}
\item We first show that $\alpha_{M,M',M''}$ is a morphism in~$\YD_{\gp}^{\kp}$. To show that it intertwines $\gp$-actions, one needs to check that
\begin{align*}
& (m\ast g_{(1)}) \ast \pi((m''\ast g_{(3)})_{(1)}) \otimes (m'\ast g_{(2)} \otimes (m''\ast g_{(3)})_{(0)}) =\\
& (m \ast \pi(m''_{(1)}))\ast g_{(1)} \otimes (m'\ast g_{(2)} \otimes m''_{(0)}\ast g_{(3)})
\end{align*}
for all $g \in \gp$. Using the cocommutativity of the comultiplication on~$\gp$ and the compatibility relation~\eqref{eqn:YDforCrModLA'} between~$\delta$ and~$\ast$, the first expression rewrites as
$$(m\ast g_{(1)}) \ast \pi(m''_{(1)}\cdot g_{(2)}) \otimes (m'\ast g_{(3)} \otimes m''_{(0)}\ast g_{(4)}).$$
Due to~\eqref{eqn:CrModLAUn'} and~\eqref{eqn:YDforCrModLA}, one has
\begin{align*}
(m\ast g_{(1)}) \ast \pi(m''_{(1)}\cdot g_{(2)}) & = (m\ast g_{(1)}) \ast (\pi(m''_{(1)})\cdot g_{(2)})\\
& = (m \ast \pi(m''_{(1)})\ast g_{(1)},
\end{align*}
so the desired expressions coincide.

We next verify that $\alpha_{M,M',M''}$ intertwines $\kp$-coactions. One calculates
\begin{align*}
\delta(m \ast \pi(m''_{(1)})) &\overset{\eqref{eqn:YDforCrModLA'}}{=}m_{(0)} \ast (\pi(m''_{(1)}))_{(1)} \otimes m_{(1)} \cdot (\pi(m''_{(1)}))_{(2)}\\
&\overset{\eqref{eqn:CrModLAUn''}}{=}m_{(0)} \ast \pi((m''_{(1)})_{(1)}) \otimes m_{(1)} \cdot \pi((m''_{(1)})_{(2)})\\
&\overset{\eqref{eqn:CrModLAUn}}{=}m_{(0)} \ast \pi((m''_{(1)})_{(1)}) \otimes m_{(1)} \cdot (m''_{(1)})_{(2)},
\end{align*}
The desired intertwining relation then rewrites as
\begin{align*}
& (m \ast \pi(m''_{(1)}) \otimes (m' \otimes m''_{(0)})) \otimes (m_{(1)} \cdot m'_{(1)}) \cdot m'_{(2)}= \\
& (m_{(0)} \ast \pi((m''_{(2)})_{(1)}) \otimes (m'_{(0)} \otimes m''_{(0)})) \otimes (m_{(1)} \cdot (m''_{(2)})_{(2)}) \cdot (m'_{(1)} \cdot m''_{(1)}).
\end{align*}
Using relation~\ref{eqn:YDforCrModLA'''} and the cocommutativity of the comultiplication on~$\kp$, the latter expression equals
\begin{align*}
& (m_{(0)} \ast \pi((m''_{(1)})_{(1)}) \otimes (m'_{(0)} \otimes f(m''_{(0)}))) \otimes (m_{(1)} \cdot (m''_{(1)})_{(2)}) \cdot (m'_{(1)} \cdot (m''_{(1)})_{(3)}),
\end{align*}
which using~\eqref{eqn:YDforCrModLA} for the adjoint action on~$\kp$ becomes
\begin{align*}
& (m_{(0)} \ast \pi((m''_{(1)})_{(1)}) \otimes (m'_{(0)} \otimes f(m''_{(0)}))) \otimes (m_{(1)} \cdot m'_{(1)}) \cdot (m''_{(1)})_{(2)}).
\end{align*}
One more application of~\ref{eqn:YDforCrModLA'''} transforms our expression into the desired form. 

The naturality of~$\alpha$ is straightforward. It remains to verify the pentagon axiom, which here reads
\begin{align*}
  &(m \ast \pi(m''_{(1)})) \ast \pi(m'''_{(2)}) \otimes
 (m'\ast \pi(m'''_{(1)}) \otimes (m''_{(0)} \otimes m'''_{(0)}))= \\
  (m \ast &\pi((m'''_{(2)})_{(1)})) \ast \pi(m''_{(1)} \cdot m'''_{(1)}) \otimes
 (m'\ast \pi((m'''_{(2)})_{(2)}) \otimes (m''_{(0)} \otimes m'''_{(0)})).
\end{align*}
It is done by an argument similar to those used for intertwining properties, juggling relations from Lemmas~\ref{l:Unit} and~\ref{l:YDforCrModLA}.

\item Straightforward verifications.

\item One easily checks that the inverse of~$\alpha_{M,M',M''}$ is given by the map
\begin{align*}
m \otimes (m' \otimes m'') &\mapsto (m \ast S(\pi(m''_{(1)})) \otimes m') \otimes m''_{(0)},
\end{align*}
where $S \colon \gp \to \gp$ is the ``antipode-like'' map defined by $S(1)=1$ and $S(g) = -g$ for $g \in \g$. \qedhere
\end{enumerate} 
\end{proof}

\begin{remark}
The associators from Theorems~\ref{thm:TensorRepCrModSh} and~\ref{thm:TensorRepCrModLA} can be written in a uniform way as 
$(m \otimes m') \otimes m'' \mapsto \underline{m}  \otimes (m' \otimes \underline{m''}),$ 
using the formal notation $\sigma_{genYD}(m \otimes m'') = \underline{m''} \otimes \underline{m}$. The resemblance between the two pre-tensor structures is more than a simple coincidence: crossed modules of both shelves and Leibniz algebras can be unified in the framework of \emph{categorical shelves}, developed in~\cite{CatSelfDistr,Lebed3,RackBig}.
\end{remark}

As in the case of representations of crossed modules of shelves, the braidings~$\sigma_{CrModLA}$ from Theorem~\ref{thm:SrModLABr} are not $\kp$-comodule maps in general, and thus do not provide a braided structure on our pre-tensor category $\MMM(\k,\g)$.

Summing up, we have constructed several new pre-tensor categories with global braidings (in the Yang-Baxter sense) which do not stem from a braiding structure on the category. It would be interesting to determine whether our braidings can be rendered categorical for a different (pre-)tensor structure on $\MMM^{\operatorname{(R;) tw}}_{\bullet}(R,S)$ or $\MMM(\k,\g)$, or there is a conceptual reason preventing such a structure to exist.

\footnotesize
\bibliographystyle{abbrv}
\bibliography{biblio}

\begin{thebibliography}{10}

\bibitem{RackBig}
C.~{Alexandre}, M.~{Bordemann}, S.~{Riviere}, and F.~{Wagemann}.
\newblock {Rack-bialgebras and star-products on duals of {L}eibniz algebras}.
\newblock {\em ArXiv e-prints}, Dec. 2014.

\bibitem{Bantay}
P.~Bantay.
\newblock Characters of crossed modules and premodular categories.
\newblock In {\em Moonshine: the first quarter century and beyond}, volume 372
  of {\em London Math. Soc. Lecture Note Ser.}, pages 1--11. Cambridge Univ.
  Press, Cambridge, 2010.

\bibitem{Beck}
J.~Beck.
\newblock Distributive laws.
\newblock In {\em Sem. on {T}riples and {C}ategorical {H}omology {T}heory
  ({ETH}, {Z}\"urich, 1966/67)}, pages 119--140. Springer, Berlin, 1969.

\bibitem{EntwAlgCoalg}
T.~Brzezi{\'n}ski and S.~Majid.
\newblock Coalgebra bundles.
\newblock {\em Comm. Math. Phys.}, 191(2):467--492, 1998.

\bibitem{CaMiZhu}
S.~Caenepeel, G.~Militaru, and S.~Zhu.
\newblock Crossed modules and {D}oi-{H}opf modules.
\newblock {\em Israel J. Math.}, 100:221--247, 1997.

\bibitem{CatSelfDistr}
J.~S. Carter, A.~S. Crans, M.~Elhamdadi, and M.~Saito.
\newblock Cohomology of categorical self-distributivity.
\newblock {\em J. Homotopy Relat. Struct.}, 3(1):13--63, 2008.

\bibitem{CransWag}
A.~Crans and F.~Wagemann.
\newblock Crossed modules of racks.
\newblock {\em Homology, Homotopy and its Appl.}, 16(2):85--106, 2014.

\bibitem{Crans}
A.~S. Crans.
\newblock {\em Lie 2-algebras}.
\newblock ProQuest LLC, Ann Arbor, MI, 2004.
\newblock Thesis (Ph.D.)--University of California, Riverside.

\bibitem{Cuvier}
C.~Cuvier.
\newblock Homologie de {L}eibniz et homologie de {H}ochschild.
\newblock {\em C. R. Acad. Sci. Paris S\'er. I Math.}, 313(9):569--572, 1991.

\bibitem{FRT2}
L.~Faddeev, N.~Reshetikhin, and L.~Takhtajan.
\newblock Quantum groups.
\newblock In {\em Braid group, knot theory and statistical mechanics}, volume~9
  of {\em Adv. Ser. Math. Phys.}, pages 97--110. World Sci. Publ., Teaneck, NJ,
  1989.

\bibitem{FRT1}
L.~D. Faddeev, N.~Y. Reshetikhin, and L.~A. Takhtajan.
\newblock Quantization of {L}ie groups and {L}ie algebras.
\newblock In {\em Algebraic analysis, {V}ol.\ {I}}, pages 129--139. Academic
  Press, Boston, MA, 1988.

\bibitem{FoxMarkl}
T.~F. Fox and M.~Markl.
\newblock Distributive laws, bialgebras, and cohomology.
\newblock In {\em Operads: {P}roceedings of {R}enaissance {C}onferences
  ({H}artford, {CT}/{L}uminy, 1995)}, volume 202 of {\em Contemp. Math.}, pages
  167--205. Amer. Math. Soc., Providence, RI, 1997.

\bibitem{Hennings}
M.~A. Hennings.
\newblock On solutions to the braid equation identified by {W}oronowicz.
\newblock {\em Lett. Math. Phys.}, 27(1):13--17, 1993.

\bibitem{Joyce}
D.~Joyce.
\newblock A classifying invariant of knots, the knot quandle.
\newblock {\em J. Pure Appl. Algebra}, 23(1):37--65, 1982.

\bibitem{KraSle}
U.~{Kraehmer} and P.~{Slevin}.
\newblock {Factorisations of distributive laws}.
\newblock {\em ArXiv e-prints}, Sept. 2014.

\bibitem{Lebed}
V.~Lebed.
\newblock {\em Braided objects: unifying algebraic structures and categorifying
  virtual braids}.
\newblock 2012.
\newblock Thesis (Ph.D.)--Universit{\'e} Paris 7.

\bibitem{Lebed2}
V.~Lebed.
\newblock Braided systems: a unified treatment of algebraic structures with
  several operations.
\newblock {\em ArXiv e-prints}, May 2013.

\bibitem{Lebed3}
V.~Lebed.
\newblock Categorical aspects of virtuality and self-distributivity.
\newblock {\em J. Knot Theory Ramifications}, 22(9):1350045, 32, 2013.

\bibitem{Lebed1}
V.~Lebed.
\newblock Homologies of algebraic structures via braidings and quantum
  shuffles.
\newblock {\em J. Algebra}, 391:152--192, 2013.

\bibitem{Lebed2ter}
V.~Lebed.
\newblock R-matrices, {Y}etter-{D}rinfel'd modules and {Y}ang-{B}axter
  equation.
\newblock {\em Axioms}, 2(3):443--476, 2013.

\bibitem{PreTensor}
F.~Li.
\newblock The right braids, quasi-braided pre-tensor categories, and general
  {Y}ang-{B}axter operators.
\newblock {\em Comm. Algebra}, 32(2):397--441, 2004.

\bibitem{BialgBimonads}
M.~Livernet, B.~Mesablishvili, and R.~Wisbauer.
\newblock Generalised bialgebras and entwined monads and comonads.
\newblock {\em J. Pure Appl. Algebra}, 219(8):3263--3278, 2015.

\bibitem{Cyclic}
J.-L. Loday.
\newblock {\em Cyclic homology}, volume 301 of {\em Grundlehren der
  Mathematischen Wissenschaften [Fundamental Principles of Mathematical
  Sciences]}.
\newblock Springer-Verlag, Berlin, 1992.
\newblock Appendix E by Mar{\'{\i}}a O. Ronco.

\bibitem{LoLei}
J.-L. Loday.
\newblock Une version non commutative des alg\`ebres de {L}ie: les alg\`ebres
  de {L}eibniz.
\newblock {\em Enseign. Math. (2)}, 39(3-4):269--293, 1993.

\bibitem{GenBialg}
J.-L. Loday.
\newblock Generalized bialgebras and triples of operads.
\newblock {\em Ast\'erisque}, (320):x+116, 2008.

\bibitem{LoPi}
J.-L. Loday and T.~Pirashvili.
\newblock Universal enveloping algebras of {L}eibniz algebras and (co)homology.
\newblock {\em Math. Ann.}, 296(1):139--158, 1993.

\bibitem{MaiSch}
J.~Maier and C.~Schweigert.
\newblock Modular categories from finite crossed modules.
\newblock {\em J. Pure Appl. Algebra}, 215(9):2196--2208, 2011.

\bibitem{Matveev}
S.~V. Matveev.
\newblock Distributive groupoids in knot theory.
\newblock {\em Mat. Sb. (N.S.)}, 119(161)(1):78--88, 160, 1982.

\bibitem{Bimonads}
B.~Mesablishvili and R.~Wisbauer.
\newblock Bimonads and {H}opf monads on categories.
\newblock {\em J. K-Theory}, 7(2):349--388, 2011.

\bibitem{YBE}
J.~H.~H. {Perk} and H.~{Au-Yang}.
\newblock {Yang-Baxter equations}.
\newblock In {\em {Encyclopedia of Mathematical Physics}}, volume~5, pages
  465--473. Elsevier Science, Oxford, 2006.

\bibitem{Rad2}
D.~E. Radford.
\newblock Solutions to the quantum {Y}ang-{B}axter equation and the
  {D}rinfel$'$d double.
\newblock {\em J. Algebra}, 161(1):20--32, 1993.

\bibitem{Wor}
S.~L. Woronowicz.
\newblock Solutions of the braid equation related to a {H}opf algebra.
\newblock {\em Lett. Math. Phys.}, 23(2):143--145, 1991.

\bibitem{Yetter}
D.~N. Yetter.
\newblock Quantum groups and representations of monoidal categories.
\newblock {\em Math. Proc. Cambridge Philos. Soc.}, 108(2):261--290, 1990.

\end{thebibliography}
\end{document}